\def\@settitle{%
  \vspace*{-20pt}
  \begin{flushleft}%
    \baselineskip14\p@\relax
    \normalfont\bfseries\LARGE
%    \uppercasenonmath\@title
    \@title
  \end{flushleft}%
}
\def\@setauthors{%
  \begingroup
  \def\thanks{\protect\thanks@warning}%
  \trivlist
  %\centering
  \raggedright
  \large \@topsep30\p@\relax
  \advance\@topsep by -\baselineskip
  \item\relax
  \author@andify\authors
  \def\\{\protect\linebreak}%
%  \MakeUppercase{\authors}%
  \authors
  \ifx\@empty\contribs
  \else
    ,\penalty-3 \space \@setcontribs
    \@closetoccontribs
  \fi
  \normalfont
  \@setaddresses
  \endtrivlist
  \endgroup
}
\def\@setaddresses{\par
  \nobreak \begingroup
  \small
  \def\author##1{\nobreak\addvspace\smallskipamount}%
  \def\\{\unskip, \ignorespaces}%
  \interlinepenalty\@M
  \def\address##1##2{\begingroup
    \par\addvspace\bigskipamount\noindent
    \@ifnotempty{##1}{(\ignorespaces##1\unskip) }%
    {\ignorespaces##2}\par\endgroup}%
  \def\curraddr##1##2{\begingroup
    \@ifnotempty{##2}{\nobreak\noindent\curraddrname
      \@ifnotempty{##1}{, \ignorespaces##1\unskip}\/:\space
      ##2\par}\endgroup}%
  \def\email##1##2{\begingroup
    \@ifnotempty{##2}{\nobreak\noindent E-mail address%
      \@ifnotempty{##1}{, \ignorespaces##1\unskip}\/:\space
      \ttfamily##2\par}\endgroup}%
  \def\urladdr##1##2{\begingroup
    \def~{\char`\~}%
    \@ifnotempty{##2}{\nobreak\noindent\urladdrname
      \@ifnotempty{##1}{, \ignorespaces##1\unskip}\/:\space
      \ttfamily##2\par}\endgroup}%
  \addresses
  \endgroup
  \global\let\addresses=\@empty
}
\def\@setabstracta{%
    \ifvoid\abstractbox
  \else
    \skip@25\p@ \advance\skip@-\lastskip
    \advance\skip@-\baselineskip \vskip\skip@
%    \hrule\vskip2pt
    \box\abstractbox
    \prevdepth\z@ % because \abstractbox is a vtop
%    \vskip2pt\hrule
    \vskip-10pt
  \fi
}
\renewenvironment{abstract}{%
  \ifx\maketitle\relax
    \ClassWarning{\@classname}{Abstract should precede
      \protect\maketitle\space in AMS document classes; reported}%
  \fi
  \global\setbox\abstractbox=\vtop \bgroup
    \normalfont\small
    \list{}{\labelwidth\z@
      \leftmargin0pc \rightmargin\leftmargin
      \listparindent\normalparindent \itemindent\z@
      \parsep\z@ \@plus\p@
      
    }%
    \item[\hskip\labelsep\bfseries\abstractname.]%
}{%
  \endlist\egroup
  \ifx\@setabstract\relax \@setabstracta \fi
}
\def\section{\@startsection{section}{1}%
  \z@{-1.5\linespacing\@plus-.2\linespacing}{.8\linespacing}%
  {\normalfont\bfseries\Large}}
\def\subsection{\@startsection{subsection}{2}%
  \z@{-.8\linespacing\@plus-.3\linespacing}{.3\linespacing\@plus.2\linespacing}%
  {\normalsize\bfseries}}
\def\subsubsection{\@startsection{subsection}{3}%
  \z@{.7\linespacing\@plus.2\linespacing}{-1.5ex}%
  {\normalfont\itshape}}
\def\@secnumfont{\bfseries}
\def\to{\mathchoice{\longrightarrow}{\rightarrow}{\rightarrow}{\rightarrow}}
\newcommand{\shortxra}[2][]{\ext@arrow 0359\rightarrowfill@{#1}{#2}}
\def\longrightarrowfill@{\arrowfill@\relbar\relbar\longrightarrow}
\newcommand{\longxra}[2][]{\ext@arrow 0359\longrightarrowfill@{#1}{#2}}
\renewcommand{\xrightarrow}[2][]{\mathchoice{\longxra[#1]{#2}}%
  {\shortxra[#1]{#2}}{\shortxra[#1]{#2}}{\shortxra[#1]{#2}}}
\def\Nopagebreak{\@nobreaktrue\nopagebreak}
\numberwithin{equation}{section}
\theoremstyle{plain}
\newtheorem{theorem}{Theorem}[section]
\newtheorem{proposition}[theorem]{Proposition}
\newtheorem{lemma}[theorem]{Lemma}
\newtheorem*{claim}{Claim}
\theoremstyle{definition}
\newtheorem{definition}[theorem]{Definition}
\newtheorem{question}[theorem]{Question}
\newtheorem{remark}[theorem]{Remark}
\numberwithin{equation}{section}
\newtheoremstyle{TheoremNum}
        {}{}              %%% space between body and thm
        {\itshape}                      %%% Thm body font
        {}                              %%% Indent amount (empty = no indent)
        {\bfseries}                     %%% Thm head font
        {.}                             %%% Punctuation after thm head
        { }                             %%% Space after thm head
        {\thmname{#1}\thmnote{ \bfseries #3}}%%% Thm head spec
\theoremstyle{TheoremNum}
\newtheorem{thmn}{Theorem}
\def\Z{\mathbb{Z}}
\def\Q{\mathbb{Q}}
\def\R{\mathbb{R}}
\def\cP{\mathcal{P}}
\def\Im{\operatorname{Im}}
\def\Tor{\operatorname{Tor}}
\def\Ext{\operatorname{Ext}}
\def\Arf{\operatorname{Arf}}
\def\rhot{\rho^{(2)}}
\def\wt{\widetilde}
\def\setminus{\smallsetminus}
\def\Bor{\text{Bor}}
\def\mathbinover#1#2{\mathbin{\mathop{#1}\limits_{#2}}}
\def\cupover#1{\mathbinover{\cup}{#1}}
\def\SL{\operatorname{SL}}
\newcommand{\eps}{\varepsilon}
\let\oldsharp=\# \def\#{\mathbin{\oldsharp}}
\def\toiso{\xrightarrow{\simeq}}
\DeclareMathOperator{\Id}{Id}
\def\fr{\mathrm{fr}}
\DeclareMathOperator{\pr}{pr}
\DeclareMathOperator{\qr}{qr}
\DeclareMathOperator{\cl}{cl}
\def\emptystr{}
\newcommand{\mkc}[2][]{\begin{color}{red}#2%
  \def\tempstr{#1}%
  \ifx\tempstr\emptystr \else\textsf{\SMALL\ \raise.7ex\hbox{[\tempstr]}}\fi
\end{color}}
 \def\ol{\overline}
 \def\ba{\begin{array}}
 \def\ea{\end{array}}
 \def\sm{\setminus}
\begin{document}

\title
[Non-concordant links with homology cobordant zero surgeries]
{Non-concordant links with homology cobordant zero framed surgery manifolds}

\author{Jae Choon Cha}
\address{
  Department of Mathematics\\
  POSTECH\\
  Pohang 790--784\\
  Republic of Korea\\
  and\linebreak
  School of Mathematics\\
  Korea Institute for Advanced Study \\
  Seoul 130--722\\
  Republic of Korea
}
\email{jccha@postech.ac.kr}

\author{Mark Powell}
\address{
  Department of Mathematics\\
  Indiana University \\
  Bloomington, IN 47405\\
  USA
}
\email{macp@indiana.edu}

\begin{abstract}

  We use topological surgery theory to give sufficient conditions for
  the zero framed surgery manifold of a 3-component link to be
  homology cobordant to the zero framed surgery on the Borromean rings
  (also known as the 3-torus), via a topological homology cobordism
  preserving the free homotopy classes of the meridians.

  This enables us to give examples of 3-component links with
  unknotted components and vanishing pairwise linking numbers, such
  that any two of these links have homology cobordant zero surgeries
  in the above sense, but the zero surgery manifolds are not
  homeomorphic.  Moreover the links are not concordant to one another,
  and in fact they can be chosen to be height $h$ but not height $h+1$
  symmetric grope concordant, for each $h$ which is at least three.

\end{abstract}

\def\subjclassname{\textup{2010} Mathematics Subject Classification}
\expandafter\let\csname subjclassname@1991\endcsname=\subjclassname
\expandafter\let\csname subjclassname@2000\endcsname=\subjclassname
\subjclass{%
  57M25, % Knots and links in $S^3$
%  57M27, % Invariants of knots and 3-manifolds
  57N70%, % Cobordism and concordance (in low dimension)
%  57Q60; % Cobordism and concordance (in high dimension)
%  57M07, % Topological methods in group theory
}

\maketitle

\section{Introduction}

It is well known that the study of homology cobordism of 3-manifolds
is essential for understanding the concordance of knots and links: homology
cobordism of the \emph{exteriors} of links in $S^3$ is equivalent to
concordance in a homology $S^3\times I$, and an additional mild normal
generation condition for $\pi_1$ is equivalent to topological
concordance in $S^3\times I$ (this also holds modulo the 4-dimensional Poincar\'e
conjecture in the smooth case).

We recall the definitions: two $m$-component links $L_0$ and $L_1$ in
$S^3$ are said to be \emph{topologically} (respectively
\emph{smoothly}) \emph{concordant} if there exist $m$ locally flat
(respectively smoothly embedded) disjoint annuli in $S^3\times [0,1]$
cobounded by components of $L_0\times \{0\}$ and $-L_1\times \{1\}$.
Two 3-manifolds $M_0$ and $M_1$ bordered by a 2-manifold $\Sigma$,
that is endowed with a marking $\mu_i\colon
\Sigma\xrightarrow{\simeq} \partial M_i$, are \emph{topologically}
(respectively \emph{smoothly}) \emph{homology cobordant} if there is a
topological (respectively smooth) 4-manifold $W$ with $\partial W =
M_0 \sqcup -M_1 \sqcup \Sigma \times [0,1] / (\mu_0(x)\sim x \times
\{0\},\mu_1(x) \sim x \times \{1\}$, $x\in \Sigma)$, such that the
inclusions $M_i\to W$ ($i=0,1$) induce isomorphisms on integral
homology groups.  In this paper links are oriented, and link exteriors
are always bordered by $\bigsqcup_m S^1\times S^1$ under the
zero framing.

In high dimensions, concordance classification results were obtained
by studying homology surgery, with the aim of surgeries being to
produce a homology cobordism of the exteriors (for example,
see~\cite{Cappell-Shaneson:1974-1,Cappell-Shaneson:1980-1,LeDimet:1988-1}).
On the other hand, for knots and links in dimension three, the
\emph{zero surgery manifolds} and their 4-dimensional homology
cobordisms have been extensively used in the literature in order to
understand the structure peculiar to low dimensions, especially in the
topological category.  Recall that performing zero framed surgery on a
link in $S^3$ yields a closed $3$-manifold, called the zero surgery
manifold.

The classical invariants such as the knot signature and
Levine's algebraic knot concordance
class~\cite{Levine:1969-2, Levine:1969-1} are obtained from the zero surgery manifold of a knot, via the Blanchfield form.
Also higher order knot concordance obstructions, such as Casson-Gordon
invariants~\cite{Casson-Gordon:1978-1, Casson-Gordon:1986-1}, and
Cochran-Orr-Teichner
$L^2$-signatures~\cite{Cochran-Orr-Teichner:1999-1} are obtained from
the zero surgery manifold (often together with the homology class of
the meridian).

A natural interesting question is whether the homology cobordism class
of a zero surgery manifold determines the concordance class of a knot
or link or if it determines the homology cobordism class of the
exterior.

In this paper we show, in a strong sense involving homotopy of
meridians, that the answer is negative for a large class of
\emph{links} satisfying a certain nonvanishing condition on Milnor's
$\overline{\mu}$-invariants, even in the framework of symmetric grope
and Whitney tower generalisations of concordance and homology
cobordism in the sense of
\cite{Cochran-Orr-Teichner:1999-1,Cha:2012-1}.  Also we employ
topological surgery in dimension 4 to give a new construction of
homology cobordisms of zero surgery manifolds.  Next we state our main
theorems, after which we will discuss these aspects further.

\begin{theorem}
  \label{theorem:links-not-conc-hom-cobordant-0-surgeries}
  Suppose $h\ge 3$.  Then there are infinitely many 3-component links
  $L_0$, $L_1$, \ldots\ with vanishing pairwise linking numbers and with
  unknotted components, satisfying the following for any $i\ne j$.
  \begin{enumerate}
  \item\label{item:thm-links-not-conc-hom-cob-I-non-diffeo} The zero
    surgery manifolds $M_{L_i}$ and $M_{L_j}$ are not homeomorphic.
  \item\label{item:thm-links-not-conc-hom-cob-II-top-hom-cobordant}
    There is a topological homology cobordism between $M_{L_i}$ and $M_{L_j}$
    in which the $k$th meridians of $L_i$ and $L_j$ are freely homotopic for
    each~$k=1,2,3$.
  \item\label{item:thm-links-not-conc-hom-cob-III-grope-concordance}
    The links $L_i$ and $L_j$ are height $h$ but not height $h+1$
    symmetric grope concordant.  In particular $L_i$ and $L_j$ are not
    concordant.
  \end{enumerate}
\end{theorem}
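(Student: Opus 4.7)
The plan is to construct the links $L_i$ by infection on a fixed base link $J$, which I would take to be either the Borromean rings or a close relative thereof: a $3$-component link with unknotted components, vanishing pairwise linking numbers, and non-vanishing length-three Milnor invariant $\bar\mu_{123}(J) \ne 0$. For each $i$ I would form $L_i$ by infecting $J$ along a framed curve $\alpha \subset S^3 \sm J$ with an infection knot $K_i$ chosen from an infinite family. The curve $\alpha$ should lie in a sufficiently deep term of the derived series of $\pi_1(S^3\sm J)$ so that $L_i$ automatically inherits unknotted components, vanishing pairwise linking numbers, and the same value of $\bar\mu_{123}$. The infection knots $\{K_i\}$ are chosen from an infinite family of $(h)$-solvable knots (in the sense of Cochran--Orr--Teichner), built for instance by iterated Bing doubling or iterated satelliting so that they share the same solvability depth $h$ but have pairwise distinct $L^2$-signature integrals of their classical Seifert forms.

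For statement (\ref{item:thm-links-not-conc-hom-cob-II-top-hom-cobordant}) I would apply the main surgery-theoretic result promised in the abstract. The point is that $\alpha$ is null-homotopic in $S^3 \sm J$ and the difference knot $K_i \# -K_j$ is topologically slice in a homology $4$-ball, so one can assemble from these ingredients a topological homology cobordism $W$ from $M_{L_i}$ to $M_{L_j}$. Taking $J$ to be the Borromean rings keeps $\pi_1(M_J)\cong \Z^3$ abelian, which is exactly what enables $4$-dimensional topological surgery to be used. Tracking the meridians through this surgery then realises the meridian-preservation condition.

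For statement (\ref{item:thm-links-not-conc-hom-cob-III-grope-concordance}), the height-$h$ symmetric grope concordance is visible from the construction: replacing the infection knot $K_i$ by $K_j$, both of which are $(h)$-solvable, produces a symmetric height-$h$ grope cobordism between $L_i$ and $L_j$. The obstruction to height-$(h+1)$ concordance is the $L^2$-rho invariant machinery of Cochran--Orr--Teichner refined by Cha: a height-$(h+1)$ symmetric grope concordance would force $\rhot(M_{L_i} \sqcup -M_{L_j}; \phi) = 0$ for a specific $(h+1)$-st-derived unitary representation $\phi$, while the infection formula expresses this $\rho$-invariant as a non-zero integral of Levine--Tristram signatures of the $K_i$'s, which can be arranged non-zero by careful choice of the family $\{K_i\}$. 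Statement (\ref{item:thm-links-not-conc-hom-cob-I-non-diffeo}) is handled by a separate, lower-order invariant of the $3$-manifold $M_{L_i}$ itself (for instance a Casson--Gordon-type $\rho$-invariant computed from a metabelian representation of $\pi_1(M_{L_i})$), which is sensitive to the homeomorphism type rather than merely to the homology cobordism class.

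The main obstacle, and what I expect to be the technical heart of the paper, is item (\ref{item:thm-links-not-conc-hom-cob-II-top-hom-cobordant}). Producing \emph{some} homology cobordism from $M_{L_i}$ to $M_{L_j}$ by slicing $K_i\#-K_j$ in a homology ball is relatively standard, but the additional requirement that this cobordism carry each meridian of $L_i$ to the corresponding meridian of $L_j$ up to free homotopy is considerably more delicate, because after zero-framed surgery the meridians are no longer distinguished by the ambient $3$-manifold. This is where the novel ingredient, topological surgery in dimension $4$ used to realise the cobordism explicitly rather than via a slice-disc exterior, is essential, and where the non-vanishing of $\bar\mu_{123}(J)$ plays a role by keeping the relevant fundamental groups under control.
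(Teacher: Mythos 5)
Your overall skeleton (satellite/infection on the Borromean rings along a curve in the kernel of $\pi_1(S^3\sm L_0)\to\pi_1(M_{L_0})$, topological surgery over the good group $\Z^3$ for the homology cobordism, and $L^2$-signature obstructions for non-concordance) matches the paper, but three of your key mechanisms have genuine gaps. First, your route to item (2) is internally inconsistent with your route to item (3): you propose to build the homology cobordism from $M_{L_i}$ to $M_{L_j}$ by slicing the difference knot $K_i\#-K_j$ in a homology ball, while simultaneously requiring the $K_i$ to have pairwise distinct integrals of Levine--Tristram signatures. That integral is exactly $\rhot$ of zero surgery with respect to the abelianisation and vanishes for knots slice in a homology ball, so sliceness of $K_i\#-K_j$ would force the integrals to agree; moreover, if the infection knots were concordant the infected links would be concordant, killing (3). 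The paper never compares $L_i$ and $L_j$ through a difference knot: it proves Theorem~\ref{Theorem:borromean_homology_equiv} by a framed bordism computation, normal cobordism, and killing the obstruction in $L_4(\Z[\Z^3])$ (adding $E_8$'s and reframed tori), then verifies its hypothesis for each $L_j$ --- the satellite curve lies in the commutator subgroup, so $M_{L_j}\to T^3$ is a $\Lambda$-homology equivalence, and the components are unknotted so the Arf invariants vanish --- and finally stacks the two cobordisms to $T^3$.

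Second, for item (1) a ``Casson--Gordon-type $\rho$-invariant of a metabelian representation'' is not a workable plan as stated: by item (2) the manifolds are homology cobordant with meridians preserved up to homotopy, so homology-cobordism-flavoured signature invariants are precisely the wrong tool, and you give no argument controlling how representations match under a putative homeomorphism. The paper's argument is geometric: the splice torus $\partial X_{J^j_{n-1}}$ is incompressible in $M_L\sm\nu(\eta)$ (checked with an explicit $\SL(2,\Z_5)$ representation), the seed knot $J_0^j$ is a connected sum of $N_j$ nontrivial knots, so $M_{L_j}$ contains at least $N_j$ disjoint non-parallel incompressible tori, and Kneser--Haken finiteness with $N_j$ chosen above the bound for the earlier manifolds rules out homeomorphisms. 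Third, your positive direction in (3) misidentifies the mechanism: $h$-solvability of the infection knots does not yield height $h$ grope concordance of the infected links. In the paper the height comes from the heights of the satellite configurations (the curve $\eta$ bounds a height-one capped grope in the exterior, iterated with height-one Stevedore configurations on slice knots, via \cite[Proposition~4.7]{Cha:2012-1}), and the failure at height $h+1$ requires both showing the infection curve survives into the relevant quotient of $\pi_1$ of the solvable cobordism (Dwyer's theorem plus nonvanishing of $\ol\mu$) and choosing the $N_j$ large relative to universal Cheeger--Gromov bounds, because the representation appearing in the Amenable Signature Theorem is not explicitly computable; your sketch omits both of these quantitative and group-theoretic steps.
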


For a definition of height $h$ symmetric grope concordance, see
Definition~\ref{Definition:grope-concordance}.  Our links are obtained
from the Borromean rings, which will be our $L_0$ for all $h$, by
performing a satellite construction along a curve lying in the kernel
of the inclusion induced map $\pi_1(S^3\sm {L_0})\to \pi_1(M_{L_0})$.

As a counterpoint to
Theorem~\ref{theorem:links-not-conc-hom-cobordant-0-surgeries}, we
show that there are infinite families of links with the same
non-vanishing Milnor invariants with homeomorphic zero surgery
manifolds preserving the homotopy classes of the meridians, but which
are not concordant.

The Milnor invariant of an $m$-component link associated to a
multi-index $I=i_1i_2\cdots i_r$ with $i_j\in \{1,\ldots,m\}$, as
defined in~\cite{Milnor:1957-1}, will be denoted by
$\overline{\mu}_L(I)$.  We denote its length by $|I|:=r$.  Define
$k(m) := \lfloor \log_2 (m-1)\rfloor$.

\begin{theorem}\label{theorem:links-diffeo-zero-surgeries-not-concordant}
  Let $I$ be a multi-index with non-repeating indices with length~$m
  \ge 2$.  For any $h \geq k(m)+2$ there are infinitely many
  $m$-component links $L_0,L_1,...$ with unknotted components,
  satisfying the following.
  \begin{enumerate}
  \item\label{item-thm-diff-zero-surg-one} The $L_i$ have identical
    $\ol\mu$-invariants: $\ol{\mu}_{L_i}(J) = \ol{\mu}_{L_j}(J)$
    for all~$J$.  In addition $\ol{\mu}_{L_i}(I) = 1$, and
    $\ol{\mu}_{L_i}(J) = 0$ for $|J|<|I|$.
  \item\label{item-thm-diff-zero-surg-two} There is a homeomorphism
    between the zero surgery manifolds $M_{L_i}$ and $M_{L_j}$ which
    preserves the homotopy classes of the meridians.
  \item\label{item-thm-diff-zero-surg-three} The links $L_i$ and $L_j$
    are height $h$ but not height $h+1$ symmetric grope concordant.
    In particular $L_i$ and $L_j$ are not concordant.
  \end{enumerate}
\end{theorem}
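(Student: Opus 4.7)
The plan is to refine the construction of Theorem~\ref{theorem:links-not-conc-hom-cobordant-0-surgeries}, upgrading the conclusion from homology cobordism of zero surgeries to a meridian-preserving \emph{homeomorphism}, while also arranging for the links to realise the prescribed nonzero Milnor invariant $\ol\mu(I)=1$. First I would choose a base link $L_0$, with unknotted components, satisfying $\ol\mu_{L_0}(I)=1$ and $\ol\mu_{L_0}(J)=0$ for $|J|<|I|$. A natural candidate is an iterated Bing double of the Hopf link, with Bing-doubling pattern dictated by $I$; this is where the bound $h\ge k(m)+2$ enters, since a Milnor invariant of length $m$ can be realised from a tree of Bing doublings of depth $\lfloor\log_2(m-1)\rfloor$, and each level contributes to the derived-series depth of the curves available for the infections performed in the next step.

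Next I would produce $L_i$ for $i\ge 1$ by genetic infection of $L_0$ along a curve $\eta\subset S^3\setminus L_0$ using an auxiliary knot $K_i$. The curve $\eta$ must satisfy two competing properties: it must lie deep enough in the derived series of $\pi_1(S^3\setminus L_0)$ to guarantee that all Milnor invariants of $L_0$ are preserved under infection, so that $\ol\mu_{L_i}(J)=\ol\mu_{L_0}(J)$ for every $J$; and the infection must produce a zero surgery manifold literally homeomorphic to $M_{L_0}$ by a homeomorphism matching the free homotopy classes of meridians. For the latter I would seek $\eta$ bounding an embedded disc in $M_{L_0}$ whose framing inherited from $S^3$ matches the disc framing in $M_{L_0}$, so that the torus-sum presentation of the infected zero surgery collapses back to a copy of $M_{L_0}$ preserving the meridian data. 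The homology-cobordism analogue of this statement is what Theorem~\ref{theorem:links-not-conc-hom-cobordant-0-surgeries} provides for such a setup, so the additional input needed here is a $3$-dimensional rigidification.

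The non-concordance conclusions follow from Cha's $L^2$-signature obstruction to symmetric grope concordance~\cite{Cha:2012-1}. Because $\eta$ sits at the correct depth in the derived series, the obstruction at that depth evaluates on the difference $\int\sigma_{K_i}^{(2)}-\int\sigma_{K_j}^{(2)}$ of the $L^2$-signature integrals of the infecting knots; choosing the $K_i$ so that these integrals realise infinitely many distinct values modulo the relevant subgroup of $\R$ rules out height $h+1$ symmetric grope concordance between $L_i$ and $L_j$. The positive statement that $L_i$ and $L_j$ \emph{are} height $h$ symmetric grope concordant would be demonstrated by an explicit construction of a grope of height $h$, built from the Bing-doubled architecture of $L_0$ together with the depth of $\eta$. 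I expect the main obstacle to be the rigid identification of the zero surgery manifolds: finding a single choice of $\eta$ that simultaneously respects the derived-series-depth constraint and induces a $3$-manifold-level cancellation producing a literal homeomorphism of zero surgeries (not merely a homology cobordism) is the technical core of the argument, and it is this strengthening that distinguishes the theorem from Theorem~\ref{theorem:links-not-conc-hom-cobordant-0-surgeries}.
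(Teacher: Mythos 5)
Your overall architecture is the paper's: realise $\ol\mu(I)=\pm1$ by an iterated Bing double built from a minimal-depth tree (Section~\ref{subsection:links-with-given-mu-invariant}), infect along a curve $\eta$ chosen so that the zero surgery manifold is unchanged, and use the amenable/$L^2$-signature machinery of \cite{Cha:2012-1} for the negative direction. But as written the proposal leaves its technical core unresolved and misstates the two places where the real work happens. First, you never exhibit $\eta$: you only say you would ``seek'' a curve bounding a correctly framed disc in $M_{L_0}$. That criterion is in fact a workable sufficient condition, but the theorem needs an actual curve satisfying it \emph{together with} the other requirements, and the paper supplies one concretely: $\eta$ is the zero framed longitude of the component that is never Bing doubled, the homeomorphism of zero surgeries being obtained by sliding the infected strands over the zero framed $2$-handle on the parallel component (Lemma~\ref{lemma:properties-of-L_I-and-eta}~(\ref{item:properties-of-L_I-eta-three})). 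Moreover your stated reason for wanting $\eta$ deep in the derived series --- preservation of Milnor invariants --- is not the relevant point: $\ol\mu$-invariants are preserved under \emph{any} such infection by the Stallings argument (Lemma~\ref{lemma:milnor-invariants-unchanged-satellite}). What the depth of $\eta$ must actually deliver is (i) a capped grope of height $k(m)$ in the exterior, so that the iterated construction yields height $h$ grope concordance, and (ii) nontriviality of $\eta$ in $\pi_1(X_{L_0})/\pi_1(X_{L_0})_m$, which is exactly where $\ol\mu_{L_0}(I)=\pm1$ is used.

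Second, the negative direction has a genuine gap. You assert that ``because $\eta$ sits at the correct depth, the obstruction evaluates on $\int\sigma^{(2)}_{K_i}-\int\sigma^{(2)}_{K_j}$,'' but mere membership at the right depth proves nothing: if $\eta$ became trivial in the relevant quotient of $\pi_1(W)$ of a putative $(h-1)$-solvable cobordism $W$, the corresponding $\rhot$-terms would vanish and no contradiction would result. The essential step is a survival argument showing that $\eta$ (hence the meridian of the bottom infection knot) maps to a nontrivial element of a torsion-free abelian subquotient of $\pi_1(W)$ modulo its rational derived series; in the paper this is done via Dwyer's theorem (Lemma~\ref{lemma:dwyer-for-solvable-cobordism}), Milnor's theorem that $\pi_1(X)/\pi_1(X)_m\cong F/F_m$, and the nonvanishing of $\ol\mu(I)$, before Theorem~\ref{theorem:amenable-signature-for-solvable-cobordism} can be applied. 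In addition, the leftover $\rho$-terms are not handled by working ``modulo a subgroup of $\R$'' but by the Cheeger--Gromov universal bound, which is why the bottom knots are taken to be connected sums of $N_j$ copies of the Cochran--Teichner knot with $N_j$ growing; and to realise every $h\ge k(m)+2$ (not just $h=k(m)+2$) one must infect not by a plain knot but by the iterated satellite knots $J^j_{n-k}$ built from height-one (Stevedore) configurations, so that Lemma~\ref{lemma:grope-concordance-for-iterated-satellite-construction} gives height $h$ grope concordance while the signatures detected at depth $h-1$ are still those of $J^j_0$. None of these three ingredients appears in your sketch, so the proof is incomplete precisely at the steps that distinguish this theorem from a formal reduction.
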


The case of $m\ge 3$ should be compared with
Theorem~\ref{theorem:links-not-conc-hom-cobordant-0-surgeries} since
then the links $L_i$ have vanishing pairwise linking numbers.  To
construct such links we start with certain iterated Bing doubles
constructed using T.~Cochran's algorithm which realise the Milnor
invariant required.  We then perform satellite operations which affect
the concordance class of the link but do not change the homeomorphism
type of the zero surgery manifold.

We remark that we could
also phrase Theorems
\ref{theorem:links-not-conc-hom-cobordant-0-surgeries} and
\ref{theorem:links-diffeo-zero-surgeries-not-concordant} in terms of
symmetric Whitney tower concordance instead of grope concordance.

In the three subsections below, we discuss some features of
Theorem~\ref{theorem:links-not-conc-hom-cobordant-0-surgeries},
regarding: (i)~the use
of topological surgery in dimension~4, (ii)~link concordance versus zero surgery homology cobordism, and
(iii)~link exteriors and the homology surgery approach.

\subsection{Topological surgery for 4-dimensional homology cobordism}

An interesting aspect of the proof of
Theorem~\ref{theorem:links-not-conc-hom-cobordant-0-surgeries} is that
we employ topological surgery in dimension 4 to give a sufficient
condition for certain zero surgery manifolds of 3-component links to
be homology cobordant.  It is well known that topological surgery in
dimension 4 is useful for obtaining homology cobordisms (and
consequently concordances), although the current state of the art in
terms of ``good'' groups, for which the $\pi_1$-null disc lemma is
known, is still insufficient for the general case.  M.~Freedman and
F.~Quinn showed that knots of Alexander polynomial one are concordant
to the unknot~\cite[Theorem~11.7B]{Freedman-Quinn:1990-1}.  J.~Davis
extended the program to show that two component links with Alexander
polynomial one are concordant to the Hopf link~\cite{Davis:2006-1}.
The above two cases use topological surgery over fundamental groups
$\Z$ and $\Z^2$ respectively.  Due to the rarity of good groups for
4-dimensional topological surgery, there are not many other situations
where such positive results on knot and link concordance can currently
be proven.  As another case, S.~Friedl and P.~Teichner in
\cite{Friedl-Teichner:2005-1} found sufficient conditions for a knot
to be homotopy ribbon, and in particular slice, with a certain ribbon
group $\Z \ltimes \Z[1/2]$.

We give another instance of the utility of topological surgery for
constructing homology cobordisms, using the group $\Z^3$, which is
manageable from the point of view of topological surgery in
dimension~4.  Indeed, our sufficient condition for zero surgery
manifolds to be homology cobordant focuses on the Borromean rings as a
base link.  The zero surgery manifold $M_{\Bor}$ of the Borromean
rings is the 3-torus $T^3 = S^1\times S^1\times S^1$, whose
fundamental group is~$\Z^3$.

To state our result, we use the following notation: let $\Lambda :=
\Z[\Z^3]= \Z[t_1^{\pm 1},t_2^{\pm 1},t_3^{\pm 1}]$.  Denote the zero
surgery manifold of a link $L$ by $M_L$ as before.  For a
$3$-component link $L$ with vanishing pairwise linking numbers, there is a canonical homotopy class of maps
$f_L\colon M_L\to M_{\Bor}=T^3$ which send the homotopy class of the
$i$th meridian of $L$ to that of the Borromean rings, namely the $i$th
circle factor of $T^3$.  After choosing an identification of
$\pi_1(T^3) = \Z^3$, we can use this to define the
$\Lambda$-coefficient homology $H_1(M_L;\Lambda)$.  We say that a map
$f \colon M_L\to T^3$ is a \emph{$\Lambda$-homology equivalence} if
$f$ is homotopic to $f_L$ and $f$ induces isomorphisms on
$H_*(-;\Lambda)$.

\begin{theorem}\label{Theorem:borromean_homology_equiv}
  Suppose $L$ is a $3$-component link whose components have trivial
  Arf invariants and there exists a $\Lambda$-homology equivalence
  $M_L \to T^3$.  Then there is a homology cobordism $W$ between $M_L$
  and $T^3=M_{\Bor}$ for which the inclusion induced maps $\pi_1(M_L)\to
  \pi_1(W) \xleftarrow{\simeq} \pi_1(T^3)$ are such that the composition
  from left to right takes meridians to meridians.
\end{theorem}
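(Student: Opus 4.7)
The plan is to realise the given $\Lambda$-homology equivalence $f\colon M_L\to T^3$ as the boundary restriction of a degree one normal map of $4$-manifolds, and then apply topological surgery over the good group $\Z^3$ to convert this into the desired homology cobordism. Since $f$ is in particular a degree one $\Z$-homology equivalence, the bordism classes of $(M_L,f)$ and $(T^3,\id)$ in topological bordism over $T^3$ coincide (in this low dimension the class is detected by the image of the fundamental class in $H_3(T^3;\Z)$). Any such bordism produces a compact oriented topological $4$-manifold $V$ with $\partial V=M_L\sqcup-T^3$ together with a map $F\colon V\to T^3\times I$ restricting to $f$ and the identity on the two boundary components. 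Since $M_L$ and $T^3$ are stably parallelisable, $F$ may be upgraded to a degree one normal map rel boundary, and standard preliminary interior surgeries arrange that $F_*\colon\pi_1(V)\to\Z^3$ is an isomorphism without changing the boundary.

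We are then reduced to the middle-dimensional surgery problem of killing the surgery kernel $K_2(V;\Lambda)\subseteq H_2(V;\Lambda)$, equipped with its equivariant intersection and self-intersection forms over $\Lambda$. Since $\Z^3$ is a good group in the sense of \cite{Freedman-Quinn:1990-1}, the disk embedding theorem applies and $\pi_1$-null immersions of $2$-spheres with vanishing equivariant self-intersection may be replaced by locally flat embeddings suitable for surgery. The global obstruction lies in $L_4^h(\Z[\Z^3])\cong \Z\oplus(\Z/2)^3$ by the Shaneson-Wall formula (using $\Wh(\Z^3)=0$): the $\Z$-summand is the signature, which vanishes because both boundary manifolds contribute zero signature, while the three $\Z/2$-summands are Arf-type invariants corresponding to the three $S^1$-factors of $T^3$. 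Via the meridional structure of $F$, the $i$th $\Z/2$ obstruction is identified with the classical Arf invariant of the $i$th component of $L$. By hypothesis these all vanish, so the surgery obstruction is zero and the surgeries can be performed to yield a $\Z$-homology cobordism $W$. The meridian property is automatic because the inclusion-induced map $\pi_1(M_L)\to\pi_1(W)\cong\pi_1(T^3)$ factors through $F$, which by construction sends the $i$th meridian of $L$ to the $i$th circle factor of $T^3$, namely the $i$th meridian of the Borromean rings.

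The principal obstacle is the geometric identification of the three $\Z/2$-Arf summands of $L_4^h(\Z[\Z^3])$ with the classical Arf invariants of the components of $L$. This is not an algebraic formality: one must trace how each component of $L$, viewed through the zero framed surgery $M_L$ and the chosen null-bordism $V$, gives rise to a characteristic surface whose $\Z/2$ quadratic refinement has the desired Arf invariant. Once this matching is in place, the topological disk embedding theorem for the good group $\Z^3$ completes the construction of $W$, and the meridian-preserving property is built into the map $F$ from the very start.
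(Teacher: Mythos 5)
Your overall architecture (a degree one normal map over $T^3\times I$, surgery below the middle dimension, the Shaneson computation of $L_4(\Z[\Z^3])$, goodness of $\Z^3$, and the meridian diagram chase) matches the paper, but there is a genuine gap: the passage from an oriented bordism to a degree one \emph{normal} bordism rel boundary is not automatic, and this is precisely where the real work lies and where the hypothesis on the Arf invariants of the components must be used. What is needed is that, for a suitable choice of stable framing on $M_L$ and after a homotopy of $f$, the classes of $(M_L,f)$ and $(T^3,\Id)$ agree in $\Omega_3^{\fr}(T^3)\cong\Z_{24}\oplus\bigoplus_3\Z_2\oplus\bigoplus_3\Z_2\oplus\Z$. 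Changing the framing can adjust the $\Z_{24}$ summand (via surjectivity of the $J$-homomorphism) and the three $\Omega_1^{\fr}$ summands (framings of the meridians), but the three $\Omega_2^{\fr}\cong\Z_2$ summands are framing-independent: after a homotopy, $(\pr_i\circ f)^{-1}(\ast)$ is a capped-off Seifert surface for the $i$th component, and its class in $\Omega_2^{\fr}$ \emph{is} the Arf invariant of that component. This is the content of Lemma~\ref{Lemma:existence-of-normal-cobordism}; your appeal to stable parallelisability does not address it, and if the Arf invariants were nonzero the normal bordism you start from need not exist at all.

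Your handling of the surgery obstruction in $L_4(\Z[\Z^3])\cong L_0(\Z)\oplus\bigoplus_3 L_2(\Z)$ is also off on both kinds of summand. The signature summand does not vanish ``because both boundary manifolds contribute zero signature''---a normal cobordism with the given boundary can have arbitrary signature, and the paper kills this summand by connect-summing with topological $E_8$ manifolds (capping the Poincar\'e sphere with Freedman's contractible 4-manifold). The three $\Z_2$ summands are \emph{not} identified with the classical Arf invariants of the components: in the paper they are killed with no hypothesis whatsoever, by Davis's re-framing trick of replacing a neighbourhood $\Sigma_i\times D^2$ of a surface dual to $S_i$ by $\Sigma_i\times\cl(S^1\times S^1\setminus D^2)$ with the Lie framing, which changes the corresponding Arf-type obstruction by one. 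So the identification you flag as the ``principal obstacle'' is not only unproved but is not where the hypothesis enters: the component Arf invariants obstruct the existence of the normal cobordism, not the $L$-theory class, and conversely the $L$-theory class can be normalised to zero by modifying the normal data rather than by any assumption on $L$.
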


\subsection{Link concordance versus zero surgery homology cobordism}

We review the general question of whether links with
homology cobordant zero surgery manifolds are concordant.  The answer
to the basic question is easily seen to be no, once one knows of a
result of C. Livingston that there are knots not concordant to their
reverses~\cite{Livingston:1983-1}.  Note that a knot and its reverse
have the same zero surgery manifold.  This leads us to consider some
additional conditions on the homology cobordism, involving the
meridians.  In what follows meridians are always positively oriented.

First, observe the following: the exteriors of two links are homology
cobordant if and only if the zero framed meridians cobound framed
annuli disjointly embedded in a homology cobordism of the zero surgery
manifolds.  (For the if direction, note that the exterior of the
framed annuli is a homology cobordism of the link exteriors.)  In
particular it holds if two links (or knots) are concordant.

Regarding the knot case,
in~\cite{Cochran-Franklin-Hedden-Horn:2011-1}, T.~Cochran,
B.~Franklin, M.~Hedden and P.~Horn considered homology cobordisms of
zero surgery manifolds in which the meridians are \emph{homologous}:
in the smooth category, they showed that the existence of such a
homology cobordism is insufficient for knots to be concordant.  In the
topological case this is still left unknown.

Concerning a stronger \emph{homotopy} analogue, the following is
unknown in both the smooth and topological cases:

\begin{question}
  If there is a homology cobordism of zero surgery manifolds of two
  knots in which the meridians are homotopic, are the knots concordant? Or
  concordant in a homology $S^3\times I$?
\end{question}

For the link case, results in the literature give non-concordant
examples whose zero surgery manifolds admit a homology cobordism with
homotopic meridians.  As a generic example in the topological
category, consider a 2-component link with linking number one.  The
zero surgery manifold is a homology 3-sphere, which bounds a
contractible topological 4-manifold
by~\cite[Corollary~9.3C]{Freedman-Quinn:1990-1}. Taking the connected
sum of such 4-manifolds, one obtains the following: \emph{the zero
  surgery manifolds of any two linking number one 2-component links
  cobound a simply connected topological homology cobordism.}  Note
that in this case the meridians are automatically homotopic.  There
are many linking number one 2-component links which are not
concordant, as can be detected, for example, by the multivariable
Alexander polynomial~\cite{Kawauchi:1978-1, Nakagawa-1978}.  For
related in-depth study, the reader is referred to, for
instance,~\cite{Cha-Ko:1999-2,Friedl-Powell:2011-1,Cha:2012-1}.  With
our respective coauthors, we gave non-concordant linking number one
links with two unknotted components, for which abelian invariants such
as the multivariable Alexander polynomial are unable to obstruct them
from being concordant.

There are other examples, which have knotted components: in~\cite[end
of Section~1]{Cochran-Franklin-Hedden-Horn:2011-1}, they discuss
2-component linking number zero links with homeomorphic zero surgery
manifolds which have non-concordant (knotted) components.  These links
are obviously not concordant, and it can be seen that the
homeomorphisms preserve meridians up to homotopy.

By contrast with the above examples, our links have unknotted
components and vanishing pairwise linking numbers.  Another feature
exhibited by the links of Theorems
\ref{theorem:links-not-conc-hom-cobordant-0-surgeries}
and~\ref{theorem:links-diffeo-zero-surgeries-not-concordant} is that a great deal of the subtlety of symmetric grope concordance of
links can occur within a single homology cobordism/homeomorphism
class of the zero surgeries, even modulo local knot tying.

We remark that all the links of
Theorems~\ref{theorem:links-not-conc-hom-cobordant-0-surgeries}
and~\ref{theorem:links-diffeo-zero-surgeries-not-concordant} lie in
the same `$k$-solvequivalence class' for all $k$ in the sense
of~\cite[Definition~2.5]{Cochran-Kim:2004-1}.

\subsection{Link exteriors and the homology surgery approach}

Our results serve to underline the philosophy that when investigating
the relative problem of whether two links are concordant, and neither
of them are the unlink, one should consider obstructions to homology
cobordism of the link exteriors viewed as bordered manifolds,
rather than to homology cobordism of the zero surgery manifolds,
even in low dimensions.  This was implemented in, for
example~\cite{Kawauchi:1978-1, Nakagawa-1978, Cha:2012-1}
(see also \cite{Friedl-Powell:2011-1} for a related approach).

Although we stated our results in terms of grope concordance of links,
in Theorems~\ref{theorem:links-not-conc-hom-cobordant-0-surgeries}
and~\ref{theorem:links-diffeo-zero-surgeries-not-concordant} given
above, in fact we show more: the link exteriors are far from being homology
cobordant, as measured in terms of Whitney towers.  A more detailed
discussion is given in Section~\ref{section:amenable-signature}.  For
the purpose of distinguishing exteriors, we use the amenable
Cheeger-Gromov $\rho$-invariant technology for bordered 3-manifolds
(particularly for link exteriors) developed in~\cite{Cha:2012-1},
generalising applications of $\rho$-invariants to concordance and
homology cobordism in~\cite{Cochran-Orr-Teichner:1999-1,
  Cochran-Harvey-Leidy:2009-1, Cha-Orr:2009-01}.

We will now discuss our results from the viewpoint of the homology
surgery approach to link concordance classification, initiated by
S. Cappell and
J. Shaneson~\cite{Cappell-Shaneson:1974-1,Cappell-Shaneson:1980-1} and
implemented in high dimensions by J. Le Dimet~\cite{LeDimet:1988-1}
using P. Vogel's homology localisation of spaces~\cite{Vogel:1978-1}.
The strategy consists of two parts.  Consider the problem of comparing
two given link exteriors.  First we decide whether the exteriors have
the same ``Poincar\'e type,'' which roughly means that they have
homotopy equivalent Vogel homology localisations.  If so, there is a
common finite target space, into which the exteriors are mapped by
homology equivalences rel.\ boundary.  Once this is the case, a
surgery problem is defined, and one can try to decide whether homology
surgery gives a homology cobordism of the exteriors.  The first step
is obstructed by homotopy invariants (including Milnor
$\overline{\mu}$-invariants in the low dimension).  The failure of the
second step is measured by surgery obstructions, which are not yet
fully formulated in the low dimension (even modulo that 4-dimensional
surgery might not work), since the fundamental group plays a more
sophisticated central r\^{o}le; see \cite{Powell:2012-1} for the
beginning of an algebraic surgery approach to this problem in the
context of knot slicing.

Our examples illustrate that for many Poincar\'e types, namely those
in Theorems~\ref{theorem:links-not-conc-hom-cobordant-0-surgeries}
and~\ref{theorem:links-diffeo-zero-surgeries-not-concordant}, we get a
rich theory of surgery obstructions within each Poincar\'e type, which
is invisible via zero surgery manifolds.  We remark that for our links
$L_i$ in
Theorems~\ref{theorem:links-not-conc-hom-cobordant-0-surgeries}
and~\ref{theorem:links-diffeo-zero-surgeries-not-concordant}, there is
a homology equivalence of the exterior of each $L_i$ into that of a
fixed one, say~$L_1$, since we use satellite constructions (see
Section~\ref{section:construction-links-grope-concordance}).  It
follows that the exteriors have the same Poincar\'e type in the above
sense.  In this paper, (parts of the not-yet-fully-formulated)
homology surgery obstructions in dimension~4 have their incarnation in
Theorem~\ref{theorem:amenable-signature-for-solvable-cobordism}, the
Amenable Signature Theorem.

\subsection*{Organisation of the paper}

In Section~\ref{section:hypotheses}, we explore the implications of
the hypothesis that a homology equivalence $f \colon M_L \to T^3$ as
in Theorem \ref{Theorem:borromean_homology_equiv} exists, and we prove
Theorem \ref{Theorem:borromean_homology_equiv} in Section
\ref{section:proof-of-thm-1.2}.  In
Section~\ref{section:construction-links-grope-concordance}, we
construct links with a given Milnor invariant with non-repeating
indices, and perform satellite operations on the links to construct
the links of Theorems
\ref{theorem:links-not-conc-hom-cobordant-0-surgeries}
and~\ref{theorem:links-diffeo-zero-surgeries-not-concordant}, which
are height $h$ symmetric grope concordant.  In
Section~\ref{section:amenable-signature}, we show that none of these
links are height $h+1$ grope concordant to one another.

\subsection*{Acknowledgements}

We would like to thank Stefan Friedl for many conversations: our first
examples were motivated by joint work with him.  We also thank Jim
Davis for discussions relating to the proof of his theorem on Hopf
links, and Prudence Heck, Charles Livingston, Kent Orr and Vladimir
Turaev for helpful discussions.
Finally we thank the referee for valuable comments and for being impressively expeditious.

Part of this work was completed while the first author was a visitor
at Indiana University in Bloomington and the second author was a
visitor at the Max Planck Institute for Mathematics in Bonn.  We would
like to thank these institutions for their hospitality.

The first author was partially supported by National Research
Foundation of Korea grants 2013067043 and 2013053914.  The second
author gratefully acknowledges an AMS Simons travel grant which aided
his travel to Bonn.

\section{Homology type of zero surgery manifolds and the 3-torus}
\label{section:hypotheses}

This section discusses the hypotheses of
Theorem~\ref{Theorem:borromean_homology_equiv}.  We begin the section
by briefly reminding the reader who is familiar with Kirby calculus of
a nice way to see the following well known fact.

\begin{lemma}\label{lemma:zero-surg-borromean-equals-3-torus}
  The zero surgery manifold of the Borromean rings is homeomorphic to
  the 3-torus.
\end{lemma}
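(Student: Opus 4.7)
The plan is to give a short Kirby calculus argument, exhibiting $T^3$ as the boundary of a handlebody on the one hand and of $0$-surgery on the Borromean rings on the other. Start from the observation $T^3 = \partial(T^2 \times D^2)$. Build $T^2 \times D^2$ using the handle decomposition induced by the standard CW structure of $T^2$ (one $0$-cell, two $1$-cells $a$, $b$, and one $2$-cell attached along the commutator $[a,b]$) crossed with $D^2$: this gives one $0$-handle, two $1$-handles, and a single $2$-handle whose attaching circle $\gamma$ lies in $\#^2(S^1\times S^2)$ and represents the commutator $[a,b] \in \pi_1(\#^2(S^1\times S^2)) = F_2$. Because the normal bundle of $T^2$ in $T^2\times D^2$ is trivial, the $2$-handle is attached with the product framing.

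Next, convert each $1$-handle into a $0$-framed $2$-handle by the standard Kirby move (replace each dotted unknotted circle by a $0$-framed unknotted circle). This does not change the boundary and rewrites the $4$-manifold as $0$-surgery on a $3$-component link $L \subset S^3$. The link $L$ consists of two unknotted, unlinked $0$-framed circles $U_1$, $U_2$ (formerly the $1$-handles), together with $\gamma$, which is now a $0$-framed curve in $S^3 \setminus (U_1 \cup U_2)$ representing the commutator of meridians of $U_1$ and $U_2$ in $\pi_1(S^3 \setminus (U_1\cup U_2)) = F_2$. (The product framing from the previous paragraph corresponds to the $0$-framing of $\gamma$ because this framing is induced by any Seifert surface for $\gamma$ in the complement of $U_1\cup U_2$.)

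It remains to recognise $L$ as the Borromean rings. The commutator curve, drawn so that it follows the word $a b a^{-1} b^{-1}$ around $U_1$ and $U_2$, can be untangled by an ambient isotopy in $S^3$ into the third component of the standard diagram of the Borromean rings; this is the pictorial heart of the argument and is really the only step that needs care.

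The main obstacle, therefore, is the isotopy identifying $\gamma \cup U_1 \cup U_2$ with the Borromean rings. This is most transparently handled by a figure, which I expect the authors to provide, together with the standard observations about the $1$-handle/$2$-handle trade and the fact that the Seifert-surface framing of the commutator curve in $S^3 \setminus (U_1 \cup U_2)$ agrees with the product framing inherited from $T^2 \times D^2$.
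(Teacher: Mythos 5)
Your proposal is correct and is essentially the paper's argument run in the opposite direction: the paper dots two components of the Borromean rings and recognises the diagram as $T^2\times D^2$ (using that each component is a commutator of the meridians of the other two), then trades the dotted circles for zero framed 2-handles, while you build $T^2\times D^2$ from its handle decomposition, perform the same 1-handle/2-handle trade, and identify the resulting link as the Borromean rings. Both proofs rest on the same two facts (the commutator description of the components and the boundary-preserving handle trade), so there is no substantive difference.
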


\begin{proof}
Place dots on two components of the Borromean
rings and a zero near the other.  Each component of the Borromean
rings is a commutator in the meridians of the other two components,
so this is a Kirby diagram for $T^2 \times D^2$, whose boundary is
$T^3$.  The 1-handles (dotted circles) can be replaced with
 zero framed 2-handles without changing the boundary.
\end{proof}

In the following proposition we expand on the meaning and implications
of the condition in Theorem \ref{Theorem:borromean_homology_equiv}.
Denote the exterior of a link $L$ by $X_L := S^3 \setminus \nu L$ as
before.

\begin{proposition}\label{Prop:necessary-conditions-for-f}

  Suppose that $L$ is a 3-component link.  Then the following are
  equivalent.
  \begin{enumerate}
  \item \label{item-nec-conditions-1} There is a $\Lambda$-homology
    equivalence $f\colon M_L \to T^3$.
  \item \label{item-nec-conditions-2} The preferred longitudes generate the link
    module $H_1(X_L;\Lambda)$.
  \item \label{item-nec-conditions-3} The pairwise linking numbers of $L$ vanish and
    $H_1(M_L;\Lambda)=0$.
  \end{enumerate}
  Furthermore, (any of) the above conditions imply that $L$ has
  multi-variable Alexander polynomial
  $\Delta_L=(t_1-1)(t_2-1)(t_3-1)$, and
  $\Delta_L=(t_1-1)(t_2-1)(t_3-1)$ implies that the Milnor invariant
  $\ol{\mu}_L(123)$ is equal to $\pm 1$.
\end{proposition}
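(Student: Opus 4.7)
The plan is to prove the three-way equivalence first, and then deduce the two consequences about $\Delta_L$ and $\ol{\mu}_L(123)$. First I would observe that each of the conditions (1), (2), (3) forces the pairwise linking numbers of $L$ to vanish: this is explicit in (3); for (1), the classifying map $f_L\colon M_L\to T^3=K(\Z^3,1)$ in the prescribed homotopy class exists only when the longitudes become nullhomologous in $M_L$; and for (2), the preferred longitudes lift to the maximal abelian cover of $X_L$ only when their images $\sum_{j\neq i}\lk(L_i,L_j)\mu_j$ in $H_1(X_L)=\Z^3$ vanish. From here on I assume vanishing linking numbers.

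For the equivalence (2)$\Leftrightarrow$(3), I would apply the Mayer--Vietoris sequence with $\Lambda$-coefficients to the decomposition $M_L=X_L\cup_{\partial X_L}\bigsqcup_{i=1}^3(D^2\times S^1)_i$. Under vanishing linking numbers the $\Lambda$-cover of each boundary torus is a cylinder, so $H_j(\partial X_L;\Lambda)\cong\bigoplus_i\Lambda/(t_i-1)$ for $j=0,1$, with the $H_1$-generators being the preferred longitudes; meanwhile the $\Lambda$-cover of each filling solid torus has vanishing $H_1$. The $H_0$ connecting map is easily seen to be injective by projecting onto the factor coming from the filling tori, so the sequence collapses to give
\[
H_1(M_L;\Lambda)\;\cong\;H_1(X_L;\Lambda)/\ll\lambda_1,\lambda_2,\lambda_3\rr,
\]
from which (2)$\Leftrightarrow$(3) is immediate.

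For (1)$\Leftrightarrow$(3), the direction (1)$\Rightarrow$(3) is immediate because $\wt{T^3}=\R^3$ is contractible, so $H_i(T^3;\Lambda)=0$ for $i>0$. The converse requires showing that $f_L$ induces isomorphisms on all of $H_*(-;\Lambda)$: $H_0$ is automatic, $H_1$ is (3), so the remaining content is $H_2(M_L;\Lambda)=H_3(M_L;\Lambda)=0$. I would combine Poincar\'e duality $H_i(M_L;\Lambda)\cong H^{3-i}(M_L;\Lambda)$ with the universal coefficient spectral sequence $E_2^{p,q}=\Ext^p_\Lambda(H_q(M_L;\Lambda),\Lambda)\Rightarrow H^{p+q}(M_L;\Lambda)$. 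The key algebraic fact is that $(t_1-1,t_2-1,t_3-1)$ is a $\Lambda$-regular sequence, so the Koszul resolution of $\Z=\Lambda/(t_1-1,t_2-1,t_3-1)$ yields $\Ext^p_\Lambda(\Z,\Lambda)=0$ for $p<3$. This kills $H^0(M_L;\Lambda)$ (hence $H_3$), and combined with $H_1=0$ also kills $H^1(M_L;\Lambda)$ (hence $H_2$).

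For the final two assertions, the same Mayer--Vietoris sequence together with $H_2(M_L;\Lambda)=0$ identifies $H_1(\partial X_L;\Lambda)\to H_1(X_L;\Lambda)$ as an isomorphism; hence $H_1(X_L;\Lambda)\cong\bigoplus_i\Lambda/(t_i-1)$ with square presentation matrix $\operatorname{diag}(t_1-1,t_2-1,t_3-1)$, and a routine Fitting/elementary ideal computation gives $\Delta_L=(t_1-1)(t_2-1)(t_3-1)$ up to units. For the implication to $\ol{\mu}_L(123)=\pm 1$ I would cite the classical formula of Traldi and others: for a $3$-component link with vanishing linking numbers, the lowest-degree $I$-adic term of $\Delta_L$ (with $I=(t_1-1,t_2-1,t_3-1)$ the augmentation ideal) equals $\pm\ol{\mu}_L(123)\cdot(t_1-1)(t_2-1)(t_3-1)$, and the stated value of $\Delta_L$ then forces $\ol{\mu}_L(123)=\pm 1$. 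The principal technical obstacle is the $\Ext$ vanishing in the (3)$\Rightarrow$(1) direction, since $\Lambda$ is not a PID and naive universal coefficient arguments fail; the Traldi-type formula in the last step must also be cited with some care.
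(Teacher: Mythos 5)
Your proof of the three-way equivalence is correct and essentially the paper's: your Mayer--Vietoris computation of $H_1(M_L;\Lambda)\cong H_1(X_L;\Lambda)/\ll\lambda_1,\lambda_2,\lambda_3\rr$ is the same content as the paper's handle-attachment argument, and your $(3)\Rightarrow(1)$ step (Poincar\'e duality plus the universal coefficient spectral sequence, with $\Ext^p_\Lambda(\Z,\Lambda)=0$ for $p<3$) is exactly the paper's argument; the paper justifies the $\Ext$-vanishing via $H^*(T^3;\Lambda)$ and kills $H_3$ by non-compactness of the $\Z^3$-cover, while you use the Koszul resolution and $\Hom_\Lambda(\Z,\Lambda)=0$ --- equivalent justifications. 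Where you genuinely diverge is in the two consequences. For $\Delta_L$, the paper only extracts an \emph{epimorphism} $\bigoplus_i\Lambda/(t_i-1)\twoheadrightarrow H_1(X_L;\Lambda)$ (from $(t_i-1)\lambda_i=0$ and condition (2)), giving divisibility, and then uses the Torres condition to force each factor $t_i-1$ into $\Delta_L$; you instead feed $H_2(M_L;\Lambda)=0$ back into Mayer--Vietoris to upgrade to an \emph{isomorphism} $H_1(X_L;\Lambda)\cong\bigoplus_i\Lambda/(t_i-1)$, which gives $E_0$ and hence $\Delta_L$ on the nose. That is a correct and arguably cleaner route (it avoids Torres entirely), at the cost of relying on the $H_2$-vanishing established in $(3)\Rightarrow(1)$.

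For the last implication the paper specialises to one variable, gets $\nabla_L(z)=z^4$, and cites Cochran's identification of the $z^4$-coefficient with $(\ol{\mu}_L(123))^2$ for algebraically split 3-component links. Your alternative, via the lowest-order $I$-adic term of the multivariable polynomial, is viable, but the formula you quote is off: the leading term is $\pm\ol{\mu}_L(123)^2\,(t_1-1)(t_2-1)(t_3-1)$ (a determinant of an antisymmetric matrix of triple $\ol{\mu}$-invariants, i.e.\ a Pfaffian squared), not the first power --- this is what is consistent with Cochran's coefficient formula under the substitution $t_i=t$. The conclusion $\ol{\mu}_L(123)=\pm1$ is unaffected, since $\ol{\mu}_L(123)^2=1$ gives the same answer, but you should correct the cited statement (and note, as the paper implicitly does, that both versions are applied in the presence of vanishing linking numbers, which you have from the equivalent conditions).
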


\begin{proof}
  First we will observe (\ref{item-nec-conditions-2}) and
  (\ref{item-nec-conditions-3}) are equivalent.  Longitudes of $L$
  represent elements in $H_1(X_L;\Lambda) \cong
  \pi_1(X_L)^{(1)}/\pi_1(X_L)^{(2)}$ if and only if they are zero in
  $H_1(X_L;\Z)\cong \Z^3$; that is, the pairwise linking numbers are
  zero.  If this is the case, $H_1(M_L;\Lambda)$ is isomorphic to
  $H_1(X_L;\Lambda)/\langle\text{longitudes}\rangle$, since $M_L$ is
  obtained by attaching three 2-handles to $E_L$ along the longitudes
  and then attaching three 3-handles along the boundary.  It follows
  that longitudes generate $H_1(X_L;\Lambda)$ if and only if
  $H_1(M_L;\Lambda)=0$.

  Suppose (\ref{item-nec-conditions-1}) holds.  Denote the meridians
  of $L$ by $\mu_i$ ($i=1,2,3$) and the linking number of the $i$th
  and $j$th components by $\ell_{ij}$.  The $i$th longitude
  $\lambda_i$, which is homologous to $\sum_{j\ne i} \ell_{ij}\mu_i$,
  is zero in $H_1(M_L;\Z)\cong H_1(T^3;\Z)$.  Since $\{f_*([\mu_i])\}$
  forms a basis of $H_1(T^3;\Z)\cong \Z^3$, it follows by linear
  independence that $\ell_{ij}=0$ for any $i$ and~$j$.  Also,
  $H_1(M_L;\Lambda) \cong H_1(T^3;\Lambda)=0$.  This shows
  that~(\ref{item-nec-conditions-3}) holds.

  Suppose (\ref{item-nec-conditions-3}) holds.  Start with a map
  $g\colon \partial X_L = \bigsqcup_3 S^1\times S^1 \to T^3$ that
  sends $\mu_i$ to the $i$th $S^1$ factor and $\lambda_i$ to a point.
  Observe that $g_*\colon H_1(\partial X_L;\Z) \to H_1(T^3;\Z)$
  factors through the inclusion induced map $i_*\colon H_1(\partial
  X_L;\Z) \to H_1(M_L;\Z)$ and the identifications $H_1(M_L;\Z)\toiso
  \Z^3 \xleftarrow{\simeq} H_1(T^3;\Z)$; this follows from the fact
  that $H_1(\partial X_L;\Z) \cong \Z^6$ is generated by the $\mu_i$
  and $\lambda_i$ and that both $g_*$ and $i_*$ are quotient maps,
  with their kernels generated by the~$\lambda_i$.  Since $T^3$ is a
  $K(\Z^3,1)$, elementary obstruction theory shows that $g$ extends to
  a map $f\colon M_L \to T^3$.

  Consider the universal coefficient spectral sequence (see e.g.~\cite[Theorem~2.3]{Levine:1977-1})
  $E^2_{p,q}=\Ext^p_\Lambda(H_q(M_L;\Lambda),\Lambda) \Rightarrow
  H^n(M_L;\Lambda)$.  We have $E^2_{0,1}=0$ since
  $H_1(M_L;\Lambda)=0$, and $E^2_{1,0} =
  \Ext^1_\Lambda(\Z,\Lambda)=H^1(T^3;\Lambda)=0$.  It follows that
  $H^1(M_L;\Lambda)=0$.  By duality, $H_2(M_L;\Lambda)=0$.  Also,
  $H_3(M_L;\Lambda)=0$ since the $\Z^3$-cover of $M_L$ is non-compact.
  Since $H_0(M_L;\Lambda)\cong \Z \cong H_0(T^3;\Lambda)$ and
  $H_i(T^3;\Lambda)=0$ for $i>0$, it follows that $f$ is a
  $\Lambda$-homology equivalence.  This completes the proof of the
  equivalence of (\ref{item-nec-conditions-1}),
  (\ref{item-nec-conditions-2}) and~(\ref{item-nec-conditions-3}).

  Suppose (\ref{item-nec-conditions-1}), (\ref{item-nec-conditions-2})
  and (\ref{item-nec-conditions-3}) hold.  Recall that the scalar
  multiplication of a loop by $t_i$ in the module $H_1(X_L;\Lambda)$
  is defined to be conjugation by~$\mu_i$.  Since $\lambda_i$ and
  $\mu_i$ commute as elements of the fundamental group, we have
  $(t_i-1)\lambda_i=0$ in $H_1(X_L;\Lambda)$.  From this and
  (\ref{item-nec-conditions-2}), it follows that there is an
  epimorphism of $A:=\bigoplus_{i=1}^3 \Lambda/\langle t_i-1\rangle$
  onto $H_1(X_L;\Lambda)$.  Since the zero-th elementary ideal of $A$
  is the principal ideal generated by $(t_1-1) (t_2-1) (t_3-1)$, it
  follows that $\Delta_L$ is a factor of $(t_1-1) (t_2-1) (t_3-1)$.
  We now invoke the Torres condition (see e.g.\
  \cite[Theorem~7.4.1]{Kawauchi:1996-1}): $\Delta_L(1,t_2,t_3) =
  (t_2^{\ell_{12}}t_3^{\ell_{13}}-1)\Delta_{L'}(t_2,t_3)$ where $L'$
  is the sublink of $L$ with the first component deleted and
  $\ell_{ij}$ is the pairwise linking number.  Since $\ell_{ij}=0$ by
  (3), we have $\Delta_L(1,t_2,t_3)=0$.  It follows that $t_1-1$ is a
  factor of $\Delta_L$.  Similarly $t_2-1$ and $t_3-1$ are factors.
  Therefore $\Delta_L(t_1,t_2,t_3)=(t_1-1)(t_2-1)(t_3-1)$.

  To show the last part, suppose that $\Delta_L(t_1,t_2,t_3) =
  (t_1-1)(t_2-1)(t_3-1)$.  By
  \cite[Proposition~7.3.14]{Kawauchi:1996-1}, the single-variable
  Alexander polynomial $\Delta_L(t)$ of $L$ is given by
  \[
  \Delta_L(t) = (t-1)\Delta_L(t,t,t) = (t-1)^4 \doteq
  ((\sqrt{t})^{-1}-\sqrt{t})^4.
  \]
  It follows that $L$ has Conway polynomial $\nabla_L(z)=z^4$, by
  the standard substitution $z=(\sqrt{t})^{-1}-\sqrt{t}$.
  In~\cite[Theorem~5.1]{Cochran:1985-1}, Cochran identified the
  coefficient of $z^4$ in $\nabla_L(z)$ with $(\mu_L(123))^2$ for
  3-component links with pairwise linking number zero.  Applying this
  to our case, it follows that $\ol{\mu}_L(123) = \pm 1$.
\end{proof}

\section{Construction of homology cobordisms using topological surgery}
\label{section:proof-of-thm-1.2}

This section gives the proof of Theorem
\ref{Theorem:borromean_homology_equiv}.  The proof will use surgery
theory, and will parallel the proof given by Davis
in~\cite{Davis:2006-1} (see also \cite[Section~7.6]{Hillman:2002-1}).
We will provide some details in order to fill in where the treatment
in \cite{Davis:2006-1} was terse, and to convince ourselves that the
analogous arguments work in the case of interest.

For the convenience of the reader we restate
Theorem~\ref{Theorem:borromean_homology_equiv} here.

\begin{thmn}[\ref{Theorem:borromean_homology_equiv}]
  Suppose $L$ is a $3$-component link whose components have trivial
  Arf invariants and there exists a $\Lambda$-homology equivalence
  $M_L \to T^3$.  Then there is a homology cobordism $W$ between $M_L$
  and $T^3=M_{\Bor}$ for which the inclusion induced maps $\pi_1(M_L)\to
  \pi_1(W) \xleftarrow{\simeq} \pi_1(T^3)$ are such that the composition
  from left to right takes meridians to meridians.
\end{thmn}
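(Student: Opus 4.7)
The plan is to follow the surgery-theoretic strategy used by Davis~\cite{Davis:2006-1} for two component linking number one links, with the fundamental group $\Z^2$ arising there replaced by $\pi_1(T^3)=\Z^3$. Since $\Z^3$ is polycyclic, hence a good group in the sense of Freedman--Quinn, 4-dimensional topological surgery is available. The goal is to construct a topological 4-manifold $W$ with $\partial W = M_L \sqcup -T^3$ together with a homotopy equivalence of pairs $\Phi\colon W \to T^3 \times I$ restricting to $f$ and $\mathrm{id}$ on the boundary; such a $W$ will automatically be a $\Z$-homology cobordism with the required meridian-preserving property.

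I would first construct an initial 4-dimensional normal cobordism. By the Atiyah--Hirzebruch spectral sequence, $\Omega_3^{STOP}(T^3)\cong\Z$ is detected by the degree of the fundamental class, and both $f\colon M_L\to T^3$ and the identity $T^3\to T^3$ have degree one (the former because it is a $\Z$-homology equivalence), so $f$ and $\mathrm{id}$ are bordant in $\Omega_3^{STOP}(T^3)$. Combined with stable parallelisability of topological 3-manifolds, this produces a 4-manifold $V$ with $\partial V = M_L \sqcup -T^3$ and a degree one normal map $\Phi\colon V \to T^3 \times I$ restricting correctly on the boundary. Standard topological surgery below the middle dimension, valid because $\Z^3$ is good, then arranges that $\Phi_\ast\colon \pi_1(V)\to\Z^3$ is an isomorphism.

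The central step is to perform middle-dimensional surgery to kill the remaining surgery kernel. The obstruction lies in $L_4(\Z[\Z^3])$, which by the Shaneson--Wall splitting decomposes as
\[
L_4(\Z[\Z^3]) \cong L_4(\Z) \oplus 3\,L_3(\Z) \oplus 3\,L_2(\Z) \oplus L_1(\Z) \cong \Z \oplus (\Z/2)^3.
\]
The $\Z$ summand is a signature, which can be made to vanish by a stabilisation argument (for instance, connect summing $V$ with copies of $\pm\mathbb{CP}^2$ absorbed into the normal bordism class, followed by surgering away the extra homology). The three $\Z/2$ summands are Kervaire--Arf invariants of framed surfaces in $V$ which are Poincar\'e dual to generators of $H^2(T^3\times I;\Z)\cong\Z^3$; by a transversality argument each of these can be identified with the Arf invariant of the corresponding component of $L$. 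The hypothesis that the components of $L$ have trivial Arf invariants therefore kills these three summands, so $\sigma(\Phi)=0$ and the surgery can be completed.

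Once surgery has produced $\Phi\colon W\to T^3\times I$ as a $\Lambda$-homology equivalence, a Hurewicz--Whitehead argument using $\pi_1(W)=\Z^3=\pi_1(T^3\times I)$ upgrades $\Phi$ to a homotopy equivalence of pairs, and in particular $W$ is a $\Z$-homology cobordism between $M_L$ and $T^3=M_{\Bor}$. The meridian-preservation statement is then built in: by Proposition~\ref{Prop:necessary-conditions-for-f} the map $f$ sends each meridian of $L$ to the corresponding circle factor of $T^3$, and this is preserved through the homotopy equivalence $W\simeq T^3\times I$. The main obstacle in this plan is the geometric identification of the three $\Z/2$ surgery obstructions with the component Arf invariants of $L$; this requires a careful analysis of representative framed surfaces for the middle-dimensional surgery kernel and the localisation of their Kervaire invariants at the individual link components.
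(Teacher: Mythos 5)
There is a genuine gap, and it sits at the very first step: the claim that a degree one \emph{normal} cobordism between $f\colon M_L\to T^3$ and $\Id\colon T^3\to T^3$ exists simply because both maps have degree one and $\Omega_3^{STOP}(T^3)\cong\Z$. A bordism over $T^3$ of the underlying manifolds is not a bordism of normal maps: the bundle/framing data is part of the structure, and the relevant group is the framed bordism group $\Omega_3^{\fr}(T^3)\cong \Z_{24}\oplus(\Z_2)^3\oplus(\Z_2)^3\oplus\Z$ (equivalently, one must match normal invariants in $[T^3,G/TOP]$, whose $H^2(T^3;\Z_2)\cong(\Z_2)^3$ part does not see the degree). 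The three $\Z_2$ summands coming from $\Omega_2^{\fr}$ are exactly the framed bordism classes of the capped-off Seifert surfaces $(\pr_i\circ f)^{-1}(\ast)$, i.e.\ the Arf invariants of the components $L_i$; unlike the $\Omega_1^{\fr}$ and $\Omega_3^{\fr}$ parts, these cannot be changed by re-choosing the framing on $M_L$ or homotoping $f$. So the Arf hypothesis is needed precisely to make the initial normal cobordism exist, which is the content of Lemma~\ref{Lemma:existence-of-normal-cobordism} in the paper; your proposal skips this and therefore uses the hypothesis in the wrong place.

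Correspondingly, your treatment of the surgery obstruction is off in two ways. The three $L_2(\Z)\cong\Z_2$ summands of $L_4(\Z[\Z^3])$ of the (honest) normal cobordism are not canonically ``the Arf invariants of the components'' --- they depend on the chosen normal cobordism and framings --- and they need not vanish even under the Arf hypothesis; in the paper (following Davis) they are killed \emph{geometrically}, by cutting out $\Sigma_i\times D^2$ around a surface dual to $S_i$ and regluing $\Sigma_i\times(\text{punctured torus})$ with the Lie framing, which adds $1$ to the corresponding Arf obstruction without touching the boundary. Also, the $L_4(\Z)\cong\Z$ summand is $\sigma/8$, so it is corrected by connected sum with copies of the closed topological $E_8$-manifold; connect-summing with $\pm\mathbb{CP}^2$ destroys the normal (stably framed) structure and the extra generator of $H_2$, having odd self-intersection, cannot simply be surgered away, so that stabilisation argument as stated does not work. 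Once these points are repaired, the remainder of your outline (surgery below the middle dimension, goodness of $\Z^3$, exactness of the topological surgery sequence, and the diagram chase showing meridians are preserved) does match the paper's argument.
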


\begin{remark}
  It is an interesting question to determine whether there are extra
  conditions which can be imposed in order to see that the Arf
  invariants of the components are forced to vanish by the homological
  assumptions.  In the cases of knots and two component links with
  Alexander polynomial one, the Arf invariants of the components are
  automatically trivial.  For knots $\Delta_K(-1)$ computes the Arf
  invariant by~\cite{Levine:1966-1}.  For two component links one
  observes that $\Delta_L(t,1)$ and $\Delta_L(1,t)$ give the Alexander
  polynomials of the components, by the Torres condition, and then
  applies Levine's theorem.  These arguments do not seem to extend to
  the three component case of current interest.
\end{remark}

The proof of Theorem~\ref{Theorem:borromean_homology_equiv} will
occupy the rest of this section.  In order to produce a homology
cobordism, we will first show that there is a normal cobordism between
normal maps $f \colon M_L \to T^3$ and $\Id \colon T^3 \to T^3$.
Interestingly, we can work with smooth manifolds in order to establish
the existence of a normal cobordism.  This will make arguments which
invoke tangent bundles and transversality easier to digest.  Only at
the end of the proof of Theorem
\ref{Theorem:borromean_homology_equiv}, where we take connected sums
with the $E_8$-manifold, and where we claim that the vanishing of a
surgery obstruction implies that surgery can be done, do we need to
leave the realm of smooth manifolds.

\begin{definition}
  Let $X$ be an $n$-dimensional manifold with a vector bundle $\nu \to
  X$.  A degree one normal map $(F,b)$ over $X$ is an $n$-manifold $M$
  with a map $F \colon M \to X$ which induces an isomorphism $F_*
  \colon H_n(M;\Z) \toiso H_n(X;\Z)$, together with a stable
  trivialisation $b \colon TM \oplus F^*\nu \oplus \eps^l \cong
  \eps^k$.

  A degree one normal cobordism $(J,e)$ between normal maps $(F \colon
  M \to X,b)$ and $(G \colon N \to X,c)$ is an $(n+1)$--dimensional
  cobordism between $M$ and $N$ with a map $J \colon W \to X \times I$
  extending $F \colon M \to X \times \{0\}$ and $G \colon M \to X
  \times \{1\}$, which induces an isomorphism $$J_* \colon
  H_{n+1}(W,\partial W;\Z) \toiso H_{n+1}(X \times I,X
  \times\{0,1\};\Z),$$ together with a stable trivialisation $e \colon
  TW \oplus J^*(\nu \times I) \oplus \eps^{l'} \cong \eps^{k'}$.
\end{definition}

For us, let $X = T^3$, and let $\nu$ be its tangent bundle.  We fix a
framing on the stable tangent bundle of the target torus $T^3$ once
and for all.  Note that this canonically determines a trivialisation
of the tangent bundle of $F^*\nu$, for any map $F \colon M \to X$, by
the following diagram, in which the bottom composition is the constant
map, denoted $\ast$, and the top composition is the pull back
$F^*\nu$.  The middle composition is the induced framing.
\[
\xymatrix @C+1cm{
M \times \{0\} \ar[d] \ar[r]^{F \times \Id} & T^3 \times \{0\} \ar[r]^{\nu} \ar[d] & BO(n) \ar[d] \\
M \times I \ar[r]^{F \times \Id} & T^3 \times I  \ar[r] & BO \\
M \times \{1\} \ar[u] \ar[r]^{F \times \Id} & T^3 \times \{1\} \ar[r]^{\ast} \ar[u] & BO(n). \ar[u]
}
\]
A framing of the tangent bundle of the domain will therefore determine
a normal map.

\begin{lemma}\label{Lemma:existence-of-normal-cobordism}
  Let $L$ be a link whose components have trivial $\Arf$ invariants,
  and let $f \colon M_L \to T^3$ be a degree one normal map which
  induces a $\Z$-homology isomorphism and which maps a chosen meridian
  $\mu_i$ to the $i$th $S^1$ factor of $T^3$ for $i=1,2,3$.  We can
  make a homotopy of $f$ and choose a framing on $M_L$ so that $f
  \colon M_L \to T^3$ and $\Id \colon T^3 \to T^3$ are degree one
  normal cobordant.
\end{lemma}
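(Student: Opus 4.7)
My plan is to apply standard surgery theory: the set of degree one normal maps to $T^3$ modulo normal cobordism is in canonical bijection with $[T^3,\, G/O]$, where $\Id_{T^3}$ corresponds to the basepoint. It will therefore suffice to show that, after suitably homotoping $f$ and re-choosing the stable framing on $M_L$, the normal invariant $[f] \in [T^3, G/O]$ vanishes.

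First I would compute $[T^3,\, G/O]$. The relevant low-degree homotopy groups are $\pi_1(G/O)=0$, $\pi_2(G/O)=\Z/2$, and $\pi_3(G/O)=0$, via the fibration $O \to G \to G/O$ and the surjectivity of the J-homomorphism $\pi_3(O)=\Z \twoheadrightarrow \pi_3(G)=\Z/24$. Since $T^3$ is $3$-dimensional, obstruction theory then gives
\[
  [T^3,\, G/O] \;\cong\; H^2(T^3;\,\Z/2) \;\cong\; (\Z/2)^3.
\]

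The three $\Z/2$ factors should be detected geometrically as follows. Let $T^2_i \subset T^3$ be the coordinate $2$-torus Poincar\'e dual to the $i$-th circle factor $S^1_i$. After making $f$ transverse to $T^2_i$, the preimage $F_i := f^{-1}(T^2_i)$ is a closed surface in $M_L$ with a stable framing coming from the normal-map data, and the $i$-th coordinate of $[f]$ is the Arf invariant of the induced quadratic refinement on $H_1(F_i;\Z/2)$. Because $f$ sends $\mu_i$ to $S^1_i$ with degree one and $S^1_i$ meets $T^2_i$ transversely in a single point, after transversality $F_i$ meets the $i$-th surgery solid torus in one meridian disk $D_i$ whose boundary is the zero-framed longitude $\lambda_i$; the complementary piece $F_i \cap X_L$ is a surface in the exterior with boundary $\lambda_i$, isotopic rel.\ boundary to a Seifert surface for $K_i$ pushed off via the $0$-framing. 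Hence $F_i$ is a closed-up Seifert surface for $K_i$, and its Arf invariant equals $\Arf(K_i)$. Under our hypothesis, $\Arf(K_i)=0$ for each $i$, so every component of $[f]$ vanishes and $f$ is normally cobordant to $\Id_{T^3}$.

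The main obstacle I expect is justifying the identification of the three $\Z/2$ normal-invariant components with the Arf invariants of the $K_i$: this requires care with the stable framing on $M_L$ and the precise dictionary between $[T^3, G/O]$ and transverse surface data, and should parallel Davis's argument in the $T^2$ case~\cite{Davis:2006-1}. The freedom to homotope $f$ and re-choose the stable framing on $M_L$ should be exactly what is needed to arrange this identification so that the vanishing of the component Arf invariants indeed implies the vanishing of the full normal invariant.
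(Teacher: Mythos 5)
Your proposal is correct in outline, but it takes a genuinely different route from the paper, which never passes to $G/O$: the paper works directly with the framed bordism group $\Omega_3^{\fr}(T^3)$ (computed via the splitting $\Sigma T^3 \simeq S^2 \vee S^2 \vee S^2 \vee S^3 \vee S^3 \vee S^3 \vee S^4$ and Hilton--Milnor) and kills the removable summands by explicit re-framings of $M_L$ --- along the meridians for the three $\Omega_1^{\fr}\cong\Z_2$ terms, and near a point, using surjectivity of $J\colon\pi_3(SO)\to\pi_3^S\cong\Z_{24}$, for the $\Omega_3^{\fr}$ term --- leaving only the three $\Omega_2^{\fr}\cong\Z_2$ classes of capped-off Seifert surfaces, which are the component $\Arf$ invariants and vanish by hypothesis. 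Your version divides out the framing ambiguity from the start by passing to the Sullivan--Wall normal invariant set $[T^3,G/O]\cong H^2(T^3;\Z_2)\cong(\Z_2)^3$; the inputs are the same (the behaviour of $J$ in degrees $1$ and $3$ is exactly what makes $\pi_1(G/O)=0=\pi_3(G/O)$), and your identification of the surviving classes via preimages of the dual $2$-tori is the same geometric step as the paper's analysis of $(\pr_i\circ f)^{-1}(\ast)$. Your route is shorter once the standard machinery is granted; the paper's is more self-contained and, importantly, produces exactly the structure its definition of normal cobordism demands (a fixed stably framed target bundle and an explicit framing on $M_L$), which the later surgery-obstruction argument uses. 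If you write yours up, two points need care: (i) the bijection with $[T^3,G/O]$ concerns normal maps in which the bundle over the target is allowed to vary, so a vanishing normal invariant must still be converted into a normal cobordism in the paper's framed sense --- harmless, since any bundle over $T^3\times I$ restricting to a stably trivial bundle is stably trivial, but this is precisely where ``choose a framing on $M_L$'' enters; and (ii) the $i$-th coordinate of the normal invariant is the Kervaire invariant of the restricted problem $f^{-1}(T^2_i)\to T^2_i$, i.e.\ the Arf invariant of the quadratic refinement on the surgery kernel, which agrees with the Arf invariant on all of $H_1(f^{-1}(T^2_i);\Z_2)$ only after normalising against the identity's preimage $T^2_i$ with its reference framing --- the same comparison the paper makes implicitly by contrasting $f$ with $\Id$. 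Neither point is a flaw in the strategy, but both must be addressed for a complete proof.
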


\begin{proof}
  We need to show that we can choose a framing on $M_L$ such that the
  disjoint union $M_L \sqcup -T^3$ represent the trivial element of
  $\Omega_3^{\fr}(T^3)$.  We compute this bordism group:
  \[
  \wt{\Omega}_3^{\fr}(T^3) \cong \wt{\Omega}_4^{\fr}(\Sigma T^3) \cong
  \wt{\Omega}_4^{\fr}(S^2 \vee S^2 \vee S^2 \vee S^3 \vee S^3 \vee S^3
  \vee S^4),
  \]
  with this last isomorphism induced by a homotopy equivalence of
  spaces.  There is a copy of $S^{i+1}$ for each $i$--cell of $T^3$,
  for $i =1,2,3$.  To see this homotopy equivalence, we need to see
  that the attaching maps of the cells are null-homotopic.  The
  suspension of the 1-skeleton of $T^3$ is $S^2 \vee S^2 \vee S^2$.
  The Hilton-Milnor theorem~\cite[Theorem~A]{Hilton-55} computes the
  homotopy groups of a wedge of spheres.  The attaching maps for the
  $2$-cells of $T^3$ become the attaching maps for the $3$-cells of
  $\Sigma T^3$, namely maps in $$\pi_2(S^2 \vee S^2 \vee S^2) \cong
  \bigoplus_3 \pi_2(S^2) \cong \bigoplus_3 \Z,$$ where the first
  isomorphism is by the Hilton-Milnor theorem.  The commutator
  attaching maps become trivial in the abelian $\pi_2(S^2)$.
  Therefore the 3-skeleton of $\Sigma T^3$ is a wedge~$S^2 \vee S^2
  \vee S^2 \vee S^3 \vee S^3 \vee S^3$.  The attaching map for the
  $3$-cell of $T^3$ becomes the attaching map for the $4$-cell of
  $\Sigma T^3$, a map in
  \[
  \pi_3(S^2 \vee S^2 \vee S^2 \vee S^3 \vee S^3 \vee S^3) \cong
  \bigoplus_{1 \le i \le 3} \pi_3(S^3) \oplus \bigoplus_3 \pi_3(S^2)
  \oplus \bigoplus_{1 \le i < j \le 3} \pi_3(S^3),
  \]
  again by the Hilton-Milnor theorem, where the last three
  $\pi_3(S^3)$ summands include into $\pi_3(S^2 \vee S^2 \vee S^2 \vee
  S^3 \vee S^3 \vee S^3)$ via composition with the Whitehead product:
  let $f_i \colon S^2 \to S^2_i$ be a generator of $\pi_2(S^2_i)$,
  where $S^2_i$ is the $i$th $S^2$ component in the wedge.  Then the
  Whitehead product is the homotopy class of the map $[f_i,f_j] \in
  \pi_3(S^2_i \vee S^2_j)$, which is the attaching map for the
  $4$-cell in a standard cellular decomposition of $S^2 \times S^2$.
  Since $\pi_2(S^1) \cong \pi_2(S^1 \vee S^1) \cong 0$, the summands
  associated to the $S^2$ components of the wedge do not arise from a
  suspension.  The summands associated to the $S^3$ components are
  null-homotopic since the 3-cell of $T^3$ is attached to each
  $2$-cell twice, once on either side.  This completes the explanation
  of the claimed homotopy equivalence:
  \[
  \Sigma T^3 \simeq S^2 \vee S^2 \vee S^2 \vee S^3 \vee S^3 \vee S^3
  \vee S^4.
  \]

  By Mayer-Vietoris, the bordism group~$\wt{\Omega}_4^{\fr}(S^2 \vee
  S^2 \vee S^2 \vee S^3 \vee S^3 \vee S^3 \vee S^4)$ is a direct sum
  \begin{align*}
    &\bigoplus_3\, \wt{\Omega}_4^{\fr} (S^2) \oplus \bigoplus_3
    \wt{\Omega}_4^{\fr}(S^3) \oplus \wt{\Omega}_4^{\fr} (S^4)
    \\
    \cong &\bigoplus_3\, \wt{\Omega}_2^{\fr} (S^0) \oplus \bigoplus_3
    \wt{\Omega}_1^{\fr} (S^0) \oplus \wt{\Omega}_0^{\fr} (S^0)
    \\
    \cong &\bigoplus_3\, \Omega_2^{\fr} \oplus \bigoplus_3
    \Omega_1^{\fr} \oplus \Omega_0^{\fr}
    \\
    \cong &\bigoplus_3\, \Z_2 \oplus \bigoplus_3 \Z_2 \oplus \Z.
  \end{align*}
  Therefore
  \[
  \Omega_3^{\fr}(T^3) \cong \Omega_3^{\fr} \oplus \bigoplus_3\, \Z_2
  \oplus \bigoplus_3 \Z_2 \oplus \Z \cong \Z_{24} \oplus \bigoplus_3\,
  \Z_2 \oplus \bigoplus_3 \Z_2 \oplus \Z.
  \]
  The isomorphism is given as follows.  Let
  \[
  \pr_i \colon T^3 = S^1 \times S^1 \times S^1 \to S^1
  \]
  be given by projection onto the $i$th factor.  Similarly let
  \[
  \qr_i \colon T^3 = S^1 \times S^1 \times S^1 \to S^1 \times S^1
  \]
  be given by forgetting the $i$th factor.  Let $F \colon M \to T^3$
  be an element of $\Omega_3^{\fr} (T^3)$. Making all maps transverse
  to a point, called $\ast$, we obtain an $8$-tuple:
  \begin{align*}
    \Big([M],(\pr_1 \circ F)^{-1}(\ast),(\pr_2 \circ F)^{-1}(\ast),
    (\pr_3 \circ F)^{-1}(\ast), (\qr_1 \circ F)^{-1}(\ast),
    \\
    (\qr_2 \circ F)^{-1}(\ast),(\qr_3 \circ F)^{-1}(\ast),
    F^{-1}(\ast) \Big)
    \\
    \in \Omega_3^{\fr} \oplus \bigoplus_3\, \Omega_2^{\fr} \oplus
    \bigoplus_3 \Omega_1^{\fr} \oplus \Omega_0^{\fr} \cong \Z_{24}
    \oplus \bigoplus_3\, \Z_2 \oplus \bigoplus_3 \Z_2 \oplus \Z.
  \end{align*}
We consider each of the summands in turn.

By choosing the appropriate orientation on $M_L$, and making the degree one normal maps transverse to a point, it can be arranged so that the disjoint union  $f^{-1}(\ast) \sqcup -\Id^{-1}(\ast) = \{pt\} \sqcup -\{pt\} = 0 \in \Omega_0^{\fr}$.

As observed in \cite[Proof~of~the~Lemma]{Davis:2006-1}, we can change the framing so that the elements of $\Omega_1^{\fr}$ agree.  First, we change the framing on each of three chosen meridians $\mu_i$ to the link components $L_i$.

Orientable $k$-plane vector bundles over $S^1$ are classified by homotopy classes of maps $[S^1,BSO(k)]$.  Consider the exact sequence:
\[\pi_2(BSO) \to \pi_2(BSO,BSO(k)) \to \pi_1(BSO(k)) \xrightarrow{\gamma} \pi_1(BSO).\]
A stably trivial vector bundle over $S^1$ gives us an element of $\ker(\gamma)$.  A choice of trivialisation of the vector bundle gives us a null homotopy and therefore an element of $\pi_2(BSO,BSO(k))$.  The possible choices of stable trivialisations, or framings, are indexed by $\pi_2(BSO) \cong \pi_1(SO) \cong \Z_2$.

We can therefore, if necessary, change the framing on each $\mu_i$ to be the bounding framing using an element of $\pi_1(SO(2))$ which maps to the non-trivial element of $\pi_1(SO)$.  Use the element of $\pi_1(SO(2))$ to change the framing on the normal bundle of $\mu_i$ in $M_L$.  We claim that these changes in the framing can be extended to the whole of $M_L$.  To see this, we argue as follows.  The dual of the inclusion map $H^1(M_L;\Z) \to H^1(\mu_i;\Z)$ is surjective, since each $[\mu_i]$ is a generator of $H_1(M_L;\Z)$.  The change of framing map $\mu_i \to SO(2)$ represents a homotopy class of maps in $[\mu_i, S^1]$ and therefore an element of $H^1(\mu_i;\Z)$.  Since this pulls back to an element of $H^1(M_L;\Z)$, which can in turn produce a map $M_L \to SO(2)$, the change of framing map can be extended as claimed.

Let $N_i \subset M_L$ be the submanifolds given by $(\qr_i \circ f)^{-1}(\ast)$, after perturbing $f$ to make $\qr_i \circ f$ transverse to a point.  As the inverse image of the $i$th $S^1$ factor of $T^3$ (e.g. $f^{-1}(S^1 \times \{\ast\} \times \{\ast\})$), $N_i$ is a collection of circles.  After a homotopy of $f$, it can be arranged, by the assumption on $f$, that $N_i$ is a single meridian $\mu_i$, which has the bounding framing and therefore represents the zero element in $\Omega_1^{\fr}$.  To make this arrangement, it suffices to be able to remove circles $N_i$ whose image in $T^3$ is null homologous.  But in $T^3$, a null homologous curve is also null homotopic.  Therefore we can make a homotopy of $f$ so that $N_i$ misses $S^1 \times \{\ast\} \times \{\ast\}$.

After another homotopy, the inverse image $(\pr_i \circ f)^{-1}(\ast)$ can
be arranged to be a capped-off Seifert surface $F_i \cup D^2$, where $F_i$ is a Seifert surface for $L_i$ (possibly with closed connected components).  To see this, we again use our assumption that $f$ sends the $i$th meridian $\mu_i$ to the $i$th
circle.  This assumption enables us to homotope $f$ so that $\pr_i \circ f$ sends a regular
neighbourhood $\mu_i\times D^2$ to $S^1$ by projection onto the first
factor.  Then the inverse image is as desired.  A homotopy of~$f$ preserves
the framed bordism class of $(\pr_i \circ f)^{-1}(\ast)$, and the
class $[F_i \cup D^2] \in \Omega_2^{\fr}$ is determined by the Arf
invariant of $L_i$.  By hypothesis, this vanishes.

Finally, again following \cite{Davis:2006-1} (see also \cite[Proof~of~Lemma~11.6B]{Freedman-Quinn:1990-1}), the framing can be altered in the neighbourhood of a point to change the element $[M] \in \Omega_3^{\fr}$ to the trivial element. We recall the definition of the $J$ homomorphism $J \colon \pi_3(SO) \to \pi_3^S \cong \Omega_3^{\fr}$, for the convenience of the reader, where $\pi_k^S$ is the $k$th stable homotopy group of spheres.
(Incidentally, $\pi_3(SO) \cong \Z$ and $\pi_3^S \cong \Z_{24}$.)   Given
$\theta \colon S^3 \to SO$, choose a $k$ sufficiently large so that we can represent $\theta$ by a map $\theta \colon S^3 \to SO(k)$.  We proceed to construct a map $(J(\theta) \colon S^{k+3} \to S^k) \in \pi_3^S$.  So:
\[S^{k+3} = S^3 \times D^k \cup_{S^3 \times D^{k-1}} D^4 \times S^{k-1}.\]
Define a map:
\[\begin{array}{rcl} j(\theta)\colon S^3 \times D^k &\to& S^3 \times D^k \\
(x,y) & \mapsto& (x,\theta(x)(y)),\end{array}\]
since $\theta(x) \in SO(k)$ acts on $D^k$ by identifying $D^k$ with the unit ball in $\R^k$.  This map extends to a homeomorphism $j(\theta)$ of $S^3 \times D^k \cup_{S^3 \times D^{k-1}} D^4 \times S^{k-1}$.  Form the composition:
\begin{align*} S^{k+3} &= S^3 \times D^k \cup_{S^3 \times D^{k-1}} D^4 \times S^{k-1} \xrightarrow{j(\theta)} S^3 \times D^k \cup_{S^3 \times D^{k-1}} D^4 \times S^{k-1} \\ &\xrightarrow{col} S^3 \times S^k \xrightarrow{proj_1} S^k,\end{align*}
where $col$ is the collapse map which squashes $D^4 \times S^{k-1}$ and $proj_1$ is the projection onto the first factor.  This gives an element of $\pi_3^S$, which is the image of $\theta$ under $J\colon \pi_3(SO) \to \pi_3^S \cong \Omega_3^{\fr}$.

This $J$ homomorphism is onto~\cite[Example~7.17]{Adams:1966-1}, so that composing the framing in a neighbourhood $D^3$ of a point with the choice of map in $\theta \in \pi_3(SO) = [(D^3,\partial D^3),(SO,\ast)]$ such that $-J(\theta) = [M] \in \Omega_3^{\fr}$, changes the class in $\Omega_3^{\fr}$ as desired.

This shows the existence of a normal cobordism $W'$.  To see that this is of degree one, note that the map to $T^3$ which extends over $W'$ can be used to define a map to $T^3 \times I$, by defining a map $g \colon W' \to I$ such that $g(M_L) =\{0\}$ and $g(T^3) = \{1\}$.   Now consider the commutative diagram:
\[\xymatrix{H_4(W',\partial W';\Z) \ar[r] \ar[d] & H_3(\partial W';\Z) \ar[d] \\ H_4(T^3 \times I, T^3 \times\{0,1\};\Z) \ar[r] & H_3(T^3 \times \{0,1\};\Z).}\]
Going right, then down, the fundamental class $[W',\partial W']$ maps to $$(1,-1) \in H_3(T^3 \times \{0,1\};\Z) \cong \Z \oplus \Z.$$  By commutativity, the relative fundamental class $[W',\partial W']$ must map to a generator of $H_4(T^3 \times I, T^3 \times\{0,1\};\Z)$
\end{proof}

A $\Lambda$-homology equivalence is also an integral homology
equivalence, by the following argument.  By definition, (see above the
statement of Theorem~\ref{Theorem:borromean_homology_equiv}), a
$\Lambda$-homology equivalence induces an isomorphism on $H_1(-;\Z)$.
By duality, we also have an isomorphism on $H_2(-;\Z)$.  It remains to
see that $f \colon M_L \to T^3$ is a degree one map.
The assumption that $f_* \colon H_*(M_L ;\Lambda) \toiso
  H_*(T^3;\Lambda)$ is an isomorphism implies that the relative
  homology vanishes: $H_*(T^3,M_L;\Lambda) \cong 0$.  The universal
  coefficient spectral sequence then implies that $H_*(T^3,M_L;\Z)
  \cong 0$ since all the $E^2$ terms
  $\Tor^{\Lambda}_p(H_q(T^3,M_L;\Lambda),\Z)$ vanish.  Therefore a $\Lambda$-homology equivalence as in Theorem~\ref{Theorem:borromean_homology_equiv} is a degree one map.

Lemma~\ref{Lemma:existence-of-normal-cobordism} then establishes the existence of a choice of stable framing $b$ on $M_L$ such that there is a degree one normal cobordism
\[(F' \colon W' \to T^3 \times I, e')\]
between $(f \colon M_L \to T^3,b)$ and $(\Id\colon T^3 \to T^3, c)$.  Choosing such a framing, we proceed to apply surgery theory to alter $W'$ into a homology cobordism.  Davis' observation in~\cite{Davis:2006-1} was that the framing on $W'$ is not an intrinsic part of the concordance problem, but rather necessary additional data which is required in order to be able to apply surgery theory.  Without the information provided by the self intersection form, it is not possible to obtain algebraic sufficient conditions which ensure that surgery can be performed.  Nevertheless, as we shall see, there is a certain amount of freedom in the choice of framing data.

Before giving the proof of Theorem \ref{Theorem:borromean_homology_equiv}, we first give the definition of the Wall even dimensional surgery obstruction groups, which we will use in the proof.

\begin{definition}[\cite{Wall-Ranicki:1999-1}, Chapter~5]
\label{definition:L-groups-even}
  Let $A$ be a ring with involution.
  A \emph{$(-1)^k$-Hermitian sesquilinear quadratic form} on a free based $A$-module $M$ is a $(-1)^k$-Hermitian sesquilinear form $\lambda \colon M \times M \to A$ together with a quadratic enhancement.
  A \emph{quadratic enhancement} of a form $\lambda \colon M \times M \to A$ is a function
  $\mu \colon M \to A/\{a - (-1)^k \overline{a} \, | \, a \in A\}$ such that
  \begin{enumerate}
    \item $\lambda(x,x) = \mu(x) + \overline{\mu(x)}$;
    \item $\mu(x+y) -\mu(x) - \mu(y) = \lambda(x,y)$;
    \item $\mu(ax) = a \mu(x) \overline{a}$;
  \end{enumerate}
 for all $x,y \in M$ and for all $a \in A$.

A hyperbolic quadratic form is a direct sum of standard hyperbolic forms, where the standard hyperbolic form $(H,\chi,\nu)$ is given by \[\Big(A \oplus A, \begin{pmatrix}0 & 1 \\ (-1)^k & 0  \end{pmatrix}, \nu((1,0)^T) = 0 = \nu((0,1)^T)\Big).\]

The even dimensional surgery obstruction group $L_{2k}(A)$ is defined to be the Witt group of nonsingular $(-1)^k$-Hermitian sesquilinear quadratic forms on free based $A$-modules, where addition in the Witt group is by direct sum, and the equivalence class of the hyperbolic forms is the identity element, where the equivalence relation is as follows.  Quadratic forms $(M,\lambda,\mu)$ and $(M',\lambda',\mu')$ are said to be equivalent if there are hyperbolic forms $(H, \chi, \nu)$ and $(H',\chi',\nu')$ such that there is an isomorphism of forms $(M \oplus H,\lambda \oplus \chi, \mu \oplus \nu) \cong (M' \oplus H',\lambda' \oplus \chi', \mu' \oplus \nu')$.
This completes the definition of $L_{2k}(A)$.
\end{definition}

For us $A$ will be the group ring $\Z[\pi]$ of some group $\pi$; initially $\pi$ will be $\Z^3$ so that we take $A = \Z[\Z^3] = \Lambda$. We omit the definition of the odd dimensional $L$-groups since they will only play a peripheral r\^{o}le in the proof of Theorem \ref{Theorem:borromean_homology_equiv}.

\begin{proof}[Proof of Theorem \ref{Theorem:borromean_homology_equiv}]

First, do surgery below the middle dimension \cite[Chapter~1]{Wall-Ranicki:1999-1} on $(W',F',e')$ to create a normal cobordism $(F\colon W \to T^3 \times I, e)$ which is $2$-connected i.e. $W$ is connected and $\pi_1(W) \cong \pi_1(T^3) \cong \Z^3$.  The induced map $F_*\colon \pi_2(W) \to \pi_2(T^3 \times I)$ is automatically surjective since $T^3$ is aspherical.

The Wall surgery obstruction \cite[Chapter~5]{Wall-Ranicki:1999-1} of the normal cobordism $(F\colon W \to T^3 \times I, e)$ is now defined in $L_4(\Z[\Z^3])$, to be given by the intersection form
\[\lambda_{W'} \colon H_2(W';\Lambda) \times H_2(W';\Lambda) \to \Lambda,\]
together with the quadratic enhancement
\[\mu \colon H_2(W;\Lambda) \to \Z[\Z^3]/\{a=\ol{a}\}\]
defined by counting the self intersections of an immersion of a sphere $S^2\looparrowright W$ representing an element of $H_2(W;\Lambda) \cong \pi_2(W)$, where the regular homotopy class of the immersion is fixed by the framing $e$ to be the unique class of immersions for which the induced trivialisation of $TS^2$ extends over the null-homotopy of $S^2$ in $T^3$.

The fact that the homology of the boundary $H_j(M_L;\Lambda) \cong H_j(T^3;\Lambda) \cong 0$ for $j = 1,2$, is used crucially here to see that the intersection form $\lambda_W$ is non-singular, as observed by the surgeon in the ``dialogue'' of \cite{Davis:2006-1}.

By \cite[Proposition~13B.8]{Wall-Ranicki:1999-1}, which is based on Shaneson's formula $L_n(\Z[\pi \times \Z]) \cong L_n(\Z[\pi]) \oplus L_{n-1}(\Z[\pi])$, when $\pi$ has trivial Whitehead group \cite{Shaneson:1969-1}, we have that:
\begin{align*} L_4(\Z[\Z^3]) & \cong \bigoplus_{i=0}^3 \begin{pmatrix} 3 \\ i \end{pmatrix} L_{4-i}(\Z) \cong L_4(\Z) \oplus \bigoplus_3 L_3(\Z) \oplus \bigoplus_3 L_2(\Z) \oplus L_1(\Z) \\  & \cong L_0(\Z) \oplus \bigoplus_3 L_2(\Z),\end{align*}
where the last isomorphism is by periodicity of the $L$-groups and the fact that the odd dimensional simply connected $L$-groups vanish.  The even dimensional simply-connected $L$-groups $L_{2k}(\Z)$ are computed~\cite{Kervaire-Milnor:1963-1}, when $k = 0 \;\text{mod}\, 2$, as \[\begin{array}{rcl} L_0(\Z) &\toiso  & \Z \\(M,\lambda,\mu) &\mapsto & \sigma(\R \otimes_\Z M, \Id \otimes \lambda)/8, \end{array}\] while for the dimensions where $k = 1 \;\text{mod}\, 2$ they are computed via \[\begin{array}{rcl} L_2(\Z) &\toiso &\Z_2 \\  (M,\lambda,\mu)& \mapsto & \Arf(\Z_2 \otimes_{\Z} M, \Id \otimes \lambda, \Id \otimes \mu). \end{array}\]

We need to see that we can make further alterations to $W$ in order to make the surgery obstruction vanish.

First, we take the connected sum with $-\sigma(W)/8$ copies of the $E_8$ manifold, namely the simply connected $4$-manifold which is constructed by plumbing disc bundles $D^2 \times D^2$ according to the $E_8$ lattice.  It turns out that the boundary of the resulting 4-manifold is the Poincar\'{e} homology sphere.  One then caps off with the contractible topological 4-manifold whose boundary is the Poincar\'{e} homology sphere~\cite[Corollary~9.3C]{Freedman-Quinn:1990-1}.  This produces the $E_8$ manifold, a closed topological 4-manifold.   It has a non-singular intersection form, with a quadratic enhancement induced from a normal map to $S^4$, and its signature is~$8$.  By a negative copy of this 4 manifold we of course mean the same manifold but with the opposite choice of orientation.  By making such a modification to $W$, we obtain a new normal map, which by abuse of notation we again denote by $(W,F,e)$, for which the obstruction in $L_0(\Z)$ is trivial.  Note that $W$ s
 till has fundamental group $\Z^3$ since $\pi_1(E_8 \text{ manifold}) \cong \{1\},$ and moreover $\partial W$ is unchanged.

Next, we may need to alter $W$ again, so that the three $\Arf$ invariant obstructions in $L_2(\Z)$ vanish.  For $i=1,2,3$, define maps
\[\qr_i \colon T^3 \times I = S^1 \times S^1 \times S^1 \times I \to S^1 \times S^1\]
which forget the $i$th $S^1$ factor and the $I$ factor.  Perform a homotopy of $F$ to ensure that $\qr_i\circ F$ is transverse to $\ast \in S^1 \times S^1$, and such that $F^{-1}(S^1 \times \{\ast\} \times \{\ast\} \times \{\partial I\}) \to S^1 \times \{\ast\} \times \{\ast\} \times \{\partial I\}$ is a homotopy equivalence (and similarly with the $\ast$ terms moved appropriately for $i=2,3$). This homotopy equivalence was already arranged in the proof of Lemma \ref{Lemma:existence-of-normal-cobordism}, when we saw that the elements of $\Omega_1^{\fr}$ can be removed.  Let $S_i$ be the surface
$(\qr_i \circ F)^{-1}(\ast)$; each surface has boundary $\partial S_i$ given by the meridian $\mu_i$ and the corresponding $S^1$ factor of~$T^3$.

Let $\pr_i \colon T^3 \times I = S^1 \times S^1 \times S^1 \times I \to S^1 \times I$ be the map which remembers the $i$th $S^1$ factor and the $I$ factor.  Making $F$ transverse to a point, $(\pr_i \circ F)^{-1}(\ast)$ is a surface $\Sigma_i \subset W$.  Since $F(S_i \cap \Sigma_i)$ is a single point and $F$ is of degree one, we can assume that $S_i$ and $\Sigma_i$ intersect in a single point.  By choosing different points in the $I$ factor, we can ensure that the $\Sigma_i$ are all distinct.

Now as in \cite{Davis:2006-1}, for each $i$ with nonzero surgery obstruction in the corresponding $L_2(\Z)$ summand of $L_4(\Z[\Z^3])$, remove a neighbourhood $\Sigma_i \times D^2$ of $\Sigma_i$ and replace it with $\Sigma_i \times \cl(S^1 \times S^1 \setminus D^2)$.  That is, replace the $D^2$ factor with a punctured torus, but define the framing on the torus to be the framing which yields $\Arf$ invariant one, that is the Lie framing on both $S^1$ factors.  Since $\Sigma_i$ is dual to $S_i$, this adds one to the $\Arf$ invariant of the element of $L_2(\Z)$ represented by $S_i$, and so changes the $\Arf$ invariant one summands to having $\Arf$ invariant zero.

After these alterations we have a normal map $(G' \colon V' \to T^3 \times I,k')$, with vanishing surgery obstruction.  Since the fundamental group $\Z^3$ is \emph{good} in the sense of Freedman (poly-cyclic groups are good~\cite[Theorem~5.1A]{Freedman-Quinn:1990-1}), the surgery sequence is exact in the topological category---see \cite[Theorem~11.3A]{Freedman-Quinn:1990-1}.  We can therefore find embedded two spheres representing a half-basis for $\pi_2(G')$, perform surgery, and obtain a topological $4$-manifold $V$ which is homotopy equivalent to $T^3 \times I$; in particular, $V$ is a homology cobordism between $M_L$ and $T^3$.

Moreover, the following diagram commutes.
\[\xymatrix{
\pi_1(M_L) \ar[r] \ar[d]_{f_*} & \pi_1(V) \ar[d]^{\cong} & \pi_1(T^3) \ar[l] \ar[d]^{\cong}_{\Id} \\
\pi_1(T^3) \ar[r]^-{\cong} & \pi_1(T^3 \times I) & \pi_1(T^3) \ar[l]_-{\cong}
}\]
Since the meridians~$\mu_i$ of~$L$ are mapped to standard generators of $\pi_1(T^3)$, an easy diagram chase shows that the homotopy classes of the meridians are preserved in the homology cobordism $V$.
\end{proof}

\section{Construction of links and grope concordance}
\label{section:construction-links-grope-concordance}

In this section we give constructions of certain links with a given
Milnor invariant, and construct grope concordances, using the methods of
\cite{Cochran:1990-1} and~\cite{Cha:2012-1}.

\subsection{Iterated Bing doubles with a prescribed Milnor invariant}
\label{subsection:links-with-given-mu-invariant}

Let $I$ be a multi-index with non-repeating indices with
length~$m:=|I|\ge 2$.  We describe a rooted binary tree $T(m)$
associated to~$m\ge 2$, which has $m$ leaves: the right subtree of the
root just consists of a single vertex, and the left subtree $T^{\dag}(m)$ is the
complete binary tree of height $h(m):=\lceil\log_2(m-1)\rceil$ with
the rightmost $2(m-h(m)-1)$ pairs of leaves (and edges ending at
these) removed.  (By convention, a binary tree is always embedded in
a plane with the root on the top.)  That is, $T^{\dag}(m)$ is a minimal
height binary tree with $m-1$ leaves.  For example, $T(m)$ for $m=7$
is shown in Figure~\ref{figure:symm-tree}.

\begin{figure}[htb]
  \def\blt{\vcenter to 0mm{\vss\hbox to 0mm{\hss$\bullet$\hss}\vss}}
  \def\bltl#1{\vtop{\hbox{$\vcenter to 0mm{\vss\hbox to
          0mm{\hss$\bullet$\hss}\vss}$}\hbox to
      0mm{\hss\raise1ex\hbox{\small$#1$}\hss}}}
  \def\bltlv#1{\vtop{\hbox to 0mm{$\vcenter to 0mm{\vss\hbox to
          0mm{\hss$\bullet$\hss}\vss}$}\vbox to 0mm{\vss\hbox to
        0mm{\hss\raise1ex\hbox{\small$#1$}\hss}}}}
  \[
  \xymatrix@M=0mm@W=0mm@H=0mm@R=2em@C=1.5em{
    &&&&&&&&&&&\blt \ar@{-}[lllld] \ar@{-}[rd] \\
    &&&&&&&\blt \ar@{-}[rrd]\ar@{-}[lllld]&&&&&\bltlv{7}\\
    &&&\blt \ar@{-}[rrd]\ar@{-}[lld]&&&&&&\blt \ar@{-}[rd]\ar@{-}[ld]\\
    &\blt \ar@{-}[rd]\ar@{-}[ld]&&&&\blt \ar@{-}[rd]\ar@{-}[ld]&&&\bltlv{5}&&\bltlv{6}\\
    \bltl{1}&&\bltl{2}&&\bltl{3}&&\bltl{4} \\
  }
  \]
  \caption{The tree $T(m)$ for $m=7$, labeled with $I=1234567$.}
  \label{figure:symm-tree}
\end{figure}

Following the proof of \cite[Theorem~7.2]{Cochran:1990-1}, a rooted
binary tree $T$ describes a link with components corresponding to the
leaves of~$T$. First, a complete binary tree of height one is
associated to a Hopf link.  If $T$ is obtained from $T'$ by attaching
two new leaves to a leaf $v$ of $T'$, then the link associated to $T$
is obtained from that of $T$ by Bing doubling the component
corresponding to~$v$.

Consider the link described by the tree~$T(m)$.  Labelling the leaves
of $T(m)$ from left to right with the multi-index $I$ (see
Figure~\ref{figure:symm-tree} for $I=1234567$), the components of the
link are ordered.  We denote this ordered link by~$L_I$.  Then, by
\cite[Theorem~8.1]{Cochran:1990-1}, the link $L_I$ has
$\ol{\mu}_{L_I}(I)=\pm 1$ and $\ol{\mu}_{L_I}(I') =0$ for $|I'| <|I|$.

\subsection{Satellite construction and grope concordance of links}
\label{subsection:satellite-construction-grope-concordance}

To construct links which are grope concordant, we employ the method
of~\cite[Section~4]{Cha:2012-1}.  We begin by giving the definition of
grope concordance.  The use of gropes in this context first appeared
in \cite{Cochran-Orr-Teichner:1999-1}.

\begin{definition}[\cite{Freedman-Teichner:1995-1}]
  A \emph{grope} is a pair (2-complex, base circle) of a certain type described below.
  A grope has a height $h \in \mathbb{N}$. For $h = 1$ a grope is precisely a
  compact oriented surface $\Sigma$ with a single boundary component
  which is the base circle. A grope of height $h+1$ is defined
  inductively as follows: let $\{\alpha_i\,|\, i = 1, \ldots, 2
  \cdot\text{genus}\}$ be a standard symplectic basis of circles
  for~$\Sigma$. Then a grope of height $h+1$ is formed by attaching
  gropes of height $h$ to each $\alpha_i$ along the base circles.

  An annular grope is defined by replacing the bottom stage surface
  by a surface with two boundary components.
\end{definition}

\begin{definition}[\cite{Cha:2012-1}, Definition~2.16]
  \label{Definition:grope-concordance}
  Two $m$-component links $L$ and $L'$ in $S^3$ are \emph{height $n$
    grope concordant} if there are $m$ framed annular gropes $G_i$ of
  height $n$, $i=1,\dots m$, disjointly embedded in $S^3 \times
  [0,1]$, with the boundary of $G_i$ the zero framed $i$th component
  of $L_i \subset S^3 \times \{0\}$ and $-L'_i \subset S^3 \times
  \{1\}$.
\end{definition}

As mentioned in the introduction we could also phrase our theorems in
terms of Whitney towers, but for simplicity of exposition we stick to
gropes.  See \cite[Section~2]{Cha:2012-1} for an exposition on gropes,
Whitney towers, and $n$-solvable cobordisms (Our
Section~\ref{section:amenable-signature} also contains a limited
discussion of $n$-solvable cobordisms).

We recall that a \emph{capped grope of height $k$} is a grope of
height $k$ together with 2-discs attached along each standard
symplectic basis curves of the top layer surfaces.  The attached
2-discs are called \emph{caps}, and the grope itself is called the
\emph{body}.  We always assume that a capped grope embedded in a
4-manifold is framed.

We denote the exterior of a link $L$ by~$X_L$.  If $L$ is a link in
$S^3$, $\eta$ is an unknotted circle in $S^3$ disjoint from $L$, and
$K$ is a knot, then we denote the satellite link of $L$ with axis
$\eta$ and companion $K$ by $L(\eta,K)$; this is the image of $L$
under the homeomorphism $X_{\eta} \cup_{\partial} X_K
\xrightarrow{\approx} S^3,$ where the glueing identifies the longitude
of $\eta$ with the meridian of $K$, and vice versa.

Following \cite[Definition~4.2]{Cha:2012-1}, we call $(L,\eta)$ a
\emph{satellite configuration of height $k$} if $L$ is a link in
$S^3$, $\eta$ is an unknotted circle in $S^3$ disjoint from $L$, and
the 0-linking parallel of $\eta$ in $X_\eta=X_\eta\times \{0\}$ bounds
a capped grope of height $k$ embedded in $X_\eta\times [0,1]$ with
body disjoint from $L\times[0,1]$.  The caps should be embedded in
$X_\eta\times [0,1]$ but may intersect $L \times [0,1]$.

Lemma~\ref{lemma:grope-concordance-for-iterated-satellite-construction}
stated below describes how iterated satellite constructions using
satellite configurations give us grope concordant links.  The setup is
as follows.  Fix~$n$.  (To obtain
Theorems~\ref{theorem:links-not-conc-hom-cobordant-0-surgeries}
and~\ref{theorem:links-diffeo-zero-surgeries-not-concordant}, set $h=n+2$.)
  Suppose that $(L_0,\eta)$ is a satellite configuration
of height $k\le n$.  (Later we will use the link $L_I$ described above
as~$L_0$.)  Suppose that $(K_i,\alpha_i)$ is a satellite configuration
of height one, with $K_i$ a slice knot, for $i=0,\ldots,n-k-1$.  Let
$J_0^j$ be the connected sum of $N_j$ copies of the knot described in
\cite[Figure~3.6]{Cochran-Teichner:2003-1}, where $\{N_j\}$ is an
increasing sequence of integers which will be specified later.
(Indeed these will be given in terms of the Cheeger-Gromov bound on
the $\rho$-invariants and, for the links of
Theorem~\ref{theorem:links-not-conc-hom-cobordant-0-surgeries}, in
terms of the Kneser-Haken bound on the number of disjoint non-parallel
incompressible surfaces.  See
Section~\ref{section:amenable-signature}, just before the proof of
Proposition~\ref{theorem:being-non-solvably-cobordant} and
Section~\ref{subsection:non-diffeomorphic-zero-surgery}, just before
Lemma~\ref{lemma:non-homeomorphic-zero-surgery}.)  Define $J_i^j :=
K_{i-1}(\alpha_{i-1},J_{i-1}^j)$ inductively for $i=1,\ldots, n-k$.
Finally define $L_j := L_0(\eta,J_{n-k}^j)$.

\begin{lemma}[{\cite[Proposition~4.7]{Cha:2012-1}}]
  \label{lemma:grope-concordance-for-iterated-satellite-construction}
  The link $L_j$ is height $n+2$ grope concordant to $L_0$ for all~$j$.
\end{lemma}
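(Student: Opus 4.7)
The plan is to reduce the lemma to two auxiliary claims that combine via iteration.

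\emph{Claim A.} For every $j$ and every $i \in \{0, 1, \ldots, n-k\}$, the knot $J_i^j$ is height $(i+2)$ grope concordant to the unknot $U$.

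\emph{Claim B.} If $(L, \eta)$ is a satellite configuration of height $k$ and $J$ is height $m$ grope concordant to $U$, then $L(\eta, J)$ is height $(k+m)$ grope concordant to $L$.

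Granting these, Claim A with $i = n-k$ combined with Claim B applied to $(L_0, \eta)$ with companion $J_{n-k}^j$ produces a height $k + (n-k+2) = n+2$ grope concordance from $L_0$ to $L_j = L_0(\eta, J_{n-k}^j)$, as desired.

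I would prove Claim A by induction on $i$. For the base case $i = 0$, the knot in \cite[Figure~3.6]{Cochran-Teichner:2003-1} is by construction height $2$ grope concordant to $U$, and grope concordance is respected by boundary connected sum (stack concordances side-by-side in $S^3 \times [0,1]$), so $J_0^j$, being a connected sum of copies of that knot, is also height $2$ grope concordant to $U$. For the inductive step, suppose $J_{i-1}^j$ is height $(i+1)$ grope concordant to $U$. Since $(K_{i-1}, \alpha_{i-1})$ is a height-$1$ satellite configuration, Claim B yields a height $(i+2)$ grope concordance between $K_{i-1} = K_{i-1}(\alpha_{i-1}, U)$ and $J_i^j = K_{i-1}(\alpha_{i-1}, J_{i-1}^j)$. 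Since $K_{i-1}$ is slice, it is grope concordant to $U$ of arbitrarily large height, and stacking the two concordances completes the induction.

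Claim B is the main obstacle, and I would establish it by constructing the required height $(k+m)$ annular gropes explicitly, following the template of \cite[Section~4]{Cha:2012-1}. Starting with the capped grope $G_\eta$ of height $k$ in $X_\eta \times [0,1]$ given by the configuration (with body disjoint from $L \times [0,1]$) and the height $m$ annular grope $H$ in $S^3 \times [0,1]$ between $J$ and $U$, the idea is to view the satellite operation as taking place in a one-parameter family, producing a cobordism from $L$ to $L(\eta, J)$ inside which the body of $G_\eta$ provides the first $k$ surface stages of an annular grope. The top-stage caps of $G_\eta$, which may meet $L \times [0,1]$ but otherwise lie in the region altered by the satellite construction, are then promoted to gropes of height $m$ by splicing in copies of $H$ wherever a cap threads through the tubular neighbourhood of $\eta$. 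The bookkeeping for framings, disjointness of the resulting bodies, and controlled position of the new caps is the technical heart of the argument; this is precisely what \cite[Proposition~4.7]{Cha:2012-1} establishes, and my proof would follow its structure closely.
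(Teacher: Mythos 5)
Your proposal is essentially correct and reflects the same essential content as the paper's proof, which simply invokes \cite[Proposition~4.7]{Cha:2012-1} with the Hopf link replaced by $L_0$. The arithmetic of your decomposition is consistent: the iteration of Claim~B (adding $1$ each time for the height-one configurations $(K_{i-1},\alpha_{i-1})$, starting from the base height $2$ furnished by the Arf-zero Cochran--Teichner knot, and finally adding $k$ for the configuration $(L_0,\eta)$) yields $k + (n-k) + 2 = n+2$, as required.

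Two caveats are worth noting, though neither amounts to a gap. First, the paper (following Cha) proves the iterated statement in one pass rather than by first isolating a single-satellite lemma (your Claim~B) and then inducting; your reorganisation is a valid repackaging of the same construction, but it is not literally what Cha's Proposition~4.7 says, so describing Claim~B as ``precisely what \cite[Proposition~4.7]{Cha:2012-1} establishes'' slightly misattributes. Second, your sketch of Claim~B's proof describes the caps of the $\eta$-grope as ``lying in the region altered by the satellite construction'' and ``threading through the tubular neighbourhood of $\eta$'' --- but by definition the caps live in $X_\eta \times [0,1]$, so they do not pass through $\nu(\eta)$; rather, the caps meet $L \times [0,1]$ transversely, and it is along these intersections with the companion strands that the annular grope for $J \sim U$ gets spliced in. Since you ultimately defer the details of Claim~B to Cha's argument, just as the paper does, this imprecision does not undermine the proposal, but the geometric picture as written would mislead a reader trying to fill in the splicing step.
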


\begin{proof} The same as the proof of
  \cite[Proposition~4.7]{Cha:2012-1}, except that $L_0$ replaces the
  Hopf link in the last sentence.
\end{proof}

The following observation on the satellite construction is useful.

\begin{lemma}\label{lemma:milnor-invariants-unchanged-satellite}
  If $L'=L(\eta,K)$ is obtained from $L$ by a satellite construction,
  then $L$ and $L'$ have the same Milnor
  $\ol{\mu}$-invariants.
\end{lemma}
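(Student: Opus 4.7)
The plan is to reduce the claim to Milnor's classical fact that the $\ol{\mu}$-invariants of a link are determined by the nilpotent quotients of its link group together with the meridian and longitude conjugacy classes. To that end, I would exhibit a degree one pinch map $p \colon X_{L'} \to X_L$ that is the identity on the piece common to both exteriors and that induces isomorphisms on every nilpotent quotient of $\pi_1$.

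Concretely, I would decompose $X_{L'} = X_{L\cup \eta}\cup_T X_K$ and $X_L = X_{L\cup \eta}\cup_T V$, where $V = \nu \eta$ is a solid torus and the satellite identification on $T = \partial V = \partial X_K$ exchanges $\mu_K \leftrightarrow \lambda_\eta$ and $\lambda_K \leftrightarrow \mu_\eta$. There is a standard degree one ``pinch'' map $p_K \colon X_K \to V$ that restricts to the identity on $T$; it can be constructed by obstruction theory from the abelianisation isomorphism $H_1(X_K) \cong \Z \cong H_1(V)$, using that $V$ is aspherical and that $\lambda_K$ is null-homotopic in~$V$. Gluing $p_K$ to $\Id_{X_{L\cup \eta}}$ along $T$ produces $p$. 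Since $H_1(X_K) \to H_1(V)$ is an isomorphism and $H_2(X_K) = 0 = H_2(V)$, a Mayer--Vietoris comparison of the two decompositions shows at once that $p_*$ is an isomorphism on $H_1$ and is surjective on $H_2$.

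Stallings' theorem on the lower central series then implies that $p$ induces an isomorphism on every nilpotent quotient of $\pi_1$. The $i$-th meridian and $0$-framed longitude of each component of $L$ already lie on $\partial \nu L_i \subset X_{L\cup \eta}$, and the $0$-framing depends only on the $S^3$-linking numbers of the $L$-components, which are unaffected by the satellite operation on the auxiliary unknot $\eta$; consequently $p$ preserves this peripheral data. Since $\ol{\mu}_L(I)$ is by definition read off as a Magnus-expansion coefficient of the $i_r$-th longitude when expressed as a word in the meridians in a free nilpotent quotient, and all of these data match under~$p$, we conclude $\ol{\mu}_L = \ol{\mu}_{L'}$. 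The step requiring the most care is verifying the construction of the pinch map $p_K$ with the prescribed boundary behaviour; once that is set up, the combination of Mayer--Vietoris, Stallings' theorem, and Milnor's definition is essentially automatic.
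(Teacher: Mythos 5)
Your proposal is correct and follows essentially the same route as the paper: you explicitly build the standard homology equivalence $X_{L'}\to X_L$ (pinch map on $X_K$ glued to the identity on $X_{L\cup\eta}$, checked via Mayer--Vietoris) that the paper simply cites from the literature, and then invoke Stallings' theorem and the fact that $\ol{\mu}$-invariants are determined by nilpotent quotients together with meridians and longitudes. The only soft spot is your justification that the $0$-framed longitudes are preserved --- the clean reason is that $\mu_\eta$ is identified with $\lambda_K$, which is null-homologous in $X_K$, so linking numbers of curves in $X_{L\cup\eta}$ are unchanged --- but this does not affect the validity of the argument.
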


\begin{proof}
  It is well known that a satellite construction $L'=L(\eta,K)$ comes
  with an integral homology equivalence $f\colon (X_{L'},\partial
  X_{L'}) \to (X_L,\partial X_L)$ which restricts to a homeomorphism
  on the boundary preserving longitudes and meridians (see e.g.\
  \cite[Proof of Proposition~4.8]{Cha:2007-1},
  \cite[Lemma~5.3]{Cha-Orr:2011-01}).  As in
  \cite[Lemma~2.1]{Cha-Friedl-Powell:2012-1}, by
  Stallings~\cite{Stallings:1965-1}, it follows that $f$ induces an
  isomorphism $\pi_1(X_L)/\pi_1(X_L)_q \cong
  \pi_1(X_{L'})/\pi_1(X_{L'})_q$ that preserves the classes of
  meridians and longitudes for any~$q$, and consequently $L$ and $L'$
  have identical $\ol{\mu}$-invariants.
\end{proof}

\subsection{Satellite configuration of iterated Bing doubles}
\label{subsection:satellite-configuration-of-iterated-Bing-doubles}

The goal of this section is to construct the examples needed for Theorem~\ref{theorem:links-diffeo-zero-surgeries-not-concordant}, and to show that they satisfy (\ref{item-thm-diff-zero-surg-one}) and (\ref{item-thm-diff-zero-surg-two}) of that theorem.
Now we consider again the link $L_I$ described in
Section~\ref{subsection:links-with-given-mu-invariant}.  Recall that
$k(m) := \lfloor \log_2 (m-1)\rfloor$ where $m=|I|$.  Let $\eta$ be
the zero framed longitude of the component of $L_I$ labelled with $m$,
namely the component of the original Hopf link that is never Bing
doubled in the construction of~$L_I$.

\begin{lemma}\label{lemma:properties-of-L_I-and-eta}
  \leavevmode\Nopagebreak
  \begin{enumerate}
  \item\label{item:properties-of-L_I-eta-one} The pair $(L_I,\eta)$ is
    a satellite configuration of height $k(m)$.
  \item\label{item:properties-of-L_I-eta-two} The curve $\eta$ is
    nonzero in $\pi_1(L_I)/\pi_1(L_I)_{m}$.
  \item\label{item:properties-of-L_I-eta-three} For any knot $K$, the
    link $L_I(\eta,K)$ has zero surgery manifold homeomorphic to the
    zero surgery manifold of~$L_I$, via a homeomorphism which
    preserves the homotopy classes of the meridians.
  \end{enumerate}
\end{lemma}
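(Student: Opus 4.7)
The plan is to exploit the iterated-commutator description of $\eta=\ell_m$ provided by the tree~$T(m)$: each Bing doubling of a component $K$ replaces the meridian of $K$ by the commutator of the meridians of its two children (e.g.\ in every nilpotent quotient of the ambient link group). Iterating this substitution along $T(m)$ expresses $\eta=\ell_m$ as an iterated commutator of $\mu_1,\dots,\mu_{m-1}$ whose bracket structure is isomorphic to the left subtree~$T^\dagger(m)$. Each of the three assertions then follows from this description.

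For~\ref{item:properties-of-L_I-eta-one}, I would convert the commutator expression into a capped grope for $\eta^0$ in $X_\eta\times[0,1]$ following the recipe used in \cite[Section~4]{Cha:2012-1}: realise each bracket by a genus-one commutator surface whose symplectic basis is the pair of inner sub-words, nest these surfaces recursively along $T^\dagger(m)$, and cap the top-level basis curves by the embedded meridian disks of components of $L_I$, each intersecting $L_I\times[0,1]$ transversally once. The body can be arranged in $X_{L_I}\times[0,1]$, disjoint from $L_I\times[0,1]$. The height of the resulting capped grope equals the shortest root-to-leaf path in~$T^\dagger(m)$; by the construction of $T^\dagger(m)$ as a minimal-height binary tree with $m-1$ leaves this length is $\lfloor\log_2(m-1)\rfloor=k(m)$. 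The main obstacle I foresee is the bookkeeping in the unbalanced case when $m-1$ is not a power of~$2$, where some care is needed to choose the symplectic bases on the deeper branches so that every branch is capped at the uniform height~$k(m)$.

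For~\ref{item:properties-of-L_I-eta-two}, I would invoke Cochran's realisation theorem \cite[Theorem~8.1]{Cochran:1990-1}, which gives $\ol{\mu}_{L_I}(I)=\pm 1$. By Milnor's identification of $\ol{\mu}$-invariants with coefficients in the Magnus expansion of the longitude, the non-vanishing of this invariant forces $\ell_m$ to be nonzero in $\pi_1(X_{L_I})_{m-1}/\pi_1(X_{L_I})_m$, so in particular $\eta=\ell_m\ne 0$ in $\pi_1(X_{L_I})/\pi_1(X_{L_I})_m$.

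For~\ref{item:properties-of-L_I-eta-three}, the key observation is that in $M_{L_I}$ the curve $\eta$, being the $0$-framed longitude of $K_m$, equals the boundary of the meridian disk~$D$ of the solid torus glued to $K_m$ during the zero surgery; thus $\eta$ bounds the embedded disk~$D$ in $M_{L_I}$. Consequently $B:=\nu(\eta)\cup\nu(D)$ is a $3$-ball in $M_{L_I}$ inside which $\eta$ sits as a standard unknot. The zero surgery $M_{L_I(\eta,K)}$ is obtained from $M_{L_I}$ by excising $\nu(\eta)$ and gluing in $X_K$ via the satellite identifications $\mu_K\leftrightarrow\lambda_\eta$ and $\lambda_K\leftrightarrow\mu_\eta$; this modification is supported in $B$, and being the satellite operation applied to a standard unknot inside~$S^3$, it returns a $3$-ball. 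Hence $M_{L_I(\eta,K)}\cong M_{L_I}$ via a homeomorphism which is the identity outside~$B$, and since the meridians of $L_I$ can be chosen disjoint from~$B$, their homotopy classes are preserved.
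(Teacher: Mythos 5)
Your proposal is essentially correct.  Parts \ref{item:properties-of-L_I-eta-one} and \ref{item:properties-of-L_I-eta-two} follow the same route as the paper: the paper also builds the capped grope inductively along the Bing-doubling tree (a thickened cap for $\eta$ at the Hopf-link stage, replaced by a genus-one capped surface at each doubling), and also deduces \ref{item:properties-of-L_I-eta-two} from $\overline{\mu}_{L_I}(I)=\pm 1$ via Milnor's interpretation of the invariant as a Magnus coefficient of the longitude.  The ``unbalanced tree'' bookkeeping you flag in \ref{item:properties-of-L_I-eta-one} is not a real obstacle and the paper dispatches it in one sentence: since a complete binary tree of height $k(m)$ embeds in $T^{\dag}(m)$, one simply stops the construction at depth $k(m)$ on every branch; the cap at a truncated branch is the old disc, which after the later doublings merely meets the link in more than one point, and caps are allowed to intersect $L\times[0,1]$ (alternatively, deeper capped branches can be surgered down to caps).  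Where you genuinely diverge from the paper is \ref{item:properties-of-L_I-eta-three}: the paper argues by Kirby calculus, sliding the parallel strands tied into $K$ over the zero-framed $2$-handle attached along the component parallel to $\eta$ to undo all crossings, whereas you observe that $\eta$, being the zero-framed longitude of the surgered component, bounds the meridian disc of the filling solid torus in $M_{L_I}$, so the infection is supported in a ball and replaces a ball by a ball.  Both arguments are valid; yours is more conceptual (and makes the meridian statement immediate, since the homeomorphism is the identity off the ball), while the paper's is more explicitly diagrammatic.  The one point you should make explicit in your version is the framing check: the framing on $\eta$ induced by the surgery meridian disc coincides with the $S^3$ zero framing $\lambda_\eta$ used in the satellite gluing (the disc framing agrees with the push-off of $\eta$ along $\partial\nu(K_m)$, which has linking number zero with $\eta$ because both are zero-framed longitudes), and the zero framings of the components of $L_I$ correspond to those of $L_I(\eta,K)$ under the infection identification; with these verified, Dehn filling $X_K$ along $\mu_K$ returns a solid torus/ball exactly as you claim.
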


We remark that
Lemma~\ref{lemma:properties-of-L_I-and-eta}~(\ref{item:properties-of-L_I-eta-two})
will be used in Section~\ref{section:amenable-signature}.

\begin{proof}
  Denote $L:=L_I$ for this proof.

  (1) We go back to the construction of $L$, and construct the grope
  as we construct $L$.  We begin with the Hopf link (i.e.\ $m=2$), and
  the curve $\eta$ as a longitude of $L_2$.  We also begin with a
  thickened cap $D^2 \times [-1,1]$, such that $\partial D^2 \times
  \{0\} = \eta$.  This intersects the other component of the Hopf link
  in a single point.

  Every time a component $K$ is Bing doubled in the construction of
  $L$, we arrange that one of the clasps lies in $D^2 \times [-1,1]$,
  and then replace the thickened cap that intersected $K$ with a genus
  one capped surface with a single boundary component, whose body
  surface misses the new Bing doubled components, and such that each
  cap intersects one of the two new components.  See
  Figure~\ref{figure:cap-capped-surface}, which is somewhat
  reminiscent of a figure in~\cite[Chapter~2.1]{Freedman-Quinn:1990-1}.

  \begin{figure}[htb]
    \includegraphics{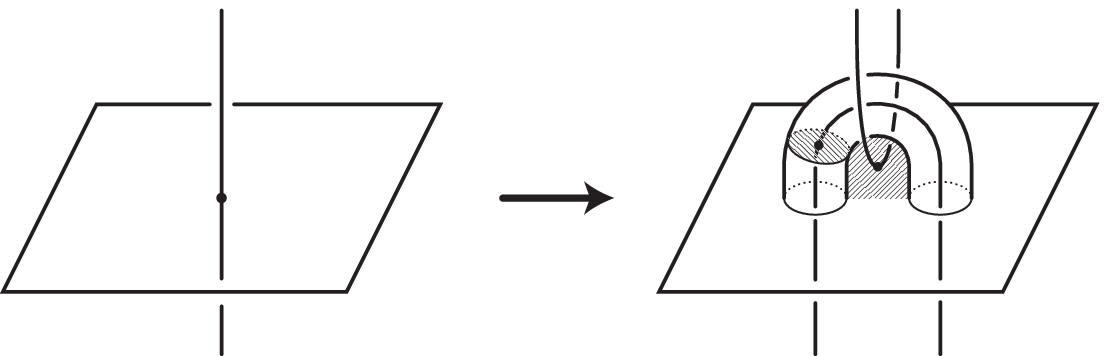}
    \caption{Replacing a cap with a capped surface}
    \label{figure:cap-capped-surface}
  \end{figure}

  Since a complete binary tree of height $k(m)$ can be embedded in
  $T(m)$, we obtain a symmetric embedded capped grope of the required
  height, with the body lying in the link exterior $X_L$ and the caps
  intersecting the link transversely.

  (2) The nonvanishing of the Milnor invariant $\ol{\mu}_{L}(I)$
  implies that all of the longitudes of $L$ are nontrivial in
  $\pi_1(X_L)/\pi_1(X_L)_{|I|}$.

  (3) A Kirby diagram for the 3-manifold $M_L$ given by zero framed
  surgery on $L$ can be produced by putting a $0$ next to every
  component of $L$.  If we perform a satellite construction with
  pattern $K$ and with $\eta$ as axis, this is equivalent to tying all
  the strands of $L$ which intersect a disc $D$, whose boundary is
  $\eta$, in the knot $K$, with framing zero.  In other words, replace
  the trivial string link in $D \times [0,1]$ with the string link
  obtained by taking suitably many parallel copies of~$K$.

  But we can make a crossing change of these parallel copies of $K$ at
  will, by performing handle slides, sliding the parallel strands
  over the zero framed 2-handle attached along the component parallel
  to~$\eta$.  This gives a Kirby presentation of a homeomorphic
  3-manifold.

  By making sufficiently many such crossing changes/handle slides, all
  the parallel strands which the satellite construction ties in the
  knot $K$ can be unknotted, recovering the link $L$.  Thus the zero
  surgery manifolds of the satellite link and the original link are
  homeomorphic.  It is easy to see that the homotopy classes of the
  meridians of $L$ are preserved under such
  homeomorphisms.\qedhere
\end{proof}

Now, let $n \ge k(|I|)=k(m)$.  Let $L_j$ be the links obtained by the
construction just before
Lemma~\ref{lemma:grope-concordance-for-iterated-satellite-construction},
using our $(L_I,\eta)$ as $(L_0,\eta)$, and using the Stevedore
satellite configuration described in \cite[Figure~6]{Cha:2012-1}, which for the reader's convenience is shown in Figure~\ref{figure:stevedore-conf}, as
the $(K_i,\alpha_i)$.  Then by
Lemma~\ref{lemma:grope-concordance-for-iterated-satellite-construction}
and
Lemma~\ref{lemma:properties-of-L_I-and-eta}~(\ref{item:properties-of-L_I-eta-one}),
the links $L_j$ are height $n+2$ grope concordant to the
link~$L_0=L_I$.

\begin{figure}[htb]
  \labellist
  \small\hair 0mm
  \pinlabel {$K_i$} at -8 30
  \pinlabel {$\alpha_i$} at 94 30
  \endlabellist
  \includegraphics{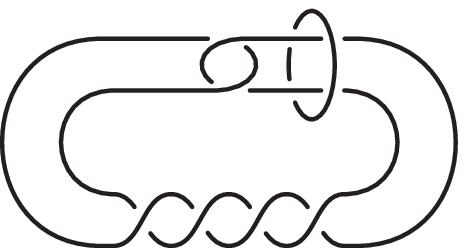}
  \caption{Stevedore satellite configuration $(K_i,\alpha_i)$.}
  \label{figure:stevedore-conf}
\end{figure}

Lemma~\ref{lemma:milnor-invariants-unchanged-satellite} shows that the
links $L_j$ satisfy
Theorem~\ref{theorem:links-diffeo-zero-surgeries-not-concordant}~(\ref{item-thm-diff-zero-surg-one}).
They also satisfy
Theorem~\ref{theorem:links-diffeo-zero-surgeries-not-concordant}~(\ref{item-thm-diff-zero-surg-two})
by
Lemma~\ref{lemma:properties-of-L_I-and-eta}~(\ref{item:properties-of-L_I-eta-three}).
We have also proved, in
Lemma~\ref{lemma:grope-concordance-for-iterated-satellite-construction},
the first part of
Theorem~\ref{theorem:links-diffeo-zero-surgeries-not-concordant}~(\ref{item-thm-diff-zero-surg-three}):
the links $L_j$ are mutually height $n+2$ grope concordant.  The
second part of
Theorem~\ref{theorem:links-diffeo-zero-surgeries-not-concordant}~(\ref{item-thm-diff-zero-surg-three}),
namely the failure of the links to be pairwise height $n+3$ grope
concordant, will be shown in Section~\ref{section:amenable-signature}.

\subsection{Examples with non-homeomorphic zero surgery manifolds}
\label{subsection:non-diffeomorphic-zero-surgery}

In order to produce examples satisfying
Theorem~\ref{theorem:links-not-conc-hom-cobordant-0-surgeries}~(\ref{item:thm-links-not-conc-hom-cob-I-non-diffeo}),
we alter the construction of
Sections~\ref{subsection:satellite-construction-grope-concordance} and
\ref{subsection:satellite-configuration-of-iterated-Bing-doubles} to
give examples with non-homeomorphic zero surgery manifolds.  We
consider the case of $m=3$ and $I=123$ only.  Then the link $L := L_I$
described in Section~\ref{subsection:links-with-given-mu-invariant} is
the Borromean rings.  Let $\eta$ be the simple closed curve in $S^3\sm
L$ shown in Figure~\ref{figure:borromean-conf}; $x$, $y$, and $z$
denote the components of~$L$.

\begin{figure}[htb]
  \labellist
  \small\hair 0mm
  \pinlabel {$x$} at 105 184
  \pinlabel {$y$} at 185 160
  \pinlabel {$z$} at 0 73
  \pinlabel {$\eta$} at 87 162
  \endlabellist
  \includegraphics{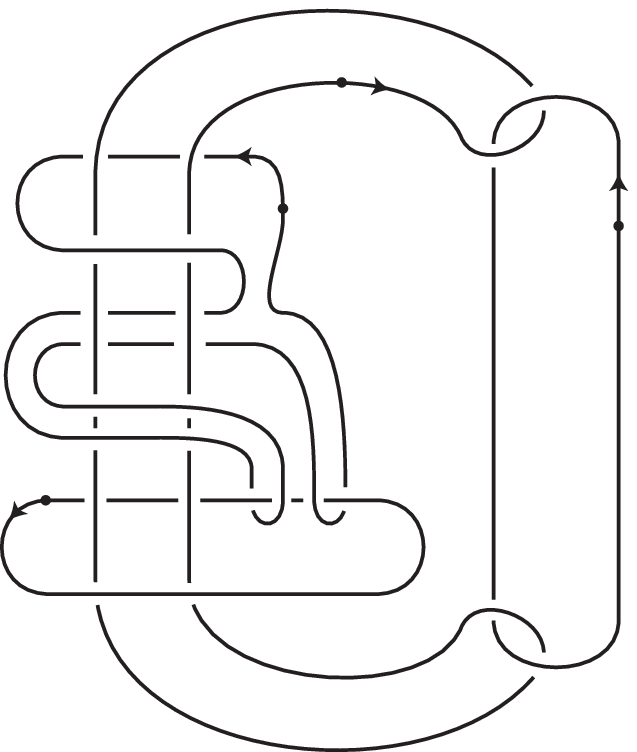}
  \caption{A satellite configuration on the Borromean rings}
  \label{figure:borromean-conf}
\end{figure}

The pair $(L,\eta)$ also has two of the properties stated in
Lemma~\ref{lemma:properties-of-L_I-and-eta}, for $m=3$:

\begin{lemma}\leavevmode\Nopagebreak
  \begin{enumerate}
  \item The pair $(L, \eta)$ is a satellite configuration of height
    one.
  \item In $\pi_1(X_L)$, $\eta=[x,y][[x,y],x]$, where $x$, $y$, and
    $z$ are the Wirtinger generators corresponding to the dotted arcs
    in Figure~\ref{figure:borromean-conf}.  Also, $\eta$ is nontrivial
    in $\pi_1(X_L)/\pi_1(X_L)_3$.
  \end{enumerate}
\end{lemma}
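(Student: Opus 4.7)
My plan is to prove (2) first by a direct computation in the fundamental group, and then to derive (1) by converting the algebraic commutator decomposition of $\eta$ into an explicit embedded capped surface.

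For the formula $\eta = [x,y][[x,y],x]$ in part (2), I would choose a base point on $\eta$, orient it, and read off the Wirtinger word by following $\eta$ through the diagram in Figure~\ref{figure:borromean-conf}, inserting a conjugate of $x$, $y$, or $z$ each time $\eta$ crosses beneath a labelled arc. The resulting word should simplify to $[x,y][[x,y],x]$ after free reduction. For the nontriviality statement, I would use that the Borromean rings have vanishing pairwise linking numbers, so in the reduced Wirtinger presentation $\langle x,y,z \mid [\mu_i, \lambda_i^W] = 1\rangle$ each Wirtinger longitude $\lambda_i^W$ lies in the commutator subgroup $F_2$ of the free group $F := F(x,y,z)$. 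Hence each relator $[\mu_i, \lambda_i^W]$ lies in $[F, F_2] = F_3$, and the canonical map $F/F_3 \to \pi_1(X_L)/\pi_1(X_L)_3$ is an isomorphism. Since $[[x,y],x]$ is a triple commutator it vanishes in $F/F_3$, giving $\eta \equiv [x,y]$ modulo $\pi_1(X_L)_3$; as $F_2/F_3$ is free abelian on $\{[x,y],[y,z],[z,x]\}$, this class is nonzero.

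For part (1), the expression of $\eta$ as a product $[x,y]\cdot[[x,y],x]$ of two commutators in $\pi_1(X_L)$ provides a genus $2$ Seifert surface $\Sigma$ for the $0$-framed parallel of $\eta$ embedded in $X_L$: build $\Sigma$ as the boundary connected sum of two once-punctured tori, the first with a symplectic basis representing $(x,y)$ and boundary word $[x,y]$, the second with symplectic basis representing $([x,y],x)$ and boundary $[[x,y],x]$. Push $\Sigma$ slightly into $X_L \times [0,1]$ so that its body lies in $X_\eta \times [0,1]$, disjoint from $L \times [0,1]$. Each of the four symplectic basis curves is a product of meridians of $L$ or commutators thereof, hence lies in the commutator subgroup of $\pi_1(X_L)$ and is null-homologous in $X_\eta$; since $\pi_1(X_\eta) \cong \Z$, null-homologous is equivalent to null-homotopic, so each basis curve bounds an immersed disk in $X_\eta$. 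Pushing these disks into $X_\eta \times (0,1]$ and applying general position in a $4$-manifold makes them embedded and mutually disjoint, at the cost only of intersections with $L \times [0,1]$, which the definition of a satellite configuration allows; these are the caps of the height-$1$ grope.

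The main technical point will be arranging that the symplectic basis on $\Sigma$ realises precisely the four prescribed conjugacy classes in $\pi_1(X_L)$; once that is done, the remaining verifications are routine applications of $4$-dimensional general position together with the fact that $\eta$ has linking number zero with each component of $L$, which is automatic from $\eta$ lying in the commutator subgroup.
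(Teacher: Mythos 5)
Your treatment of part (2) is essentially the same as the paper's: read the Wirtinger word off the diagram to get $\eta=[x,y][[x,y],x]$, then use the Milnor--Stallings fact that $\pi_1(X_L)/\pi_1(X_L)_3\cong F/F_3$ to conclude $[[x,y],x]$ dies and $[x,y]$ survives. Your extra justification of that isomorphism via the longitudes lying in $F_2$ is fine, and the paper simply cites Milnor and Stallings for it.

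Your approach to part (1), however, is genuinely different from the paper's and has real gaps. The paper builds the capped grope \emph{geometrically}: $\eta$ is unknotted, so it bounds an obvious disc in $S^3$; tubing that disc along the two components of $L$ that pierce it (four intersection points) gives an explicit genus-two surface $V\subset X_L$, and the four caps are exhibited directly in Figure~\ref{figure:borromean-satellite-grope}. This requires no passage through the algebraic decomposition and no appeal to 4-dimensional general position. You instead try to reverse-engineer the surface from the formula $\eta=[x,y][[x,y],x]$, and here several steps break down. First, the formula only produces a \emph{map} of a genus-two surface into $X_L$, not an embedding; you never explain how to promote that map to an embedded body in $X_\eta\times[0,1]$. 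Second, your stated reason that the basis curves are null-homologous in $X_\eta$, namely that they ``lie in the commutator subgroup of $\pi_1(X_L)$,'' is false for the curves representing the meridians $x$ and $y$ (a meridian is not in $[\pi_1(X_L),\pi_1(X_L)]$), and in any case lying in the commutator subgroup of $\pi_1(X_L)$ has no bearing on linking with $\eta$. (What is actually true, and should be said, is that any interior curve on a Seifert surface for $\eta$ can be pushed off that surface, hence links $\eta$ trivially.) Third, and most seriously, ``applying general position in a $4$-manifold makes them embedded and mutually disjoint'' is simply wrong: two discs in a $4$-manifold generically intersect in isolated points, not in the empty set, since $2+2=4$; similarly a single generic disc has transverse double points. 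Since the capped grope is required to be embedded in $X_\eta\times[0,1]$, general position alone does not deliver what you need. These issues are exactly what the paper's explicit tube-and-cap construction avoids, which is why the paper does (1) first, by picture, rather than deriving it from (2).
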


Here $[a,b]$ denotes the commutator $aba^{-1}b^{-1}$.

\begin{proof}
  (1) Tubing the obvious disc bounded by $\eta$ along the components
  of $L$ that intersect it, we obtain a genus two surface $V$ with
  boundary $\eta$ which is shown in
  Figure~\ref{figure:borromean-satellite-grope}.  This is the body of
  the desired capped grope.  The whole capped grope is the body taken
  together with the four caps shown in
  Figure~\ref{figure:borromean-satellite-grope} as shaded discs.

  \begin{figure}[htb]
    \includegraphics{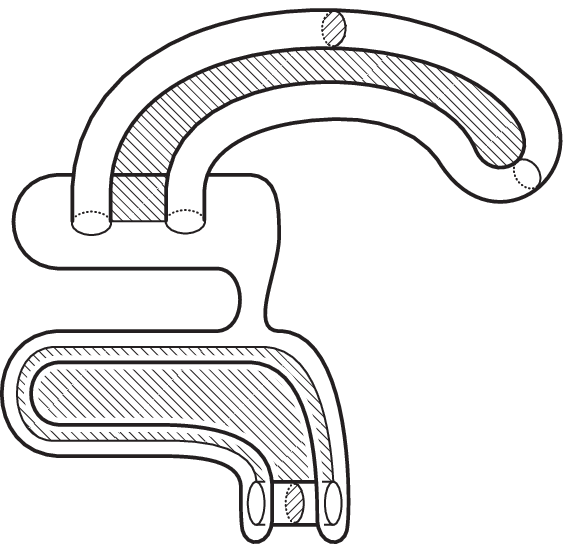}
    \caption{The capped grope bounded by~$\eta$.}
    \label{figure:borromean-satellite-grope}
  \end{figure}

  (2) The claim that $\eta=[x,y][[x,y],x]$ follows from a
  straightforward computation in terms of the Wirtinger generators,
  reading undercrossings of $\eta$ starting from the dot on $\eta$ in
  Figure~\ref{figure:borromean-conf}.  Since $L$ has vanishing linking
  number, due to Milnor~\cite{Milnor:1957-1} (see also
  Stallings~\cite{Stallings:1965-1}), $\pi_1(X_L)/\pi_1(X_L)_3$ is
  isomorphic to $F/F_3$ where $F$ is the free group generated by $x$,
  $y$, and~$z$.  Consequently, $[[x,y],x] \in \pi_1(X_L)_3$ and
  $[x,y]\notin \pi_1(X_L)_3$.  From this the second conclusion
  follows.
\end{proof}

As in
Section~\ref{subsection:satellite-configuration-of-iterated-Bing-doubles},
we apply the construction described just before
Lemma~\ref{lemma:grope-concordance-for-iterated-satellite-construction},
using our $(L,\eta)$ as the seed link $(L_0,\eta)$ and using the
Stevedore satellite configuration described in
\cite[Figure~6]{Cha:2012-1} (see our
Figure~\ref{figure:stevedore-conf}) as $(K_i,\alpha_i)$ for
$i=0,\ldots,n-2$ as above.  Let the resulting links be the~$L_j$.
Then by
Lemma~\ref{lemma:grope-concordance-for-iterated-satellite-construction},
the $L_j$ are height $n+2$ grope concordant to the Borromean rings
$L$, so these satisfy the first part of
Theorem~\ref{theorem:links-not-conc-hom-cobordant-0-surgeries}~(\ref{item:thm-links-not-conc-hom-cob-III-grope-concordance}).
The second part of
Theorem~\ref{theorem:links-not-conc-hom-cobordant-0-surgeries}~(\ref{item:thm-links-not-conc-hom-cob-III-grope-concordance}),
on the failure of the links to be pairwise height $n+3$ grope
concordant, will be shown in Section~\ref{section:amenable-signature}.

Furthermore, the links $L_j$ satisfies the hypothesis of
Theorem~\ref{Theorem:borromean_homology_equiv}.  First note that since
our satellite operation does not change the knot type of the
components, $L_j$ has unknotted components.  In particular the Arf
invariants of the components vanish.  Recall, from the proof of
Lemma~\ref{lemma:milnor-invariants-unchanged-satellite}, that there is
a homology equivalence $f\colon X_{L_j} \to X_{L_0}$ obtained from the
satellite construction $L_j = L_0(\eta, J_{n-1}^j)$; indeed $f$ is
obtained by glueing the identity map of $X_{L_0\sqcup \eta}$ with the
standard homology equivalence $(X_{J_{n-1}^j},\partial X_{J_{n-1}^j})
\to (S^1\times D^2, S^1\times S^1)$ along $S^1\times S^1$.  Since our
curve $\eta \subset S^3-L_0$ lies in the commutator subgroup of
$\pi_1(S^3-L_0)$, $f$ is indeed a $\Lambda$-homology equivalence
$X_{L_j} \to X_{L_0}$, by a Mayer-Vietoris argument.  Filling it in
with 3 solid tori, we obtain a $\Lambda$-homology equivalence $M_{L_j}
\to T^3=M_{L_0}$ as desired.  Therefore, by applying
Theorem~\ref{Theorem:borromean_homology_equiv}, it follows that the
links $L_j$ satisfy
Theorem~\ref{theorem:links-not-conc-hom-cobordant-0-surgeries}~(\ref{item:thm-links-not-conc-hom-cob-II-top-hom-cobordant}).

 We need to confirm that the $L_j$ satisfy
Theorem~\ref{theorem:links-not-conc-hom-cobordant-0-surgeries}~(\ref{item:thm-links-not-conc-hom-cob-I-non-diffeo}),
namely the $M_{L_j}$ are not homeomorphic.  The underlying idea is as
follows.  Recall that $L_j$ is defined by a satellite construction,
starting with a knot~$J^j_0$.  In many cases, the JSJ pieces of the
exterior of $J^j_0$ become parts of the JSJ decomposition of
$M_{L_j}$, so that the $M_{L_j}$ have distinct JSJ decompositions.
But a complete proof of this seems to require complicated arguments (a
technical issue is that an essential torus might not be parallel to a
JSJ torus, because of Seifert fibred pieces).  In order to avoid
these complications from JSJ decompositions, we will present a simpler
argument using only the number of incompressible tori; this is enough
for our purpose.

We need the following.  The Kneser-Haken finiteness
theorem~\cite{Haken:1961-1} states that for each 3-manifold $M$, there
is a bound, say $C_{KH}(M)$, on the number of disjoint pairwise
non-parallel incompressible surfaces that can be embedded in~$M$.
Recall that the knot $J_0^j$ used in the construction of the link
$L_j$ is a connected sum of $N_j$ knots, where $\{N_j\}$ was an
increasing sequence to be specified (see the paragraph before
Lemma~\ref{lemma:grope-concordance-for-iterated-satellite-construction}).
Here is the first requirement on the~$N_j$: we choose the $N_j$
inductively in such a way that $N_j>\max\{C_{KH}(M_{L_k})\mid
k=0,1,\ldots,j-1\}$.

\begin{lemma}
  \label{lemma:non-homeomorphic-zero-surgery}
  The zero surgery manifolds $M_{L_i}$ and $M_{L_j}$ are not
  homeomorphic for $i \neq j$.
\end{lemma}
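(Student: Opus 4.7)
The plan is to distinguish the zero surgery manifolds by counting pairwise disjoint, pairwise non-parallel incompressible tori and invoking the Kneser--Haken bound. Since $J_0^j$ is the connected sum of $N_j$ copies of a non-trivial (prime) knot, its exterior $X_{J_0^j}$ carries at least $N_j-1$ classical ``swallow--follow'' tori, one for each connected-sum sphere; these are well known to be pairwise disjoint, pairwise non-parallel, and incompressible in $X_{J_0^j}$, since each side of each torus is a non-trivial knot exterior.

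Next I would propagate these tori through the construction. At each step of the iterated satellite $J_i^j = K_{i-1}(\alpha_{i-1},J_{i-1}^j)$, the exterior $X_{J_{i-1}^j}$ embeds as a codimension-zero submanifold of $X_{J_i^j}$ (the satellite operation only alters a tubular neighborhood of $\alpha_{i-1}$, which lies outside the previous knot exterior); similarly $X_{J_{n-k}^j}$ embeds in $X_{L_j}$ via $L_j = L_0(\eta,J_{n-k}^j)$, and $X_{L_j}$ embeds in $M_{L_j}$ as the complement of the zero-framing solid tori. Hence the $N_j-1$ swallow--follow tori sit inside $M_{L_j}$. Once this is known, the inductive choice $N_j > \max\{C_{KH}(M_{L_k})\mid k<j\}$ (adjusted by a fixed constant to absorb the $-1$ if needed, since the $N_j$ are free to grow) yields
\[
C_{KH}(M_{L_j}) \;\geq\; N_j-1 \;>\; C_{KH}(M_{L_i})
\]
for all $i<j$, and so $M_{L_i}$ and $M_{L_j}$ cannot be homeomorphic.

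The main technical work, and the step where care is needed, is to verify that the swallow--follow tori remain incompressible and pairwise non-parallel all the way up in $M_{L_j}$. For incompressibility, a hypothetical compressing disk $D\subset M_{L_j}$ of such a torus $T$ can be made transverse to the nested torus boundaries $\partial X_{J_0^j}\subset\partial X_{J_1^j}\subset\cdots\subset\partial X_{J_{n-k}^j}$ and to the surgery tori; incompressibility of each of these boundary tori (standard for non-trivial knot exteriors in the satellite tower, and for zero-framed surgery tori in the link exterior as long as the companion is knotted) lets one innermost-disc an intersection curve of $D$ with a boundary torus, eventually pushing $D$ into $X_{J_0^j}$ and contradicting the known incompressibility of $T$ there. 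For non-parallelism, two such tori cobounding a product region $T^2\times I$ in $M_{L_j}$ can by the same innermost-disc/outermost-arc cut-and-paste argument be shown to cobound a product region inside $X_{J_0^j}$, again contradicting the non-parallelism of the swallow--follow tori in the connected-sum exterior.

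The main obstacle is the cut-and-paste bookkeeping in the last paragraph, in particular ensuring that each intermediate boundary torus is incompressible in the ambient manifold after the subsequent satellite and surgery operations. This reduces to checking that at each stage the companion knot is non-trivial and the axis curve is non-trivial in the appropriate quotient of $\pi_1$, both of which hold by construction (for the axes see Lemma~\ref{lemma:properties-of-L_I-and-eta}\,(\ref{item:properties-of-L_I-eta-two}), and the Stevedore axes $\alpha_i$ have an analogous non-triviality property); given this, the standard toral incompressibility results for satellite knots and for zero surgery on non-trivial links apply.
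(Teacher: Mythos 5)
Your overall strategy is the same as the paper's: produce at least $N_j$ pairwise disjoint, pairwise non-parallel incompressible tori in $M_{L_j}$ coming from the connected-sum decomposition of $J_0^j$, and then invoke Kneser--Haken finiteness together with the inductive growth of $N_j$. The propagation of the swallow--follow tori up the chain of knot exteriors $X_{J_0^j}\subset X_{J_1^j}\subset\cdots\subset X_{J_{n-k}^j}$, and from there into $X_{L_j}$ via the satellite on $\eta$, is also in line with the paper, and the standard innermost-disc bookkeeping you describe for tori inside knot exteriors is fine.

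The genuine gap is at the last step, where $X_{L_j}$ is Dehn filled to form $M_{L_j}$. Write $M_{L_j}=Y\cup_T X$ with $X:=X_{J_{n-k}^j}$ and $Y:=M_{L_0}\setminus\nu(\eta)=T^3\setminus\nu(\eta)$. For the tori carried by $X$ to remain incompressible in $M_{L_j}$, you need $T=\partial Y$ to be incompressible \emph{in $Y$}, and nothing you cite establishes this. Lemma~\ref{lemma:properties-of-L_I-and-eta}\,(\ref{item:properties-of-L_I-eta-two}) (and its Borromean analogue giving $\eta=[x,y][[x,y],x]$) records that $\eta$ is non-trivial in $\pi_1(X_{L_0})/\pi_1(X_{L_0})_m$ --- a statement about the \emph{link exterior}, before the zero-framed filling. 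After the filling, $\pi_1(M_{L_0})=\pi_1(T^3)\cong\Z^3$ is abelian and $\eta$, being a commutator, becomes null-homotopic, so the non-triviality you appeal to evaporates. There is no ``standard toral incompressibility result for zero surgery'' that covers this situation; indeed, whether the satellite torus survives the filling is exactly the delicate point. The paper's proof handles it as follows: a compressing disc in $Y$ would have to realize the zero-framing longitude $\eta'$ of $\eta$ (because the meridian of $\eta$ generates a free summand of $H_1(Y)\cong\Z^4$), and then an explicitly constructed representation $\pi_1(Y)\to\SL(2,\Z_5)$ --- found by computer search --- sends $\eta'$ to a non-identity matrix, so $\eta'$ is not null-homotopic in $Y$ and $T$ does not compress. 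Your argument needs some replacement for this step; without it the claim that the swallow--follow tori are still incompressible in $M_{L_j}$ is unsupported.
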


\begin{proof}
  Recall that $M_{L_0} = M_L$ is the 3-torus~$T^3$.  Consider
  $Y:=M_L\sm \nu(\eta)$, where $\nu(\eta)$ is an open tubular
  neighbourhood of~$\eta$.  For notational convenience, denote the
  exterior of $J^j_{n-1}$ by $X:= X_{J^j_{n-1}}$.  The 3-manifold
  $M_{L_j}$ is obtained by glueing $Y$ and $X$ along their boundaries.
  Let $T=\partial Y=\partial X$ be the common boundary torus.  Note
  that $M_{L_0}$ can also be described in the same way, using
  $J^0_{n-1}:=$ unknot; in this case, the torus $T$ is compressible in
  $M_{L_0}$ since $X$ is a solid torus.

  \begin{claim}
    For $j\ge 1$, the torus $T$ is incompressible in~$Y$.
  \end{claim}

  Using the claim, we will show that the 3-manifolds $M_{L_j}$ are not
  pairwise homeomorphic.  Suppose $j\ge 1$.  Since the knot
  $J_{n-1}^j$ is obtained from an iterated satellite construction with
  the first stage knot $J_0^j$ a connected sum of $N_j$ nontrivial
  knots, the exterior $X$ of $J_{n-1}^j$ has at least $N_j$
  incompressible tori, including the boundary~$T$.  Since
  $M_{L_j}=Y\cup_T X$ and $T$ is incompressible in $Y$, it follows
  that there are $N_j$ non-parallel incompressible tori in~$M_{L_j}$.
  For any $k<j$, since $N_j > C_{KH}(M_{L_k})$, it follows that
  $M_{L_j}$ is not homeomorphic to~$M_{L_k}$.

  Now, to complete the proof, we will show the claim.  If there is an
  essential curve on $T$ which bounds a disc in $Y$, then it must be a
  zero-linking longitude, say $\eta'$, of $\eta$, since the meridian
  of $\eta$ is a generator of $H_1(Y\sm \eta)=\Z^4$.  By the following
  lemma, we have a contradiction.
\end{proof}

\begin{lemma}
  The class of $\eta'$ is nontrivial in the fundamental group
  $\pi_1(Y\sm\eta)$.
\end{lemma}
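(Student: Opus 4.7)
The plan is to realize $\pi_1(Y)$ as a quotient of the link group of $L\cup\eta\subset S^3$ and then detect the class of $\eta'$ in a suitable nilpotent quotient. Since $M_L$ is obtained from $X_L$ by attaching $2$-handles along $\lambda_1,\lambda_2,\lambda_3$ together with a $3$-handle, we have
\[
\pi_1(Y) \;\cong\; \pi_1(X_{L\cup\eta}) \,/\, \langle\langle \lambda_1,\lambda_2,\lambda_3 \rangle\rangle,
\]
and the zero-linking longitude $\eta'$ is the image of the class $[x,y][[x,y],x] \in \pi_1(X_{L\cup\eta})$. All pairwise linking numbers of $L\cup\eta$ vanish (the Borromean components have pairwise linking number zero, and $\eta$ is a commutator word, hence nullhomologous in $X_L$), so Milnor's $\ol\mu$-invariants of $L\cup\eta$ are well-defined integers.

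The key step is to use the Magnus expansion of $\eta'=[x,y][[x,y],x]$ to compute the relevant Milnor invariants of $L\cup\eta$. The Magnus series of this word begins $1+X_xX_y - X_yX_x+\cdots$, which yields $\ol\mu_{L\cup\eta}(xy\eta) = \pm 1$, while $\ol\mu_{L\cup\eta}(iz\eta)=0$ for every $i\in\{x,y,z\}$ because no $X_z$ appears; combined with the Borromean $\ol\mu_L(123)=\pm 1$, Milnor's formula for longitudes then pins down the classes of $\lambda_1,\lambda_2,\lambda_3$ inside the $3$-step nilpotent quotient $\pi_1(X_{L\cup\eta})/\pi_1(X_{L\cup\eta})_4$.

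Finally, I would verify inside $\pi_1(X_{L\cup\eta})/\pi_1(X_{L\cup\eta})_4$ that the element $[x,y][[x,y],x]$ does not belong to the normal closure of $\lambda_1,\lambda_2,\lambda_3$. Modulo $\pi_1(X_{L\cup\eta})_4$ this normal closure is generated by the $\lambda_i$ together with the weight-$3$ commutators $[g,\lambda_i]$ for $g\in\{x,y,z,\mu_\eta\}$, so the question reduces to a finite linear algebra calculation in the free abelian group of Hall basic commutators of weight at most three on the four meridians. I expect the non-vanishing of $\ol\mu_{L\cup\eta}(xy\eta)$ to provide exactly the obstruction that keeps the $[[x,y],x]$ piece of $\eta'$ alive. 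The main difficulty will be bookkeeping all weight-$3$ relations and confirming that no combination can simultaneously kill $[[x,y],x]$ while remaining compatible with all three longitude constraints.
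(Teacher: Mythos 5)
Your strategy---realise $\pi_1(Y)$ as $\pi_1(X_{L\cup\eta})/\langle\langle\lambda_1,\lambda_2,\lambda_3\rangle\rangle$ and detect $\eta'$ in the nilpotent quotient modulo $\pi_1(X_{L\cup\eta})_4$---is genuinely different from the paper's proof, which writes down an explicit Wirtinger presentation of $\pi_1(Y\sm\eta)$ (24 generators, the 24 Wirtinger relators plus 3 surgery relators), expresses $\eta'$ as a word, and exhibits a representation to $\SL(2,\Z_5)$, found by computer search, under which all relators die but $\rho(\eta')\neq I$. However, your proposal has a genuine gap at its central step. Modulo weight~3, the longitude of the third component of $L\sqcup\eta$ is $\lambda_3\equiv[x,y]^{\pm1}$ with no $w$-terms: the coefficient $\pm1$ is exactly $\ol\mu_L(123)$, and the potential $[x,w]$, $[y,w]$ contributions vanish because (by cyclic symmetry) they equal coefficients of monomials containing $X_z$ in the expansion of $\eta'=[x,y][[x,y],x]$, which has no $z$-content. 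Since the zero surgery kills $\lambda_3$ together with all its conjugates, the normal closure you must quotient by contains, modulo $F_4$, every bracket $[v,[x,y]]$ with $v\in\{x,y,z,w\}$ --- in particular it contains $[[x,y],x]$, which is precisely the ``weight-3 piece'' you expect to stay alive. Equivalently, $\eta'\equiv\lambda_3^{\pm1}$ modulo $\pi_1(X_{L\cup\eta})_3$, so the nonvanishing of $\ol\mu_{L\cup\eta}(124)$ is cancelled exactly by the surgery relation corresponding to $\ol\mu_L(123)=\pm1$; the linear algebra you describe, run with only the length-3 data you assemble, yields no obstruction at all.

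Whether $\eta'$ survives in $\pi_1(Y)/\pi_1(Y)_4$ therefore hinges on the weight-3 parts of all four longitudes of the specific four-component link $L\sqcup\eta$, i.e.\ on its length-4 Milnor invariants (including ones involving the index of $\eta$, possibly repeated), none of which you compute and which are not determined by the class of $\eta$ modulo $\pi_1(X_L)_3$; your claim that the length-3 data ``pins down'' $\lambda_1,\lambda_2,\lambda_3$ sufficiently is an overstatement. You would also need to include Milnor's relation $[w,\lambda_\eta]$ (with $w$ the meridian of $\eta$) in the presentation of the nilpotent quotient: omitting relations only proves nontriviality in a larger group that surjects onto the one you care about. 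It is not even clear that any nilpotent quotient detects $\eta'$ --- which is presumably why the paper abandons nilpotent invariants at this point and instead verifies nontriviality via a finite, non-nilpotent $\SL(2,\Z_5)$ representation. As written, your argument does not establish the lemma.
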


\begin{proof}
  We consider a Wirtinger presentation of $\pi_1(Y\sm\eta)$ given as
  follows: it has 24 generators denoted by $x_1,\ldots,x_{24}$
  associated to arcs in Figure~\ref{figure:borromean-conf}.  Here
  $(x_1,\ldots,x_{10})$, $(x_{11},x_{12})$, $(x_{13},\ldots,x_{16})$, and
  $(x_{17},\ldots,x_{24})$ are those associated to the arcs of the
  components $x$, $y$, $y$, and $\eta$, respectively.  In each
  component, the arc with a dot on it is the first one, and other arcs
  are ordered along the orientation.  There are 27 relators:
  \begin{gather*}
    x_{1} x_{11} \ol x_{1} \ol x_{12},\,
    x_{11} x_{1} \ol x_{11} \ol x_{2},\,
    x_{2} x_{18} \ol x_{2} \ol x_{19},\,
    x_{19} x_{2} \ol x_{19} \ol x_{3},\,
    x_{3} x_{20} \ol x_{3} \ol x_{21},\,
    \\
    x_{3} x_{23} \ol x_{3} \ol x_{22},\,
    x_{22} x_{4} \ol x_{22} \ol x_{3},\,
    x_{21} x_{4} \ol x_{21} \ol x_{5},\,
    x_{5} x_{16} \ol x_{5} \ol x_{13},\,
    x_{13} x_{5} \ol x_{13} \ol x_{6},\,
    \\
    x_{11} x_{7} \ol x_{11} \ol x_{6},\,
    x_{7} x_{11} \ol x_{7} \ol x_{12},\,
    x_{13} x_{8} \ol x_{13} \ol x_{7},\,
    x_{8} x_{16} \ol x_{8} \ol x_{15},\,
    x_{21} x_{9} \ol x_{21} \ol x_{8},\,
    \\
    x_{22} x_{9} \ol x_{22} \ol x_{10},\,
    x_{10} x_{23} \ol x_{10} \ol x_{24},\,
    x_{10} x_{20} \ol x_{10} \ol x_{19},\,
    x_{19} x_{1} \ol x_{19} \ol x_{10},\,
    x_{1} x_{18} \ol x_{1} \ol x_{17},\,
    \\
    x_{15} x_{22} \ol x_{15} \ol x_{21},\,
    x_{22} x_{15} \ol x_{22} \ol x_{14},\,
    x_{24} x_{13} \ol x_{24} \ol x_{14},\,
    x_{13} x_{24} \ol x_{13} \ol x_{17},\,
    \\
    \ol x_{11} \ol x_{19} x_{22} \ol x_{21} \ol x_{13} x_{11} x_{13} x_{21} \ol x_{22} x_{19},\,
    \ol x_{1} x_{7},\,
    \ol x_{24} x_{22} x_{8} \ol x_{5}.
  \end{gather*}
  Indeed, the first 24 are the standard Wirtinger relators for the
  4-component link $L \sqcup \eta$ (thus one of these may be omitted),
  and the last 3 relators arise from the zero surgery performed
  along~$L$.  It is straightforward to read off the curve $\eta'$:
  \[
  \eta' = x_{1} \ol x_{2} x_{10} \ol x_{3} x_{15} x_{3} \ol x_{10} \ol x_{13}.
  \]

  We define a representation $\rho\colon \pi_1(Y\sm\eta) \to
  \SL(2,\Z_5)$ by mapping the above 24 generators, respectively, to:
  \begin{gather*}
    \begin{bmatrix} 0 & 4 \\ 1 & 3 \end{bmatrix},\,
    \begin{bmatrix} 4 & 0 \\ 1 & 4 \end{bmatrix},\,
    \begin{bmatrix} 0 & 4 \\ 1 & 3 \end{bmatrix},\,
    \begin{bmatrix} 4 & 4 \\ 0 & 4 \end{bmatrix},\,
    \begin{bmatrix} 2 & 1 \\ 1 & 1 \end{bmatrix},\,
    \begin{bmatrix} 4 & 0 \\ 1 & 4 \end{bmatrix},\,
    \\
    \begin{bmatrix} 0 & 4 \\ 1 & 3 \end{bmatrix},\,
    \begin{bmatrix} 4 & 0 \\ 4 & 4 \end{bmatrix},\,
    \begin{bmatrix} 1 & 1 \\ 1 & 2 \end{bmatrix},\,
    \begin{bmatrix} 1 & 1 \\ 1 & 2 \end{bmatrix},\,
    \begin{bmatrix} 4 & 1 \\ 4 & 0 \end{bmatrix},\,
    \begin{bmatrix} 3 & 1 \\ 2 & 1 \end{bmatrix},\,
    \\
    \begin{bmatrix} 4 & 3 \\ 2 & 3 \end{bmatrix},\,
    \begin{bmatrix} 1 & 3 \\ 0 & 1 \end{bmatrix},\,
    \begin{bmatrix} 2 & 2 \\ 2 & 0 \end{bmatrix},\,
    \begin{bmatrix} 4 & 2 \\ 3 & 3 \end{bmatrix},\,
    \begin{bmatrix} 0 & 1 \\ 4 & 2 \end{bmatrix},\,
    \begin{bmatrix} 0 & 1 \\ 4 & 2 \end{bmatrix},\,
    \\
    \begin{bmatrix} 1 & 1 \\ 0 & 1 \end{bmatrix},\,
    \begin{bmatrix} 3 & 4 \\ 4 & 4 \end{bmatrix},\,
    \begin{bmatrix} 2 & 1 \\ 4 & 0 \end{bmatrix},\,
    \begin{bmatrix} 4 & 4 \\ 4 & 3 \end{bmatrix},\,
    \begin{bmatrix} 0 & 4 \\ 1 & 2 \end{bmatrix},\,
    \begin{bmatrix} 1 & 0 \\ 1 & 1 \end{bmatrix}.
  \end{gather*}
  It can be verified that all the relators are sent to the identity,
  by a straightforward computation.  Also, we have that
  \[
  \rho(\eta') = \begin{bmatrix} 3 & 1 \\ 4 & 0 \end{bmatrix}
  \]
  is not the identity.  (We found the representation $\rho$ using a
  computer program that performs a brute force search in the
  representation variety.)  This completes the proof.
\end{proof}

\section{Grope concordance and amenable signatures}
\label{section:amenable-signature}

In this section we show that the links described in
Sections~\ref{subsection:satellite-configuration-of-iterated-Bing-doubles}
and~\ref{subsection:non-diffeomorphic-zero-surgery} are not height
$n+3$ grope concordant, by using amenable signature obstructions from~\cite{Cha:2012-1}.
  In fact the amenable signatures we use are
obstructions to being $n$-solvably cobordant, which is a relative
analogue for manifolds with boundary, or bordered manifolds, of the
notion of $n$-solvability of \cite{Cochran-Orr-Teichner:1999-1}.  For
our purpose it suffices to consider the case of link exteriors; an
\emph{$n$-solvable cobordism} between the exteriors $X$ and $X'$ of
two links with the same number of components is a 4-manifold $W$ with
$\partial W=X\cup_{\partial}-X'$ satisfying the conditions described
in~\cite[Definition~2.8]{Cha:2012-1}, where the boundary tori of $X$
and $X'$ are identified along the zero framing.  Since we do not use
the defining condition right now, instead of spelling it out
here, we begin with its relationship to grope concordance.  The
following theorem originates from
\cite[Theorem~8.11]{Cochran-Orr-Teichner:1999-1}, and was given in our
context in \cite{Cha:2012-1}.

\begin{theorem}[\cite{Cha:2012-1} Theorems 2.16 and
  2.13, and Remark 2.11]
  \label{theorem:grope-concordance-implies-being-solvably-cobordant}
  If two links are height $n+2$ grope concordant then their exteriors
  are $n$-solvably cobordant as bordered 3-manifolds.
\end{theorem}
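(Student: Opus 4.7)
The plan is to construct the $n$-solvable cobordism directly from the geometric data of the grope concordance, and then to verify the defining homological conditions. Let $G_1,\dots,G_m$ be disjointly embedded framed annular gropes of height $n+2$ in $S^3\times[0,1]$ with $\partial G_i$ realising the given height $n+2$ grope concordance between $L$ and $L'$.  Define
\[
W \;:=\; (S^3\times[0,1])\setminus \nu\Big(\bigcup_{i} G_i\Big),
\]
the exterior of a regular neighbourhood of the union of the gropes.  Since the $G_i$ are annular and framed, the boundary of $W$ decomposes naturally as $X_L \cup X_{L'}$ glued along $(\bigsqcup_m S^1\times S^1)\times[0,1]$, and the framing on $G_i$ identifies the bordering tori along the zero framing, as required for an $n$-solvable cobordism between bordered manifolds.

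Next I would verify the $H_1$-condition, namely that the inclusion $X_L\hookrightarrow W$ induces an isomorphism on $H_1$.  The meridians of the annular gropes are isotopic in $W$ to meridians of $L$ and $L'$, and a Mayer-Vietoris calculation using the decomposition $S^3\times[0,1] = W \cup \nu(\cup G_i)$, together with the fact that $H_*(\nu(G_i))$ and $H_*(\partial \nu(G_i))$ are computed from the (annular) surface stages, shows that $H_1(W)\cong\Z^m$ generated by the meridians.

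The main content is to exhibit the $n$-Lagrangian with $n$-duals for the intersection form on $H_2(W;\Z[\pi_1(W)/\pi_1(W)^{(n)}])$.  The key input is the standard fact that if a framed grope of height $k$ is embedded in a 4-manifold $V$, then each symplectic basis curve on its bottom-stage surface bounds a grope of height $k-1$ in $V$, and consequently is nullhomotopic in $V^{(n)}$ provided $k-1\ge n$.  Applying this to the $(n+2)$-stage grope $G_i$, the surface stages at level $j$ from the bottom (with caps removed, or rather pushed off into $W$) contribute surfaces to $H_2(W)$ whose $\pi_1$-image lies in $\pi_1(W)^{(j-1)}$.  Organising these surfaces by tracking the symplectic bases of the successive stages, one obtains a half-basis of $H_2(W)$ represented by surfaces in $\pi_1(W)^{(n)}$ (the Lagrangian) together with a dual half-basis also in $\pi_1(W)^{(n)}$ (the duals), both of which pair correctly via the geometric intersection structure of the grope.

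The main obstacle I anticipate is the bookkeeping required to show that the symplectic basis surfaces genuinely live in the $n$-th derived subgroup and that together they span $H_2(W)$ with the right pairing, because the gropes are annular rather than disc-like and because the intersection form needs to be computed with twisted coefficients.  This is controlled by an induction on the height $n$, using at each step the observation above about grope-bounded curves and derived series depth; the base of the induction ($n=0$) reduces to the statement that a height $2$ annular grope concordance yields a cobordism whose $H_2$ has a Lagrangian of surfaces, which is essentially the standard relationship between Seifert-like surfaces and $0$-solvability.  This inductive strategy is implemented in detail in~\cite{Cha:2012-1}, building on the knot-case treatment of~\cite{Cochran-Orr-Teichner:1999-1}.
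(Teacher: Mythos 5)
The paper does not prove this statement; it cites it verbatim from \cite{Cha:2012-1} (Theorems 2.16 and 2.13, together with Remark 2.11), which in turn builds on \cite[Theorem~8.11]{Cochran-Orr-Teichner:1999-1}. There is therefore no internal proof in this paper to compare against. Your proposal does reproduce the standard strategy used in those sources: take the exterior $W := (S^3\times I)\setminus\nu(\bigcup_i G_i)$, use the framing to identify the bordering tori along the zero framing, verify the $H_1$-condition by Mayer--Vietoris, and extract the $n$-Lagrangian and $n$-duals from pushed-off higher stages of the gropes.

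Two small corrections are in order. First, your bookkeeping formula is off: a symplectic basis curve on the stage-$j$ surface (counting from the bottom) of a height-$(n+2)$ grope bounds the attached subgrope of height $(n+2)-j$, and hence lies in $\pi_1(W)^{(n+2-j)}$, not $\pi_1(W)^{(j-1)}$; the two agree only when $j=(n+3)/2$. Your formula is what one gets if one counts levels from the \emph{top} of the grope downwards. Either way, the relevant generators of $H_2(W)$ come from the second-stage surfaces, whose symplectic basis curves bound height-$n$ subgropes and so lie in $\pi_1(W)^{(n)}$, which is precisely the depth required for an $n$-Lagrangian with $n$-duals, so the conclusion you state is correct. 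Second, the phrase ``with caps removed'' is a misnomer here: the gropes of Definition~\ref{Definition:grope-concordance} are symmetric (uncapped) gropes, so there are no caps; what you want is that the higher-stage surfaces are pushed off via the framing to produce disjointly embedded closed surfaces in $W$ representing the Lagrangian and dual classes. With these adjustments, your sketch matches the argument implemented in \cite{Cha:2012-1}.
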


As our key ingredient to detect non-solvably-cobordant
3-manifolds and therefore non-grope-concordant links, we will use the Amenable
Signature Theorem, which was first introduced
in~\cite{Cha-Orr:2009-01} for homology cobordism of closed 3-manifolds
and then generalised to $n$-solvable cobordisms of bordered
3-manifolds in~\cite{Cha:2012-1}.  We state a special case which will
be sufficient for our purpose.  For a closed 3-manifold $M$ and a
homomorphism $\phi\colon \pi_1(M)\to G$, denote the von Neumann-Cheeger-Gromov $\rho$-invariant by $\rhot(M,\phi) \in
\R$.  See e.g.\
\cite[Section~5]{Cochran-Orr-Teichner:1999-1} as well as
\cite{Chang-Weinberger:2003-1,Harvey:2006-1,Cha:2006-1,Cha-Orr:2009-01}
for definitions and useful properties of $\rhot(M,\phi)$.  Precise
references for the properties that we need will be recalled as we go
along.

\begin{theorem}[A special case of
  {\cite[Amenable Signature Theorem~3.2]{Cha:2012-1}}]
  \label{theorem:amenable-signature-for-solvable-cobordism}
  Suppose $W$ is an $(n+1)$-solvable cobordism between two bordered
  3-manifolds $X$ and~$X'$, and $G$ admits a subnormal series
  \[
  G = G_0 \supset G_1 \supset \cdots \supset G_n \supset G_{n+1}=\{e\}
  \]
  with each quotient $G_i/G_{i+1}$ torsion-free abelian.
  Then for any $\phi\colon \pi_1(X\cup_\partial -X') \to G$ which
  factors through $\pi_1(W)$, we have $\rhot(X\cup_\partial
  -X',\phi)=0$.
\end{theorem}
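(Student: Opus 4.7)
The plan is to follow the standard Atiyah/Cheeger--Gromov $L^2$-signature strategy, as used throughout the COT programme. Set $M := X \cup_\partial -X'$, a closed 3-manifold bounded by $W$. Since $\phi$ extends over $\pi_1(W)$ by hypothesis, the starting point is the $L^2$-signature defect formula
\[
\rhot(M,\phi) = \lsign_G(W,\phi) - \sign(W),
\]
where $\lsign_G(W,\phi)$ denotes the von Neumann signature of the twisted intersection form on $H_2(W;\N G)$ (see e.g.\ \cite{Cochran-Orr-Teichner:1999-1,Cha-Orr:2009-01}). It therefore suffices to show that both signatures on the right vanish.

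The vanishing $\sign(W)=0$ is immediate from the $(n+1)$-solvable structure: by definition, $H_2(W;\Z)$ admits a half-rank subgroup $\mathcal{L}_0$ --- the ``lower'' surfaces whose $\pi_1$-image lies in $\pi_1(W)^{(n+1)}$ --- on which the integer intersection form vanishes, and a half-rank isotropic subspace in a nonsingular symmetric $\Z$-valued form forces the signature to be zero.

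The main step is $\lsign_G(W,\phi)=0$. The key algebraic input is: if $G$ admits a subnormal series of length $n+1$ with abelian quotients, then induction using $[G_i,G_i]\subset G_{i+1}$ gives $G^{(i)}\subset G_i$, and in particular $G^{(n+1)}\subset G_{n+1}=\{e\}$. Hence $\phi$ kills $\pi_1(W)^{(n+1)}$, so the surfaces in $\mathcal{L}_0$ lift equivariantly to the $G$-cover of $W$, yielding a half-rank family $\mathcal{L}\subset H_2(W;\N G)$. For two lifted surfaces, each geometric intersection point contributes $1\in\N G$, so the twisted form on $\mathcal{L}$ reduces to the ordinary integer intersection form, which already vanishes. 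An $L^2$-Witt-type argument then promotes the existence of a half-rank isotropic subspace to the vanishing $\lsign_G(W,\phi)=0$.

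The main obstacle is justifying ``half-rank'' $L^2$-theoretically, particularly since $W$ has boundary. One must (i)~set up the twisted intersection form on the relative chain complex $C_*(W,\partial W;\N G)$ so that the defect formula holds for $4$-manifolds with boundary, and (ii)~show that the von Neumann dimension of $\mathcal{L}$ equals half that of the relative $H_2$, which is what allows the isotropy conclusion to be upgraded to signature vanishing. This is exactly where the hypothesis that the quotients $G_i/G_{i+1}$ are \emph{torsion-free} abelian is essential: it places $G$ in the class of amenable groups for which $\Z G$ embeds in an Ore skew field, so that $L^2$-dimension agrees with ordinary dimension over this skew field on the finitely presented modules arising from the $(n+1)$-solvable structure, validating the Witt-style vanishing argument. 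The relative setup and the dimension-matching results are precisely the content of the amenable signature machinery in \cite{Cha:2012-1}, of which the present statement is cited as a special case.
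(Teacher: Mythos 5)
The paper does not prove this statement: it is quoted directly as a cited theorem from \cite{Cha:2012-1}, and no argument is supplied in the present paper.  So there is no in-paper proof for your reconstruction to match or deviate from; I can only assess it against the argument in the cited reference.

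Your outline follows the standard Cochran--Orr--Teichner strategy, specialised to bordered manifolds and to the PTFA (poly-torsion-free-abelian) case: begin with the $L^2$-signature defect formula, kill $\sign(W)$ using the $(n+1)$-solvable Lagrangian, observe that the subnormal series forces $G^{(n+1)}=\{e\}$ so that $\phi$ factors through $\pi_1(W)/\pi_1(W)^{(n+1)}$, and use this to push the Lagrangian into $H_2(W;\N G)$.  That is the right skeleton, and your identification of the Ore/amenable input as the reason the torsion-free hypothesis matters is also right.

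One local step is stated incorrectly.  You argue that because the surfaces lift to the $G$-cover, ``each geometric intersection point contributes $1\in\N G$, so the twisted form on $\mathcal{L}$ reduces to the ordinary integer intersection form, which already vanishes.''  This is the wrong direction: for fixed lifts the $\N G$-pairing records intersections of one lift with \emph{all} $G$-translates of the other, and the $\Z$-valued intersection number is the augmentation of the $\N G$-pairing, not vice versa.  Vanishing of the integer form does not imply vanishing of the $\N G$-form.  The correct reasoning is that the $(n+1)$-solvable structure already gives vanishing of the intersection form on the $\ell_i$ over $\Z[\pi_1(W)/\pi_1(W)^{(n+1)}]$, and since $\phi$ factors through $\pi_1(W)/\pi_1(W)^{(n+1)}\to G$ this vanishing persists under the induced coefficient maps $\Z[\pi_1(W)/\pi_1(W)^{(n+1)}]\to\Z G\to\N G$.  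As you note, the genuinely hard content --- the $L^2$-dimension count that upgrades an isotropic submodule to a half-dimensional one, and the treatment of the radical of the intersection form coming from $\partial W$ --- is deferred to the machinery of \cite{Cha:2012-1} (or, in this PTFA special case, \cite{Cochran-Orr-Teichner:1999-1}), so the argument is a sketch rather than a proof.
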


Recall that in our construction of the links $L_j$ the knot $J_0^j$
was the connected sum of $N_j$ copies of Cochran-Teichner's knot,
say~$J$.  Now we proceed to specify the integers~$N_j$.  Denote, by
$\rhot(K):=\int_{S^1} \sigma_K(\omega) \,d\omega$, the integral of the
Levine-Tristram signature function over the circle normalized to
length~one.  For $M_K$ the zero surgery on $K$ and $\phi_0 \colon
\pi_1(M_K) \to \Z$ the abelianisation homomorphism, $\rhot(K)
=\rhot(M_K,\phi_0)$ by
\cite[Proposition~5.1]{Cochran-Orr-Teichner:2002-1}.  We have
$\rhot(J_0^j)=N_j\rhot(J) = 4N_j/3$ by additivity under connected sum
and \cite[Lemma~4.5]{Cochran-Teichner:2003-1}.  Due to Cheeger and
Gromov~\cite{Cheeger-Gromov:1985-1}, for any closed 3-manifold $Y$
there is a constant $C_Y > 0$ such that $|\rhot(Y,\psi)|< C_Y$ for
any~$\psi$.  From now on we abbreviate $\ell := n-k(m)$.  Define
\[
R:= C_{X_{L_0}\cup_\partial -X_{L_0}} + 2\sum_{i=0}^{\ell-1}
C_{M_{K_i}}.
\]
We choose the large integers $N_j$ inductively in such a way that
\[
N_j > 3R/4 + \max\{N_k\mid k<j\}.
\]
Then we have
\[
\rhot(J_0^j) > R + \rhot(J_0^k)
\]
whenever $j>k$.  For
Theorem~\ref{theorem:links-not-conc-hom-cobordant-0-surgeries}, we
make these choices so that the condition in the preamble to
Lemma~\ref{lemma:non-homeomorphic-zero-surgery} relating to the
Kneser-Haken bound is simultaneously satisfied.

Now we start the proof that our links $L_j$ are not height $n+3$ grope
concordant to one another.  Let $X$ and $X'$ be the exteriors of $L_j$
and $L_k$, respectively.  To distinguish them in the notation, we
denote the axis curve $\eta$ in $X$ by $\eta_j$, and we denote the
corresponding axis curve in $X'$ by $\eta_k$.

Recall that $m=|I|$ and that $k(m) = \lfloor \log_2(m-1) \rfloor$.  Also note that $k(m)+1 = \lceil \log_2(m) \rceil$.
By Theorem~\ref{theorem:grope-concordance-implies-being-solvably-cobordant}, it suffices to show the following:

\begin{proposition}
  \label{theorem:being-non-solvably-cobordant}
  For $n \geq k(m)$, the bordered 3-manifolds $X$ and $X'$ are not $(n+1)$-solvably
  cobordant when $j \ne k$.
\end{proposition}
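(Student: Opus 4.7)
Suppose, for a contradiction, that $W$ is an $(n+1)$-solvable cobordism between $X=X_{L_j}$ and $X'=X_{L_k}$, and write $M:=X\cup_\partial -X'$. The plan is to construct a coefficient homomorphism $\phi\colon \pi_1(M)\to G$ that factors through $\pi_1(W)$, with $G$ a torsion-free $(n+1)$-step poly-torsion-free-abelian group, so that Theorem~\ref{theorem:amenable-signature-for-solvable-cobordism} forces $\rhot(M,\phi)=0$. I would then compute $\rhot(M,\phi)$ directly and show that it cannot be zero.

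To construct $\phi$ I would follow the mixed-coefficient induction of~\cite{Cha:2012-1}, adapted to the satellite tower that produces $L_j$ and $L_k$. The manifold $M$ decomposes along a family of tori corresponding to the satellite axes: one pair of ``seed'' tori coming from $\eta$, intermediate pairs of tori coming from the Stevedore axes $\alpha_i$ (for $i=0,\ldots,\ell-1$ with $\ell=n-k(m)$), and one innermost gluing across which the exteriors of $J_0^j$ and $-J_0^k$ meet.  Starting from the abelianisation of the innermost piece and building outwards, I would extend $\phi$ across each Stevedore torus by using that $\alpha_i$ lies in the commutator subgroup of $\pi_1(X_{K_i})$, so that the representation gains one torsion-free abelian layer per stage. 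At the outermost layer I would invoke Lemma~\ref{lemma:properties-of-L_I-and-eta}\,(\ref{item:properties-of-L_I-eta-two}) (or its Borromean analogue in Section~\ref{subsection:non-diffeomorphic-zero-surgery}) to see that $\eta$ survives to depth $k(m)+1$ in a suitable derived-series quotient of $\pi_1(X_{L_0})$, supplying the remaining $k(m)+1$ layers for a total solvable length of exactly $n+1$. The factorisation through $\pi_1(W)$ is then arranged by the usual localisation trick: the $(n+1)$-solvability of $W$ guarantees that at each level one can project $\pi_1(W)^{(i)}/\pi_1(W)^{(i+1)}$ onto a torsion-free abelian quotient compatible with $\phi$.

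To compute $\rhot(M,\phi)$ I would apply $L^2$-additivity across the satellite tori. Each intermediate Stevedore piece is a copy of $M_{K_i}$, whose contribution to $|\rhot(M,\phi)|$ is bounded by the Cheeger-Gromov constant $C_{M_{K_i}}$ (counted twice, once on each side of $M$), while the seed piece $X_{L_0}\cup_\partial -X_{L_0}$ contributes at most $C_{X_{L_0}\cup_\partial -X_{L_0}}$. On the innermost piece, $\phi$ restricts on $\pi_1(M_{J_0^j})$ and $\pi_1(M_{J_0^k})$ to the abelianisation, so by~\cite[Proposition~5.1]{Cochran-Orr-Teichner:2002-1} that piece contributes exactly $\rhot(J_0^j)-\rhot(J_0^k)$. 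Altogether this yields
\[
\bigl|\,\rhot(M,\phi) - \bigl(\rhot(J_0^j)-\rhot(J_0^k)\bigr)\,\bigr|\;\le\; R.
\]
For $j>k$, the inductive choice of the $N_j$ gives $\rhot(J_0^j)-\rhot(J_0^k)>R$, so $\rhot(M,\phi)\neq 0$, contradicting the vanishing provided by the Amenable Signature Theorem.

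The main obstacle is producing a $\phi$ of exactly the prescribed solvable length, with torsion-free abelian quotients, that factors through $\pi_1(W)$ while still restricting non-trivially on each satellite stage, so that the additivity computation genuinely captures the full $\rhot(J_0^j)-\rhot(J_0^k)$ contribution from the bottom rather than collapsing to a shallower representation with a vacuous estimate. The depth statements for $\eta$ and for the Stevedore axes $\alpha_i$ proved in Section~\ref{section:construction-links-grope-concordance} are exactly what prevent this collapse, while the $(n+1)$-solvability of $W$ controls the extension over $W$ at every level.
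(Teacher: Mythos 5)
Your overall strategy is the right one, and tracks the paper's proof closely: construct a coefficient homomorphism to a poly-torsion-free-abelian group of solvable length $n{+}1$ that factors through $\pi_1(W)$, invoke Theorem~\ref{theorem:amenable-signature-for-solvable-cobordism} to force vanishing of $\rhot$, then compute $\rhot$ along the satellite tower and use the Cheeger--Gromov bounds together with the choice of $N_j$ to reach a contradiction. The paper realises your ``additivity across satellite tori'' by stacking the Cochran--Harvey--Leidy cobordisms into a 4-manifold $V$, attaching it to $W$ to form $W_0$, and taking $\phi_0$ to be the quotient onto $\pi_1(W_0)/\cP^{n+1}\pi_1(W_0)$; $V$ contributes nothing to the $L^2$-signature defect, which gives identity~(1) in the proof. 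That part of your outline is essentially the same argument.

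Where your sketch has a genuine gap is the non-collapse step, which you flag as ``the main obstacle'' but do not actually resolve. You claim to obtain the nontriviality of $\eta$ ``to depth $k(m)+1$ in a suitable derived-series quotient of $\pi_1(X_{L_0})$,'' but that is the wrong group: what is needed is that $\eta_j$ remains nontrivial in $\pi_1(W)/\cP^{k(m)+1}\pi_1(W)$, so that the meridian of $J_0^j$ lands in a torsion-free abelian normal subgroup of $G$ with infinite cyclic image, which is what licenses $L^2$-induction and the identification $\rhot(M_{J_0^j},\phi_0)=\rhot(J_0^j)$. Passing from $X$ (or $X_{L_0}$) to $W$ is not automatic, and ``the $(n+1)$-solvability of $W$ controls the extension'' is exactly the assertion that needs proof. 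The paper supplies the missing mechanism in Lemma~\ref{lemma:dwyer-for-solvable-cobordism}: the defining $H_2$ condition of an $n$-solvable cobordism feeds into Dwyer's theorem to give isomorphisms $\pi_1(X)/\pi_1(X)_q \cong \pi_1(W)/\pi_1(W)_q$ for $q\le 2^n+1$. Combined with Lemma~\ref{lemma:properties-of-L_I-and-eta}\,(\ref{item:properties-of-L_I-eta-two}) this puts $\eta_j$ nontrivially into $\pi_1(W)/\pi_1(W)_m$; then the vanishing of low-length Milnor invariants (via Milnor's theorem, giving $\pi_1(X)/\pi_1(X)_m\cong F/F_m$) shows $\pi_1(W)/\pi_1(W)_m$ is torsion-free, which is what allows the projection $\pi_1(W)/\pi_1(W)^{(k(m)+1)}\to\pi_1(W)/\pi_1(W)_m$ to factor through the \emph{rational} derived quotient $\pi_1(W)/\cP^{k(m)+1}\pi_1(W)$. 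Without this chain --- Dwyer for $n$-solvable cobordisms, then the torsion-freeness argument from the Milnor invariants --- the computation could collapse to a shallower representation as you yourself worry, and the estimate becomes vacuous. You should add this ingredient explicitly.
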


By Theorem~\ref{theorem:grope-concordance-implies-being-solvably-cobordant},
it then follows that our links $L_j$ and $L_k$ are not height $n+3$ grope
concordant when $j \ne k$.

\begin{proof}
  The proof proceeds almost identically to that of
  \cite[Theorem~4.8]{Cha:2012-1}, which combines the Amenable
  Signature Theorem of \cite{Cha:2012-1} with a higher order
  Blanchfield duality argument for a certain 4-dimensional cobordism
  introduced in~\cite{Cochran-Harvey-Leidy:2009-1} (see our $W_0$
  below).  So we will give an outline for our case and discuss
  differences from \cite[Theorem~4.8]{Cha:2012-1}.

  Suppose $W$ is an $(n+1)$-solvable cobordism with $\partial W =
  X\cup_{\partial} -X'$.  Similarly to \cite[Section 4.3]{Cha:2012-1}
  (see the paragraph entitled ``Cobordism associated to an iterated
  satellite construction''), we consider a cobordism $V$ with
  \begin{multline*}
    \partial V = M_{J_0^j} \sqcup -M_{J_0^k} \sqcup M_{K_0} \sqcup
    -M'_{K_0} \sqcup \cdots \sqcup M_{K_{\ell-1}} \sqcup
    -M'_{K_{\ell-1}} \\
    \sqcup (X_{L_0}\cup_\partial -X_{L_0}) \sqcup -(X
    \cup_\partial-X')
  \end{multline*}
  which is built by stacking cobordisms associated to satellite
  constructions~\cite[p.~1429]{Cochran-Harvey-Leidy:2009-1}, where
  $M'_{K_i}$ is a copy of $M_{K_i}$, and then construct a cobordism
  $W_0$ with
  \[
  \partial W_0 = M_{J_0^j} \sqcup -M_{J_0^k} \sqcup M_{K_0} \sqcup -M'_{K_0}
  \sqcup \cdots \sqcup M_{K_{\ell-1}} \sqcup -M'_{K_{\ell-1}} \sqcup (X_{L_0}\cup_\partial
  -X_{L_0})
  \]
  by attaching $V$ to $W$ along $X\cup_\partial -X'$.  We omit the
  detailed construction of $V$ and $W_0$ but state a couple of useful
  facts which can be verified as in \cite[Section~4.3]{Cha:2012-1}.
  Let $\{\cP^r G\}$ be the rational derived series of a group~$G$,
  i.e.\ $\cP^0 G := G$ and $\cP^{r+1} G$ is the kernel of $\cP^r G \to
  H_1(\cP^r G;\Q)$.  Let $\phi_0$ be the quotient map $\pi_1(W_0) \to
  G:=\pi_1(W_0)/\cP^{n+1}\pi_1(W_0)$.  Also we denote by $\phi_0$ the
  restrictions of $\phi_0$ to the components of $\partial W_0$ and to
  $W\subset W_0$, as an abuse of notation.  Then we have the following
  facts.

  \begin{enumerate}
  \item \label{item-facts-1} $\displaystyle \rhot(M_{J_0^j},\phi_0) -
    \rhot(M_{J_0^k},\phi_0) + \rhot(X_{L_0}\cupover{\partial}
    -X_{L_0},\phi_0) \\ \hbox{}\hfill+ \sum_{i=0\mathstrut}^{\ell-1}
    \rhot(M_{K_i},\phi_0) - \sum_{i=0\mathstrut}^{\ell-1}
    \rhot(M'_{K_i},\phi_0) = \rhot(X\cupover{\partial}
    -X',\phi_0).$
   \item \label{item-facts-2} The image of the meridian of $J_0^j$ in
    $M_{J_0^j}\subset \partial W_0$ under $\phi_0$ is a nontrivial
    element in the torsion-free abelian subgroup
    $\cP^n\pi_1(W)/\cP^{n+1}\pi_1(W)$ of~$G$.  Similarly for $k$
    instead of~$j$.
  \end{enumerate}
  The proof of (\ref{item-facts-1}) is completely identical to that given in \cite[Section
  4.3]{Cha:2012-1} (see the paragraphs entitled ``Cobordism associated
  to an iterated satellite construction'' and ``applications of
  Amenable Signature Theorem''): briefly, the $\rhot$-invariant of
  $\partial W_0$, which is the left hand side of (\ref{item-facts-1}), is equal to the
  $L^2$-signature defect of $W_0=V\cup_{X\cup_\partial -X'}W$ (this is
  a standard fact from index theory, or can be taken as the definition
  of $\rhot$).  It turns out that $V$ has no contribution to the
  $L^2$-signature defect by
  \cite[Lemma~2.4]{Cochran-Harvey-Leidy:2009-1}.  So the left hand side of
  (\ref{item-facts-1}) is equal to the $L^2$-signature defect of $W$, which is the
  $\rhot$-invariant of $\partial W$, namely the right hand side of~(\ref{item-facts-1}).

  The proof of $($\ref{item-facts-2}$)$ is almost identical to that
  given in \cite[Theorem~4.10]{Cha:2012-1}.  Only the following change
  is required: in the initial step of the inductive argument in
  \cite[Theorem~4.10]{Cha:2012-1}, it was shown that the image of (a
  parallel copy of) $\eta \subset X \subset \partial W$ is nontrivial
  under the quotient map $\pi_1(W)\to \pi_1(W)/\cP^2 \pi_1(W)$ (see
  the fourth paragraph of \cite[Proof of Theorem~4.10]{Cha:2012-1}) using
  a Blanchfield duality argument.

  In our case, instead we use
  Lemma~\ref{lemma:dwyer-for-solvable-cobordism} below, which is a
  generalisation of \cite[Lemma~3.5]{Cha-Friedl-Powell:2012-1}, to
  show that the image of $\eta$ is nontrivial in
  $\pi_1(W)/\cP^{(k(m)+1)} \pi_1(W)$.  The argument used in
  Lemma~\ref{lemma:dwyer-for-solvable-cobordism} is essentially an
  application Dwyer's theorem.

  \begin{lemma}
    \label{lemma:dwyer-for-solvable-cobordism}
    If $W$ is an $n$-solvable cobordism between two link exteriors (or
    more generally bordered 3-manifolds) $X$ and $X'$, then the
    inclusions induce isomorphisms
  \[
  \pi_1(X)/\pi_1(X)_{q}
  \cong \pi_1(W)/\pi_1(W)_{q} \cong \pi_1(X')/\pi_1(X')_{q}.
  \]
  for $q \leq 2^n+1$.
  \end{lemma}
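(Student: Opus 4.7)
The plan is to apply Dwyer's theorem, which refines Stallings' theorem by giving necessary and sufficient conditions for a map to induce isomorphisms on nilpotent quotients. Dwyer's theorem says the following: if $f\colon A\to B$ induces an isomorphism on $H_1(-;\Z)$, then $f$ induces an isomorphism $\pi_1(A)/\pi_1(A)_q \toiso \pi_1(B)/\pi_1(B)_q$ provided that the kernel of $f_*\colon H_2(A;\Z) \to H_2(B;\Z)$ lies in the Dwyer subspace $\phi_q(A) \subseteq H_2(A;\Z)$, namely the classes representable by maps from closed oriented surfaces $\Sigma \to A$ for which $\pi_1(\Sigma) \to \pi_1(A)$ factors through $\pi_1(A)_{q-1}$.

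First, the inclusions $X \hookrightarrow W \hookleftarrow X'$ induce isomorphisms on $H_1(-;\Z)$; this is part of the definition of an $n$-solvable cobordism recalled in \cite[Definition~2.8]{Cha:2012-1} and provides the $H_1$-hypothesis of Dwyer's theorem. So the task reduces to controlling $\ker\bigl(H_2(X;\Z) \to H_2(W;\Z)\bigr)$ and the analogous kernel for $X'$.

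The main work is to verify the Dwyer hypothesis in the range $q \le 2^n+1$. By the definition of $n$-solvability, $W$ carries a Lagrangian-with-duals decomposition of (a suitable quotient of) its second homology represented by embedded surfaces $L_i,D_i \subset W$ whose fundamental groups map into $\pi_1(W)^{(n)}$, the $n$th term of the derived series. The elementary inclusion $G^{(n)} \subseteq G_{2^n}$, proved by induction from $[G_a,G_b] \subseteq G_{a+b}$, places these surfaces into $\phi_q(W)$ for $q \le 2^n+1$. Combining this Lagrangian data with the long exact sequence of the pair $(W,X)$ and Poincar\'e--Lefschetz duality on the cobordism, any class in $H_2(X;\Z)$ that dies in $H_2(W;\Z)$ can be represented (up to modification within its Dwyer class) by a surface whose $\pi_1$-image lies in $\pi_1(X)_{q-1}$, so $\ker\bigl(H_2(X;\Z) \to H_2(W;\Z)\bigr) \subseteq \phi_q(X)$ in this range. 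Dwyer's theorem then supplies the isomorphism $\pi_1(X)/\pi_1(X)_q \toiso \pi_1(W)/\pi_1(W)_q$, and the identical argument applied to $X' \hookrightarrow W$ completes the chain.

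The principal obstacle is the kernel identification: one must transfer the derived-series Lagrangian data defining $n$-solvability into lower-central-series data suitable for Dwyer's criterion, while handling the relative (bordered) setup rather than the closed 3-manifold case originally treated by Cochran--Orr--Teichner. The arithmetic factor of $2^n$ relating derived depth to central depth is exactly what produces the stated range $q \le 2^n+1$; this generalises the low-depth bordered case handled in \cite[Lemma~3.5]{Cha-Friedl-Powell:2012-1}.
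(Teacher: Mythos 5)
Your statement of Dwyer's theorem is incorrect, and this is a genuine gap. The standard criterion (and the one the paper uses) is a \emph{surjectivity} condition: if $f\colon A\to B$ induces an isomorphism on $H_1$ and an \emph{epimorphism} $H_2(A;\Z)\to H_2(B;\Z)/\Phi_q(B)$, where $\Phi_q(B)$ is the Dwyer subgroup (the image of $H_2(B;\Z[\pi_1(B)/\pi_1(B)_q])\to H_2(B;\Z)$), then $f$ induces isomorphisms on the corresponding nilpotent quotients. A condition of the form $\ker(H_2(A)\to H_2(B))\subseteq \Phi_q(A)$ by itself does not suffice: for instance, the quotient $F_g\to\pi_1(\Sigma_g)$ from a free group onto a surface group is an $H_1$-isomorphism and has $H_2(F_g)=0$, so the kernel condition is vacuously true, yet the lower central quotients differ already at $q=3$. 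So even if you could establish your kernel claim, it would not deliver the conclusion.

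Moreover, the step where you claim to establish the kernel containment --- ``combining this Lagrangian data with the long exact sequence of the pair $(W,X)$ and Poincar\'e--Lefschetz duality $\ldots$ any class in $H_2(X;\Z)$ that dies in $H_2(W;\Z)$ can be represented $\ldots$ by a surface whose $\pi_1$-image lies in $\pi_1(X)_{q-1}$'' --- is not actually an argument; no mechanism is given for pushing Dwyer classes from $W$ back into $X$, and this is not how the proof goes. The point you are missing is that with the correct (surjectivity) form of Dwyer's theorem, no such transfer is needed and the hypothesis is satisfied \emph{for free}: by \cite[Definition~2.8]{Cha:2012-1}, the $n$-solvability of $W$ literally says that $H_2(W;\Z)$ is generated by the images of classes $\ell_i,d_j\in H_2(W;\Z[\pi_1(W)/\pi_1(W)^{(n)}])$, and since $\pi_1(W)^{(n)}\subseteq\pi_1(W)_{2^n}$ these images lie in the Dwyer subgroup. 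Hence $H_2(W;\Z)/\Phi_q(W)=0$ for $q\leq 2^n$, and the required epimorphism from $H_2(X;\Z)$ is automatic. One then concludes the isomorphism on the graded pieces $\pi_1(X)_q/\pi_1(X)_{q+1}\cong\pi_1(W)_q/\pi_1(W)_{q+1}$ for $q\leq 2^n$ and bootstraps to the quotients $\pi_1(X)/\pi_1(X)_q$ for $q\leq 2^n+1$ via the five lemma --- a step your proposal omits. Your observation $G^{(n)}\subseteq G_{2^n}$ is correct and is indeed the source of the bound $2^n+1$, but the rest of your argument needs to be replaced by the much shorter route above.
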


  \begin{proof}
    Recall Dwyer's theorem~\cite{Dwyer:1975-1}: if $f\colon X\to Y$
    induces an isomorphism $H_1(X;\Z)\cong H_1(Y;\Z)$ and an epimorphism
    \[
    H_2(X;\Z)\to H_2(Y;\Z)/\Im \{ H_2(Y;\Z[\pi_1(W)/\pi_1(W)_q]) \rightarrow
    H_2(Y;\Z)\},
    \]
    then $f$ induces an isomorphism $\pi_1(X)_q/\pi_1(X)_{q+1} \cong
    \pi_1(Y)_q/\pi_1(Y)_{q+1}$.

    In our case, by the definition of an $n$-solvable cobordism
    \cite[Definition~2.8]{Cha:2012-1}, we have $H_1(X;\Z)\cong
    H_1(W;\Z)\cong H_1(X';\Z)$, where the isomorphisms are induced by
    the inclusion maps.  Also, by \cite[Definition~2.8]{Cha:2012-1}
    there are elements $\ell_1,\ldots,\ell_r$, $d_1,\ldots,d_r$ lying
    in $H_2(W;\Z[\pi_1(W)/\pi_1(W)^{(n)}])$ such that the images of
    $\ell_i$ and $d_j$ generate~$H_2(W;\Z)$.  Since $\pi_1(W)^{(n)}$
    is contained in $\pi_1(W)_{2^{n}}$, the $H_2$ condition of Dwyer's
    theorem is satisfied.  Therefore it follows that
    \[
    \pi_1(X)_q/\pi_1(X)_{q+1} \cong
    \pi_1(W)_q/\pi_1(W)_{q+1} \cong
    \pi_1(X')_q/\pi_1(X')_{q+1}
    \]
    for $q \leq 2^n$ by Dwyer's theorem.
    From this the desired conclusion follows by the five
    lemma.
  \end{proof}

  Recall that $\eta\subset X$ represents a nontrivial element in
  $\pi_1(X)/\pi_1(X)_{m}$ by
  Lemma~\ref{lemma:properties-of-L_I-and-eta}~(\ref{item:properties-of-L_I-eta-two}).
  Since the above isomorphisms preserve longitudes (and meridians),
  $\eta_j \subset X$ represents a nontrivial element in
  $\pi_1(W)/\pi_1(W)_{m}$.  Since $L_j$ has vanishing Milnor
  invariants of length less than $|I|=m$, we have
  $\pi_1(X)/\pi_1(X)_{m} \cong F/F_{m}$, where $F$ is the free group
  with rank $m$, by \cite[Theorem~4]{Milnor:1957-1}.  Consequently
  $\pi_1(W)/\pi_1(W)_{m}$ is torsion free.

  We note that for any group $\pi$, we have $\pi^{(k(q)+1)} =
  \pi^{(\lceil \log_2(q) \rceil)} \subseteq \pi_q$.  Therefore there
  is a quotient map $\pi_1(W)/\pi_1(W)^{(k(m)+1)} \to
  \pi_1(W)/\pi_1(W)_{m}$, and this map factors through
  $\pi_1(W)/\cP^{(k(m)+1)}\pi_1(W)$ by the definition
  of~$\cP^{(k(m)+1)}$ and the fact that the codomain is torsion free.
  Since $\eta_j$ is nontrivial in $\pi_1(W)/\pi_1(W)_{m}$, $\eta_j$ is
  also nontrivial in $\pi_1(W) / \cP^{(k(m)+1)}\pi_1(W)$.  By
  replacing $j$ with $k$ and $X$ with $X'$ we obtain the corresponding
  fact for $\eta_k$ in $X'$.

  To complete the proof of
  Proposition~\ref{theorem:being-non-solvably-cobordant}, we proceed as in
  \cite[Section 4.3]{Cha:2012-1}.  Observe that for the normal
  subgroups $G_i := \cP^i \pi_1(W_0)/\cP^{n+1}\pi_1(W_0)$ of our $G$,
  the quotient $G_i/G_{i+1}$ is torsion-free abelian.  So by Amenable
  Signature
  Theorem~\ref{theorem:amenable-signature-for-solvable-cobordism} we
  have $\rhot(X\cup_\partial -X')=0$.  Since the curve $\eta_j$
  represents a nontrivial element in a torsion-free abelian normal
  subgroup of $G$, the image of
  $\pi_1(M_{J_0^j})$ in $G$ under $\phi_0$ is the infinite cyclic
  group.  By $L^2$-induction (see e.g.\
  \cite[page~8~(2.3)]{Cheeger-Gromov:1985-1},
  \cite[Proposition~5.13]{Cochran-Orr-Teichner:1999-1}) and
  \cite[Proposition~5.1]{Cochran-Orr-Teichner:2002-1}, we have
  $\rhot(M_{J_0^j},\phi_0) = \rhot(J_0^j)$, and similarly for~$J_0^k$.
  Now combining these two facts with $($\ref{item-facts-1}$)$ we
  obtain:
  \begin{multline*}
    \rhot(J_0^j) - \rhot(J_0^k) + \rhot(X_{L_0}\cupover{\partial}
    -X_{L_0},\phi_0)
    \\ + \sum_{i=0\mathstrut}^{\ell-1}
    \rhot(M_{K_i},\phi_0) - \sum_{i=0\mathstrut}^{\ell-1} \rhot(M'_{K_i},\phi_0) = 0.
    \tag{$3$}
  \end{multline*}
  Recall that
  \begin{multline*}
    \bigg|\rhot(X_{L_0}\cupover{\partial} -X_{L_0},\phi_0) +
    \sum_{i=0\mathstrut}^{\ell-1} \rhot(M_{K_i},\phi_0) -
    \sum_{i=0\mathstrut}^{\ell-1} \rhot(M'_{K_i},\phi_0) \bigg|
    \\[-2ex]
    < R:= C_{X_{L_0}\cup_\partial -X_{L_0}} + 2\sum_{i=0}^{\ell-1}
    C_{M_{K_i}},
  \end{multline*}
  and in the preamble to
  Proposition~\ref{theorem:being-non-solvably-cobordant}, we chose
  $N_j$ in such a way that
  \[
  \big| \rhot(J_0^k) - \rhot(J_0^j) \big| > R
  \]
  whenever $k \ne j$.  Therefore $(3)$ implies that $j=k$.  Thus the
  existence of the $(n+1)$-solvable cobordism $W$ implies that $j=k$,
  which is the contrapositive of the desired statement.
\end{proof}

\def\MR#1{}
\bibliographystyle{alphaabbrv}
\bibliography{research}

\end{document}